\pgfplotsset{compat=newest}
\theoremstyle{plain} 
\newtheorem{theorem}{Theorem} 
\newtheorem{lemma}[theorem]{Lemma} 
\newtheorem{corollary}[theorem]{Corollary} 
\theoremstyle{definition} 
\newtheorem*{definition}{Definition}
\theoremstyle{remark} 
\newtheorem*{remark}{Remark} 
\newtheorem*{question}{Question}
\DeclareMathOperator{\mre}{Re}
\begin{document} 

\title[Norms of composition operators on the $H^2$ space of Dirichlet series]{Norms of composition operators on the \\ $H^2$ space of Dirichlet series} 
\date{\today} 

\author{Ole Fredrik Brevig} 
\address{Department of Mathematical Sciences, Norwegian University of Science and Technology (NTNU), NO-7491 Trondheim, Norway} 
\email{ole.brevig@math.ntnu.no} 
\author{Karl-Mikael Perfekt} 
\address{Department of Mathematics and Statistics, University of Reading, Reading RG6 6AX, United Kingdom} 
\email{k.perfekt@reading.ac.uk} 
\begin{abstract}
	We consider composition operators $\mathscr{C}_\varphi$ on the Hardy space of Dirichlet series $\mathscr{H}^2$, generated by Dirichlet series symbols $\varphi$. We prove two different subordination principles for such operators. One concerns affine symbols only, and is based on an arithmetical condition on the coefficients of $\varphi$. The other concerns general symbols, and is based on a geometrical condition on the boundary values of $\varphi$. Both principles are strict, in the sense that they characterize the composition operators of maximal norm generated by symbols having given mapping properties. In particular, we generalize a result of J.~H.~Shapiro on the norm of composition operators on the classical Hardy space of the unit disc. Based on our techniques, we also improve the recently established upper and lower norm bounds in the special case that $\varphi(s) = c + r2^{-s}$. A number of other examples are given.
\end{abstract}
\subjclass[2010]{Primary 47B33. Secondary 47B32, 30B50.}

\maketitle

\section{Introduction} \label{sec:intro} 
In a seminal paper, Gordon and Hedenmalm \cite{GH99} obtained a characterization of the bounded composition operators on the Hardy space of Dirichlet series $\mathscr{H}^2$. A Dirichlet series $f(s)=\sum_{n\geq1} a_n n^{-s}$ belongs to $\mathscr{H}^2$ if
\[\|f\|_{\mathscr{H}^2}^2 := \sum_{n=1}^\infty |a_n|^2<\infty.\]
The present paper is devoted to investigating the norms of composition operators on $\mathscr{H}^2$, in relation to certain mapping properties of the generating symbols.

By the Cauchy--Schwarz inequality, note that $\mathscr{H}^2$ is a space of analytic functions in the half-plane $\mathbb{C}_{1/2}$, where
\[\mathbb{C}_\theta := \{s \in \mathbb{C}\,:\, \mre{s}>\theta\}.\]
Hence, if $\varphi$ is an analytic function mapping $\mathbb{C}_{1/2}$ into itself, then $\mathscr{C}_\varphi f := f\circ \varphi$ defines an analytic function in $\mathbb{C}_{1/2}$, for every $f$ in $\mathscr{H}^2$.

However, the symbol $\varphi$ has to satisfy additional arithmetical conditions to ensure that $f \circ \varphi$ is a Dirichlet series, and further mapping properties are required for $f\circ\varphi$ to have square summable coefficients for every $f$ in $\mathscr{H}^2$. The main result\footnote{The statement on uniform convergence is from \cite[Thm.~3.1]{QS15}.} of \cite{GH99} shows that $\mathscr{C}_\varphi$ defines a bounded composition operator on $\mathscr{H}^2$ if and only if $\varphi$ belongs to the Gordon--Hedenmalm class $\mathscr{G}$. 
\begin{definition}
	The Gordon--Hedenmalm class, denoted $\mathscr{G}$, consists of the functions $\varphi\colon \mathbb{C}_{1/2}\to\mathbb{C}_{1/2}$ of the form
	\[\varphi(s) = c_0 s + \sum_{n=1}^\infty c_n n^{-s} =: c_0s + \varphi_0(s),\]
	where $c_0$ is a non-negative integer and the Dirichlet series $\varphi_0$ converges uniformly in $\mathbb{C}_\varepsilon$ for every $\varepsilon>0$, satisfying the following mapping properties: 
	\begin{itemize}
		\item[(a)] If $c_0=0$, then $\varphi_0(\mathbb{C}_0)\subseteq \mathbb{C}_{1/2}$. 
		\item[(b)] If $c_0\geq1$, then either $\varphi_0\equiv 0$ or $\varphi_0(\mathbb{C}_0)\subseteq \mathbb{C}_0$. 
	\end{itemize}
\end{definition}
That $\varphi$ is defined on $\mathbb{C}_0$ in the characterization is initially surprising, and proving the necessity of this is perhaps the most difficult aspect of \cite{GH99}. That this requirement is not unreasonable can be understood in view of Carlson's formula 
\begin{equation}\label{eq:carlson} 
	\sum_{n=1}^\infty |a_n|^2 n^{-2\sigma} = \lim_{T\to\infty} \frac{1}{2T}\int_{-T}^T |f(\sigma+it)|^2\,dt, 
\end{equation}
which is valid when $f$ converges uniformly for $\mre{s}\geq\sigma$. In particular, if the Dirichlet series $f$ converges uniformly for $\mre{s}\geq0$, then we may choose $\sigma = 0$ to express the $\mathscr{H}^2$-norm as an $L^2$-average of the boundary values $f(it)$ of $f$. In general, elements $f \in \mathscr{H}^2$ only converge in $\mathbb{C}_{1/2}$, and there are certainly no boundary values with respect to $\mathbb{C}_{0}$. However, there is a very useful notion of generalized boundary values via vertical limit functions, discussed in Section~\ref{sec:vertical}.

Recall from \cite{GH99} that composition operators $\mathscr{C}_\varphi$ generated by symbols with $c_0\geq1$ always satisfy that $\|\mathscr{C}_\varphi\|=1$. Since we are interested in non-trivial norm estimates, we shall exclusively consider the case $c_0=0$, when the symbol $\varphi$ is a Dirichlet series. The mapping properties of $\varphi$ that we will refer to consist of the point $\omega = \varphi(+\infty)$ and the domain $\Omega = \varphi(\mathbb{C}_0^\ast)$, where $\mathbb{C}_0^\ast := \mathbb{C}_0 \cup \{+\infty\}$. 

We will prove two different kinds of subordination principles. If $\varphi$ and $\psi$ have the same mapping properties, we will say that $\mathscr{C}_\varphi$ is subordinate to $\mathscr{C}_\psi$ whenever it holds that
\[\|\mathscr{C}_\varphi f\|_{\mathscr{H}^2}\leq \|\mathscr{C}_\psi f\|_{\mathscr{H}^2}\]
for every $f \in \mathscr{H}^2$.

In the first part of the paper, we will consider affine symbols. These are symbols of the form 
\begin{equation}\label{eq:affinesymbol} 
	\varphi_\mathbf{c}(s) = c + \sum_{j=1}^d c_j p_j^{-s}, 
\end{equation}
where $\mathbf{c} = (c_1,\ldots, c_d)$ and $(p_j)_{j\geq1}$ denotes the increasing sequence of prime numbers. In this case, the compactness of $\mathscr{C}_{\varphi_{\mathbf{c}}}$ has previously been studied in \cite{Bayart03,FQV04,QS15}. Note that $\varphi_{\mathbf{c}}$ is in $\mathscr{G}$ if and only if $\mre{c}>1/2$ and $\mre{c}-1/2 \geq \sum_{j\geq1} |c_j| =: r$. For an affine symbol, we see that $c = \varphi_{\mathbf{c}}(+\infty)$ and Kronecker's theorem implies that $\varphi_{\mathbf{c}}(\mathbb{C}_0^\ast) = \mathbb{D}(c,r)$, where
\[\mathbb{D}(c,r) := \left\{s \in \mathbb{C}\,: |s-c|<r\right\},\]
see Lemma~\ref{lem:affinemap}.

In terms of the mapping properties and the norm of $\mathscr{C}_{\varphi_{\mathbf{c}}}f$, we can without loss of generality assume that $c_j\geq0$ for $1 \leq j \leq d$. Suppose that $\mathbf{b} = (b_1, \ldots, b_d)$ is another vector with non-negative elements and $\sum_{j\geq 1} b_j = r$. Then, if $\mathbf{c}$ majorizes $\mathbf{b}$, $\mathbf{b} \prec \mathbf{c}$, we will prove in Theorem~\ref{thm:affinesubord} that $\mathscr{C}_{\varphi_{\mathbf{b}}}$ is subordinate to $\mathscr{C}_{\varphi_{\mathbf{c}}}$, and moreover that the following are equivalent: 
\begin{enumerate}
	\item[(a)] $\mathbf{b}$ is a permutation of $\mathbf{c}$. 
	\item[(b)] $\|\mathscr{C}_{\varphi_\mathbf{b}} f\|_{\mathscr{H}^2} = \|\mathscr{C}_{\varphi_\mathbf{c}} f\|_{\mathscr{H}^2}$ for every $f \in \mathscr{H}^2$. 
	\item[(c)] $\|\mathscr{C}_{\varphi_\mathbf{b}}\| = \|\mathscr{C}_{\varphi_\mathbf{c}}\|$. 
\end{enumerate}

In particular, the symbols $\varphi(s) = c + r p_j^{-s}$ generate composition operators of strictly maximal norm in the class of affine symbols with the same mapping properties. Muthukumar, Ponnusamy and Queff\'elec \cite{MPQ18} have recently investigated the norm of these operators. It is of course sufficient to only consider the case $\varphi(s) = c + r 2^{-s}$. They established the estimates 
\begin{equation}\label{eq:mpqbounds} 
	\zeta(2\mre{c}) \leq \|\mathscr{C}_\varphi\|^2 \leq \zeta(1+\xi) 
\end{equation}
where $\xi := (\mre{c}-1/2) + \sqrt{(\mre{c}-1/2)^2-r^2}$. The lower bound in \eqref{eq:mpqbounds} is actually a general lower bound which holds for any Dirichlet series $\varphi \in \mathscr{G}$, 
\begin{equation}\label{eq:genlower} 
	\|\mathscr{C}_\varphi\|^2 \geq \zeta(2\mre{c_1}), 
\end{equation}
see \cite{GH99}.

The full statement of Theorem~\ref{thm:affinesubord} is a bit more precise. As a corollary, we will see that the the lower bound \eqref{eq:genlower} is the best possible, even when considering only affine symbols with $\varphi(\mathbb{C}_0^\ast)=\mathbb{D}(c,r)$. On the other hand, we shall prove in Theorem~\ref{thm:pointeval} that the lower bound is attained if and only if $\varphi(s) \equiv c_1$. 

In certain cases, we will also improve the estimates \eqref{eq:mpqbounds} for $\varphi(s)=c+r2^{-s}$. In Theorem~\ref{thm:adjointlower} we obtain the new lower bound $\|\mathscr{C}_\varphi\|^2 \geq \xi^{-1}$. This constitutes a major improvement when $\mre{c}-1/2$ is small; the difference between the upper and lower estimates is now bounded (by $1$), whereas it was previously unbounded. In Theorem~\ref{thm:newupper}, we will combine our techniques with a result from \cite{Brevig17} to improve the upper estimate in \eqref{eq:mpqbounds}, showing that
\[\|\mathscr{C}_\varphi\|^2 \leq \frac{\zeta(1+2\xi)+\zeta(1+\xi)}{2}\]
when $\mre{c}-1/2=r\geq \alpha_0$, for a specific value $\alpha_0\approx 1.5$.

In the second half of the paper, we turn our attention to general Dirichlet series symbols $\varphi \in \mathscr{G}$. Consider a simply connected domain $\Omega \subseteq \mathbb{C}_{1/2}$ with Jordan curve boundary on the Riemann sphere, fix $\omega \in \Omega$, and let $\psi(s)=\Theta(2^{-s})$, where $\Theta$ is a Riemann map from $\mathbb{D}$ to $\Omega$ with $\Theta(0)=\omega$. By standard methods, cf. \cite{Brevig17, GH99}, it is fairly easy to establish that if $\varphi(\mathbb{C}_0^\ast) \subseteq \Omega$ and $\varphi(+\infty)=\omega$, then $\mathscr{C}_\varphi$ is subordinate to $\mathscr{C}_\psi$.

In analogy with Theorem~\ref{thm:affinesubord}, we will determine which symbols $\varphi$ with the prescribed mapping properties satisfy that $\|\mathscr{C}_\varphi\|_{\mathscr{H}^2} = \|\mathscr{C}_\psi\|_{\mathscr{H}^2}$. For the classical Hardy space of the unit disc, the analogous problem has been solved by J.~H.~Shapiro \cite{Shapiro00}. He showed that the norm equality holds if and only if the symbol generating the composition operator is an inner function, see Theorem~\ref{thm:shapiro2} for a precise statement.

We call a Dirichlet series $f \in \mathscr{H}^2$ inner if its generalized boundary value $f^\ast(\chi)$ is unimodular for almost every $\chi \in \mathbb{T}^\infty$. We refer to Section~\ref{sec:vertical} for an explanation. Our analogue of Shapiro's theorem, Theorem~\ref{thm:inner}, is the following: if $\varphi$, $\psi$ and $\Theta$ are as above, then the following are equivalent. 
\begin{enumerate}
	\item[(a)] $\Theta^{-1} \circ \varphi$ is inner. 
	\item[(b)] $\|\mathscr{C}_\varphi f\|_{\mathscr{H}^2} = \|\mathscr{C}_{\psi} f\|_{\mathscr{H}^2}$ for every $f \in \mathscr{H}^2$. 
	\item[(c)] $\|\mathscr{C}_\varphi\| = \|\mathscr{C}_{\psi}\|$. 
\end{enumerate}

Of course, the most difficult implication is (c) $\implies$ (a). To prove it, we first consider the classical setting, improving a key estimate from \cite{Shapiro00} by showing that it can be made uniform in the ``non-innerness'' of the symbol. By a trick, we are then able to apply this improved uniform estimate ``on average'', thereby extending it to composition operators on $\mathscr{H}^2$. Combining this estimate with the earlier mentioned Theorem~\ref{thm:pointeval} yields the desired implication.

\subsection*{Organization} 
\begin{itemize}
	\item In Section~\ref{sec:vertical} we compile some preliminary results regarding vertical limit functions and non-tangential boundary values for $\mathscr{H}^2$, having the study of composition operators in mind. 
	\item Section~\ref{sec:affine} is devoted to composition operators generated by affine symbols. We prove Theorem~\ref{thm:affinesubord} and revisit the upper bound in \eqref{eq:mpqbounds}. 
	\item In Section~\ref{sec:rpk} we use partial reproducing kernels estimate to investigate lower bounds for the norms of composition operators on $\mathscr{H}^2$. In addition to proving Theorem~\ref{thm:pointeval} and Theorem~\ref{thm:adjointlower}, we discuss a question from \cite{MPQ18} on whether the norm of the composition operator generated by $\varphi(s)=c+r2^{-s}$ can be computed by testing $\mathscr{C}_\varphi$ or its adjoint operator on reproducing kernels. 
	\item In Section~\ref{sec:compdisc} we consider composition operators and inner functions for the Hardy space of the unit disc. Our goal is to obtain two versions of a key estimate from \cite{Shapiro00}. The first is used in the proof of Theorem~\ref{thm:inner}, while the second plays a role in the proof of Theorem~\ref{thm:newupper}. 
	\item Section~\ref{sec:inner} contains the proofs of Theorem~\ref{thm:newupper} and Theorem~\ref{thm:inner}. 
	\item In Section~\ref{sec:examples} we present three examples related to the results of Section~\ref{sec:affine} and Section~\ref{sec:inner}. 
\end{itemize}

\subsection*{Acknowledgments} We are grateful to Horatio Boedihardjo, Titus Hilberdink, and Herv\'e Queff\'elec for helpful discussions.

\section{Vertical limit functions} \label{sec:vertical} 
The purpose of this preliminary section is to extract some useful information about vertical limit functions and composition operators from \cite{Bayart02,GH99,HLS97,QS15}. 

Let us begin by emphasizing that we cannot use Carlson's formula \eqref{eq:carlson} in general. To obtain norm estimates it is, of course, sufficient to consider $\mathscr{C}_\varphi f$ for Dirichlet polynomials $f$ only, since they are dense in $\mathscr{H}^2$. In this case, $\mathscr{C}_\varphi f$ is a bounded analytic function in $\mathbb{C}_0$; in particular, $\mathscr{C}_\varphi f$ is uniformly convergent in $\mathbb{C}_\varepsilon$ for every $\varepsilon>0$ and has non-tangential boundary values almost everywhere on the imaginary axis. However, Saksman and Seip \cite{SS09} have shown that even under these assumptions, we cannot in general recover the $\mathscr{H}^2$-norm as the $L^2$-average of the non-tangential boundary values. Therefore, for a general symbol $\varphi$, we do not expect to obtain a complete understanding of the norm of its composition operator solely from the non-tangential boundary values.

To introduce the vertical limit functions, we let $\mathbb{T}^\infty$ denote the countably infinite Cartesian product of the torus $\mathbb{T}:=\{z\in\mathbb{C}\,:\,|z|=1\}$. The infinite torus $\mathbb{T}^\infty$ forms a compact commutative group under coordinate-wise multiplication. Its Haar measure $m_\infty$ is the countably infinite product measure generated by the normalized Lebesgue arc measure of $\mathbb{T}$, denoted $m$. For
\[\chi = (\chi_1,\chi_2,\chi_3,\ldots) \in \mathbb{T}^\infty\]
we define the character $n \mapsto \chi(n)$ to be completely multiplicative in $n$, setting $\chi(p_j) = \chi_j$. For $f(s)=\sum_{n\geq1} a_n n^{-s}$ and $\chi\in\mathbb{T}^\infty$, the vertical limit function $f_\chi$ is defined by
\[f_\chi(s) := \sum_{n=1}^\infty a_n \chi(n) n^{-s}.\]
Note that the vertical translation $T_\tau f(s):= f(s+i\tau)$, $\tau \in \mathbb{R}$, corresponds to $\chi(n)=n^{-i\tau}$. The name vertical limit function is justified by \cite[Lem.~2.4]{HLS97}, which asserts that the functions $f_\chi$ are precisely those obtained from the Dirichlet series $f$ by taking a limit of vertical translations, 
\begin{equation}\label{eq:vtrans} 
	f_\chi(s) = \lim_{k\to\infty} T_{\tau_k}f(s). 
\end{equation}
The convergence in \eqref{eq:vtrans} is uniform on compact subsets of the half-plane where $f$ converges uniformly. The proof of this fact relies on Kronecker's theorem, which analytically encodes the ``arithmetical independence'' of the prime numbers.

The vertical limit functions $f_\chi$ sometimes have better properties than the original function $f$. As explained in \cite[Sec.~4.2]{HLS97}, if $f$ is in $\mathscr{H}^2$, the Dirichlet series $f_\chi$ converges in $\mathbb{C}_0$ for almost every $\chi \in \mathbb{T}^\infty$, and the non-tangential boundary value 
\begin{equation}\label{eq:chiboundary} 
	f^\ast(\chi) := \lim_{\sigma \to 0^+} f_\chi(\sigma) 
\end{equation}
exists for almost every $\chi \in \mathbb{T}^\infty$. Moreover, $f^\ast$ is in $L^2(\mathbb{T}^\infty)$ and satisfies 
\begin{equation}\label{eq:chinorm} 
	\|f\|_{\mathscr{H}^2}^2 = \int_{\mathbb{T}^\infty} |f^\ast(\chi)|^2 \,dm_\infty(\chi). 
\end{equation}
Hence \eqref{eq:chiboundary} explicitly provides the Bohr correspondence, which is an isometric isomorphism between $\mathscr{H}^2$ and the Hardy space of the infinite torus $H^2(\mathbb{T}^\infty)$. Extending $\chi$ in a completely multiplicative fashion to act on the positive rationals $\mathbb{Q}_+$, any $F$ in $ L^2(\mathbb{T}^\infty)$ has a Fourier series $F(\chi) = \sum_{q \in \mathbb{Q}_+} a_q \chi(q)$, and
\[\|F\|^2_{L^2(\mathbb{T}^\infty)} = \sum_{q \in \mathbb{Q}_+} |a_q|^2.\]
The Hardy space $H^2(\mathbb{T}^\infty)$ is the subspace of $L^2(\mathbb{T}^\infty)$ of functions $F$ such that $a_q = 0$ whenever $q \in \mathbb{Q}_+ \backslash \mathbb{N}$.

We will now discuss the connection between composition operators and vertical limit functions \cite{GH99}. Recall that the Dirichlet series $\varphi$ is in $\mathscr{G}$ if it converges uniformly in $\mathbb{C}_\varepsilon$ for every $\varepsilon>0$ and $\varphi(\mathbb{C}_0^\ast)\subseteq\mathbb{C}_{1/2}$. If $f$ is in $\mathscr{H}^2$, this implies that $\mathscr{C}_\varphi f$ converges uniformly in $\mathbb{C}_\varepsilon$ for every $\varepsilon>0$ and that
\[(\mathscr{C}_\varphi f)_\chi(s) = (f\circ \varphi)_\chi(s) = f \circ \varphi_\chi(s) = \mathscr{C}_{\varphi_\chi} f(s).\]
This implies that $\|\mathscr{C}_{\varphi_\chi} f \|_{\mathscr{H}^2}=\|\mathscr{C}_\varphi f\|_{\mathscr{H}^2}$, and thus $\varphi_\chi$ is in $\mathscr{G}$ for every $\chi \in \mathbb{T}^\infty$. Moreover, the image of the extended half-plane $\mathbb{C}_0^\ast$ is invariant under vertical limits. As far as we know, this claim, certainly known to experts, has not been explicitly stated in the literature. 
\begin{lemma}\label{lem:compchi} 
	Suppose that $\varphi$ is in $\mathscr{G}$ and fix $\chi \in \mathbb{T}^\infty$. Then $\varphi_\chi(\mathbb{C}_0^\ast)=\varphi(\mathbb{C}_0^\ast)$. 
\end{lemma}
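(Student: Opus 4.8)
The plan is to treat $\varphi$ as a Dirichlet series (the running assumption $c_0=0$ of this section) and to assume $\varphi$ is non-constant, the constant case being trivial since then $\varphi_\chi=\varphi$. I would first reduce the claimed set equality to a single inclusion. The functions $\varphi$ and $\varphi_\chi$ are mutually vertical limits of one another: since $(\varphi_\chi)_{\chi^{-1}}=\varphi$ and $\varphi_\chi \in \mathscr{G}$, the characterization \eqref{eq:vtrans} applies equally to $\varphi_\chi$. Hence, once I prove $\varphi_\chi(\mathbb{C}_0^\ast)\subseteq\varphi(\mathbb{C}_0^\ast)$ for an arbitrary $\chi$, applying the same statement with the pair $(\varphi_\chi,\chi^{-1})$ in place of $(\varphi,\chi)$ yields the reverse inclusion $\varphi(\mathbb{C}_0^\ast)\subseteq\varphi_\chi(\mathbb{C}_0^\ast)$, and equality follows. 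The point at infinity is immediate: since $\chi(1)=1$, we have $\varphi_\chi(+\infty)=c_1=\varphi(+\infty)$, so $\omega$ lies in both images.

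For the inclusion on $\mathbb{C}_0$, fix $s_0\in\mathbb{C}_0$ and set $w=\varphi_\chi(s_0)$. By \eqref{eq:vtrans} I may choose a sequence $\tau_k$ with $T_{\tau_k}\varphi\to\varphi_\chi$ uniformly on compact subsets of $\mathbb{C}_0$, using that $\varphi$ converges uniformly in each $\mathbb{C}_\varepsilon$. Choosing $\rho>0$ small enough that the closed disc $\overline{\mathbb{D}(s_0,\rho)}$ lies in $\mathbb{C}_0$, the functions $T_{\tau_k}\varphi-w$ converge uniformly on this disc to $\varphi_\chi-w$, which vanishes at $s_0$ and is not identically zero. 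By Hurwitz's theorem, for all large $k$ there is $s_k\in\mathbb{D}(s_0,\rho)$ with $T_{\tau_k}\varphi(s_k)=w$, that is, $\varphi(s_k+i\tau_k)=w$. Since $\mre(s_k+i\tau_k)=\mre(s_k)>0$, we conclude that $w\in\varphi(\mathbb{C}_0)$. Together with the point at infinity this gives $\varphi_\chi(\mathbb{C}_0^\ast)\subseteq\varphi(\mathbb{C}_0^\ast)$, and the symmetry step then completes the proof.

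The main obstacle is that set-theoretic limits of images behave badly: from $\varphi_\chi(s_0)=\lim_k\varphi(s_0+i\tau_k)$ one only gets $w\in\overline{\varphi(\mathbb{C}_0)}$, which is too weak, because $\varphi(\mathbb{C}_0^\ast)$ need not be closed and $w$ might a priori land on its boundary. The device that overcomes this is Hurwitz's theorem (equivalently, a Rouch\'e argument), which promotes the fact that the \emph{limit} function attains the value $w$ to the statement that the \emph{approximants} $T_{\tau_k}\varphi$ attain $w$ at interior points of $\mathbb{C}_0$; this is exactly what is needed to place $w$ inside $\varphi(\mathbb{C}_0)$ rather than merely in its closure. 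The only points requiring care are the verification that the relevant discs stay within $\mathbb{C}_0$, so that the produced preimages have positive real part, and the observation that $\varphi_\chi$ is non-constant whenever $\varphi$ is, so that Hurwitz's theorem applies.
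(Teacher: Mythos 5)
Your proof is correct and follows essentially the same route as the paper's: both establish the inclusion $\varphi_\chi(\mathbb{C}_0^\ast)\subseteq\varphi(\mathbb{C}_0^\ast)$ by combining the uniform convergence of vertical translates \eqref{eq:vtrans} with a Rouch\'e/Hurwitz argument on a small closed disc in $\mathbb{C}_0$, which forces the translates $T_{\tau_k}\varphi$ to attain the value $\varphi_\chi(s_0)$ at interior points. Your explicit reduction of the equality to one inclusion via the pair $(\varphi_\chi,\chi^{-1})$ is a welcome clarification of a step the paper leaves implicit.
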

\begin{proof}
	Since $\chi(1) = 1$, we have that
	\[\varphi(+\infty) = c_1 = \varphi_\chi(+\infty).\]
	If $\varphi$ is identically constant we are done. Suppose therefore that $\varphi$ is not identically constant. Fix $w \in \mathbb{C}_0$ and let $K$ be closed disk in $\mathbb{C}_0$ which contains $w$ and satisfies that
	\[M = \inf_{s \in \partial K} |\varphi_\chi(s)-\varphi_\chi(w)|>0.\]
	Since $\varphi$ converges uniformly in $\mathbb{C}_\varepsilon$ for every $\varepsilon>0$, we get from \eqref{eq:vtrans} that there is a sequence of real numbers $\tau_k$ such that $\varphi(s+i \tau_k) \to \varphi_\chi(s)$ uniformly for $s \in K$. Hence there is some $\tau_k$ such that
	\[\frac{M}{2}\geq |\varphi(s+i\tau_k)- \varphi_\chi(s)| = |\varphi(s+i\tau_k)-\varphi_\chi(w)- (\varphi_\chi(s)-\varphi_\chi(w))|\]
	for every $s \in K$. By Rouch\'{e}'s theorem we conclude that there is $s_k \in K + i\tau_k$ such that $\varphi(s_k)=\varphi_\chi(w)$. 
\end{proof}

Let us now recall from \cite{Bayart02} how to obtain vertical limit functions for symbols of composition operators. We cannot appeal directly to the discussion above, since there are Dirichlet series in $\mathscr{G}$ which are not in $\mathscr{H}^2$. The Cayley transform 
\begin{equation}\label{eq:cayley} 
	\mathscr{T}(z) := \frac{1-z}{1+z} 
\end{equation}
is a conformal map from $\mathbb{D}$ onto the half-plane $\mathbb{C}_0$. Note that if $\varphi$ is a Dirichlet series in $\mathscr{G}$, then
\[\Phi(s)=\mathscr{T}^{-1}(\varphi(s)-1/2)\]
is a Dirichlet series which converges uniformly in $\mathbb{C}_\varepsilon$ for every $\varepsilon>0$ and $|\Phi(s)|< 1$ in $\mathbb{C}_0$. Hence $\Phi$ is in $\mathscr{H}^\infty$, the space of all Dirichlet series that converge to bounded analytic functions in $\mathbb{C}_0$. The norm is given by
\[\|\Phi\|_{\mathscr{H}^\infty} := \sup_{\mre{s}>0}|\Phi(s)|.\]
We recall from \cite{HLS97} that $\mathscr{H}^\infty$ coincides with the multiplier algebra of $\mathscr{H}^2$. In particular, $\mathscr{H}^\infty \subseteq \mathscr{H}^2$. Hence $\Phi$ has a non-tangential boundary value \eqref{eq:chiboundary} for almost every $\chi \in \mathbb{T}^\infty$. Since $\mathscr{T}$ extends to a homeomorphism on the Riemann sphere $\mathbb{C}^\ast$, we conclude that $\varphi$ has non-tangential boundary values
\[\varphi^\ast(\chi) := \lim_{\sigma \to 0^+} \varphi_{\chi}(\sigma),\]
for almost every $\chi \in \mathbb{T}^\infty$.

We conclude the present section with an extension of \cite[Lem.~4.1]{QS15}, removing the assumption that $\varphi(\mathbb{C}_0^\ast)$ is a bounded subset of $\mathbb{C}_{1/2}$. It shows that all information about the norm of $\mathscr{C}_\varphi$ is encoded in $\varphi^\ast$. 

For its statement, recall from \cite[Thm.~4.11]{HLS97} that if $f$ is in $\mathscr{H}^2$, then $f$ has non-tangential boundary values $f(1/2+it)$ almost everywhere on $\partial \mathbb{C}_{1/2}$. Furthermore, there is a universal constant $C \geq 1$ such that 
\begin{equation}\label{eq:locemb} 
	\int_0^1 |f(1/2 + it)|^2 \, dt \leq C\|f\|_{\mathscr{H}^2}^2. 
\end{equation}
Inequality \eqref{eq:locemb} furnishes an example of a Carleson measure $\mu$ for $\mathscr{H}^2$, that is, a measure $\mu$ on $\overline{\mathbb{C}_{1/2}}$ such that the inclusion $\mathscr{H}^2 \hookrightarrow L^2(\mu)$ is bounded. 
\begin{lemma}\label{lem:carlesoncomp} 
	If $\varphi$ is a Dirichlet series in $\mathscr{G}$ and $f$ is in $\mathscr{H}^2$, then 
	\begin{equation}\label{eq:carlesoncomp} 
		\|\mathscr{C}_\varphi f\|_{\mathscr{H}^2}^2 = \int_{\mathbb{T}^\infty} |f\circ \varphi^\ast(\chi)|^2\,dm_\infty(\chi). 
	\end{equation}
\end{lemma}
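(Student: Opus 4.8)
The plan is to reduce the general statement to the bounded case already established in \cite[Lem.~4.1]{QS15}, using an exhaustion of $\varphi(\mathbb{C}_0^\ast)$ by bounded subdomains of $\mathbb{C}_{1/2}$, together with the boundary-value machinery assembled earlier in this section. First I would reduce to Dirichlet polynomials $f$: since such $f$ are dense in $\mathscr{H}^2$ and the left-hand side of \eqref{eq:carlesoncomp} depends continuously on $f$ (because $\mathscr{C}_\varphi$ is bounded by Gordon--Hedenmalm), it suffices to prove the identity for $f$ a Dirichlet polynomial, provided the right-hand side also behaves well under $\mathscr{H}^2$-limits; the latter will be handled by the Carleson embedding \eqref{eq:locemb} lifted to $\mathbb{T}^\infty$, as explained below. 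For a Dirichlet polynomial $f$, the composition $\mathscr{C}_\varphi f$ converges uniformly in every $\mathbb{C}_\varepsilon$, so its $\mathscr{H}^2$-norm can be written via the vertical-limit/boundary-value formula \eqref{eq:chinorm}:
\[
\|\mathscr{C}_\varphi f\|_{\mathscr{H}^2}^2 = \int_{\mathbb{T}^\infty} |(\mathscr{C}_\varphi f)^\ast(\chi)|^2\,dm_\infty(\chi).
\]

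Next I would identify the boundary value $(\mathscr{C}_\varphi f)^\ast(\chi)$ with $f\circ\varphi^\ast(\chi)$. Using the relation $(\mathscr{C}_\varphi f)_\chi = \mathscr{C}_{\varphi_\chi} f = f\circ \varphi_\chi$ recalled just before Lemma~\ref{lem:compchi}, the boundary value along $\chi$ is the radial limit $\lim_{\sigma\to0^+} f(\varphi_\chi(\sigma))$. For almost every $\chi$, the boundary value $\varphi^\ast(\chi) = \lim_{\sigma\to0^+}\varphi_\chi(\sigma)$ exists by the Cayley-transform argument given above. When $\varphi^\ast(\chi)$ lands in the open half-plane $\mathbb{C}_{1/2}$, continuity of $f$ (a polynomial, hence entire in the relevant region) gives $(\mathscr{C}_\varphi f)^\ast(\chi)= f(\varphi^\ast(\chi)) = f\circ\varphi^\ast(\chi)$ directly. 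The delicate set is where $\varphi^\ast(\chi)\in\partial\mathbb{C}_{1/2}$; here $f$ still has non-tangential boundary values $f(1/2+it)$ a.e.\ by \cite[Thm.~4.11]{HLS97}, and I would argue that for a.e.\ such $\chi$ the approach of $\varphi_\chi(\sigma)$ to its boundary value is non-tangential, so that the composed boundary value is again $f\circ\varphi^\ast(\chi)$ in the sense of the boundary trace of $f$.

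The main obstacle is precisely controlling this boundary set $\{\chi : \mre\varphi^\ast(\chi) = 1/2\}$ and justifying the non-tangential approach and the interchange of limits there. To handle it cleanly I would replace $\varphi$ by the exhaustion $\varphi_\rho := 1/2 + \rho\,(\varphi - 1/2)$ for $0<\rho<1$, whose image is a bounded subset of $\mathbb{C}_{1/2}$ (after a harmless normalization), so that \cite[Lem.~4.1]{QS15} applies verbatim to give
\[
\|\mathscr{C}_{\varphi_\rho} f\|_{\mathscr{H}^2}^2 = \int_{\mathbb{T}^\infty} |f\circ \varphi_\rho^\ast(\chi)|^2\,dm_\infty(\chi).
\]
I would then let $\rho\to1^-$. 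On the left, $\mathscr{C}_{\varphi_\rho}f \to \mathscr{C}_\varphi f$ in $\mathscr{H}^2$ by uniform convergence on compacta plus a dominated-convergence argument in the norm. On the right, $\varphi_\rho^\ast(\chi)\to\varphi^\ast(\chi)$ pointwise a.e., and the integrands converge a.e.; the pull-back of the local Carleson measure \eqref{eq:locemb} along $\varphi^\ast$ provides a uniform-in-$\rho$ $L^2(\mathbb{T}^\infty)$ bound on $f\circ\varphi_\rho^\ast$, supplying the domination needed to pass the limit inside the integral and to upgrade the density reduction from polynomials to all of $\mathscr{H}^2$. The only real subtlety is verifying that \eqref{eq:locemb}, a Carleson condition on $\overline{\mathbb{C}_{1/2}}$, transfers to a uniform bound on $\int_{\mathbb{T}^\infty}|f\circ\varphi_\rho^\ast|^2\,dm_\infty$; this is where the embedding constant $C$ and the a.e.\ existence of $\varphi^\ast$ combine to close the argument.
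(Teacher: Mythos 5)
Your reduction to Dirichlet polynomials and the use of \eqref{eq:chinorm} match the paper's first step, but the centerpiece of your argument --- the exhaustion $\varphi_\rho = 1/2 + \rho(\varphi-1/2)$ --- does not do what you claim. This dilation only pushes the image away from the line $\mre{s}=1/2$; it does not make it bounded. If, say, $\varphi(\mathbb{C}_0^\ast)=\mathbb{C}_{1/2}$ (as for the symbols $\varphi_\alpha$ in Section~\ref{sec:rpk}), then $\varphi_\rho(\mathbb{C}_0^\ast)=\mathbb{C}_{1/2}$ for every $\rho$, so \cite[Lem.~4.1]{QS15} still does not apply and the exhaustion collapses at its first step. (A dilation in the Cayley coordinate, $1/2+\mathscr{T}(\rho\,\mathscr{T}^{-1}(\varphi-1/2))$, would genuinely bound the image, but then every limit interchange as $\rho\to1^-$ on the right-hand side requires exactly the uniform Carleson-measure control that is at issue.) Moreover, the exhaustion is aimed at the wrong difficulty: for a Dirichlet polynomial $f$ there is no boundary subtlety at all, since $f$ is entire and hence continuous across $\partial\mathbb{C}_{1/2}$, so $(f\circ\varphi)^\ast(\chi)=f\circ\varphi^\ast(\chi)$ follows immediately from the a.e.\ existence of $\varphi^\ast$, and \eqref{eq:chinorm} gives \eqref{eq:carlesoncomp} for polynomials with no further work. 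This is precisely what the paper does.

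The genuine difficulty, which your proposal leaves unresolved, is the passage from polynomials to general $f\in\mathscr{H}^2$. The set $\{\chi : \mre{\varphi^\ast(\chi)}=1/2\}$ may have positive measure, and on it $f\circ\varphi^\ast$ is only meaningful through the non-tangential boundary trace of $f$; one must show that the $L^2(m_\infty)$-limit of $f_k\circ\varphi^\ast$ along an approximating sequence of polynomials coincides a.e.\ with that trace. The paper's route is: the identity for polynomials exhibits the push-forward $\mu_{\varphi^\ast}$ of $m_\infty$ as a Carleson measure for $\mathscr{H}^2$ (by boundedness of $\mathscr{C}_\varphi$); a reproducing-kernel computation upgrades this to the classical Carleson condition for $H^2(\mathbb{C}_{1/2})$, forcing $\mu_{\varphi^\ast}|_{\partial\mathbb{C}_{1/2}}$ to be absolutely continuous with bounded density with respect to Lebesgue measure; and only then does the local embedding \eqref{eq:locemb} identify the $L^2(\nu)$-limit of the $f_k$ with the boundary values of $f$. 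Your closing sentence gestures at \eqref{eq:locemb} ``pulled back along $\varphi^\ast$'' but supplies no argument for the absolute continuity of the boundary part of the push-forward measure, without which the right-hand side of \eqref{eq:carlesoncomp} is not even well-defined for general $f$. This is the missing idea.
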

\begin{proof}
	We assume first that $f$ is a Dirichlet polynomial. Since $\varphi$ is in $\mathscr{G}$ we know that $f\circ\varphi$ is in $\mathscr{H}^2$ and hence that the boundary value $(f\circ \varphi)^\ast(\chi)$ exists for almost every $\chi \in \mathbb{T}^\infty$. Inserting this into \eqref{eq:chinorm}, we find that
	\[\|\mathscr{C}_\varphi f\|_{\mathscr{H}^2}^2 = \|f \circ \varphi\|_{\mathscr{H}^2}^2 = \int_{\mathbb{T}^\infty} |(f\circ\varphi)^\ast(\chi)|^2\,dm_\infty(\chi).\]
	Since $f$ is a Dirichlet polynomial and the non-tangential boundary value $\varphi^\ast(\chi)$ exists for almost every $\chi$, we conclude that $(f\circ\varphi)^\ast(\chi)=f\circ\varphi^\ast(\chi)$ holds for almost every $\chi$. Hence we have established \eqref{eq:carlesoncomp} when $f$ is a Dirichlet polynomial. 
	
	We now let $\mu_{\varphi^\ast}$ denote the push-forward of $m_\infty$ by $\varphi^\ast$, which for polynomials $f$ yields that
	\[\|\mathscr{C}_\varphi f\|_{\mathscr{H}^2}^2 = \int_{\mathbb{T}^\infty} |f\circ\varphi^\ast(\chi)|^2\,dm_\infty(\chi) = \int_{\overline{\mathbb{C}_{1/2}}}|f(s)|^2\,d\mu_{\varphi^\ast}(s).\]
	Since $\mathscr{C}_\varphi \colon \mathscr{H}^2\to\mathscr{H}^2$ is bounded, we find that $\mu_{\varphi^\ast}$ is a Carleson measure for $\mathscr{H}^2$. Additionally, since the reproducing kernel of $\mathscr{H}^2$ at the point $w\in\mathbb{C}_{1/2}$ is
	\[K_w(s) = \zeta(s+\overline{w}) = \frac{1}{s+\overline{w}-1}+O(1),\]
	a simple argument (see e.g.~\cite[Thm.~3]{OS12}) shows that $\mu_{\varphi^\ast}$ satisfies Carleson's condition for $H^2(\mathbb{C}_{1/2})$. In particular, $\nu = \mu_{\varphi^\ast}|_{\partial \mathbb{C}_{1/2}}$ is absolutely continuous with respect to the one-dimensional Lebesgue measure $m$, and the density $\frac{d\nu}{dm}$ is bounded. 
	
	Suppose that $(f_k)_{k\geq1}$ is a sequence of polynomials such that $f_k \to f \in \mathscr{H}^2$. Since $\mu_{\varphi^\ast}$ is a Carleson measure for $\mathscr{H}^2$, the sequence $(f_k)_{k\geq1}$ also has a limit in $L^2(\nu)$. But by the above observation and \eqref{eq:locemb}, the limit must coincide with the non-tangential boundary values of $f$ on the support of $\nu$. Hence, by the boundedness of $\mathscr{C}_\varphi$ we conclude that
	\[\|\mathscr{C}_\varphi f\|_{\mathscr{H}^2}^2 = \int_{\overline{\mathbb{C}_{1/2}}}|f(s)|^2\,d\mu_{\varphi^\ast}(s), \qquad f \in \mathscr{H}^2.\]
	This is equivalent to \eqref{eq:carlesoncomp}, by the definition of a push-forward measure. 
\end{proof}
As is well known, the proof of Lemma~\ref{lem:carlesoncomp} shows that questions about composition operators $\mathscr{C}_\varphi$ can be recast in terms of embedding problems. For example, in the special case that $\varphi(s) = c+r2^{-s}$, studied in \cite{MPQ18}, the upper estimate of \eqref{eq:mpqbounds} can be restated as
\[\int_{\overline{\mathbb{C}_{1/2}}}|f(s)|^2\,d\mu_{\varphi^\ast}(s) = \int_{\mathbb{T}} |f(c+r\chi_1)|^2 \, dm(\chi_1) \leq \zeta(1+\xi) \|f\|_{\mathscr{H}^2}^2, \qquad f \in \mathscr{H}^2.\]
We also mention \cite{Brevig17}, where the norms of composition operators were computed exactly through the associated Carleson embeddings, for a small family of operators. In the latter example, boundedness of the induced Carleson embeddings is easily seen to be seen to be equivalent to the embedding property \eqref{eq:locemb}, although the norms are different. In general, the Carleson measures of $\mathscr{H}^2$ arising from composition operators \cite{QS15} are much better understood than general Carleson measures \cite{PP18}. In the non-Hilbertian case of $\mathscr{H}^p$, $p\neq2$, defined in the next section, the situation is even more complicated \cite{BBHOP19, Harper17, OS12}.

\section{Composition operators generated by affine symbols} \label{sec:affine} 
Let $\varphi(s)=c + \sum_{j\geq1} c_j p^{-s}_j$ be an affine symbol of the form \eqref{eq:affinesymbol}. The terminology here is justified by the fact that $\varphi^\ast(\chi) = c + \sum_{j\geq1} c_j \chi_j$. We begin by computing the image of the extended half-plane $\mathbb{C}_0^\ast$ under $\varphi$. 
\begin{lemma}\label{lem:affinemap} 
	Let $\varphi$ be an affine symbol of the form \eqref{eq:affinesymbol} belonging to $\mathscr{G}$. Then $\varphi(\mathbb{C}_0^\ast ) = \mathbb{D}(c,r)$, where 
	\begin{equation}\label{eq:sumr} 
		r = \sum_{j=1}^\infty |c_j| \leq \mre{c}-1/2. 
	\end{equation}
\end{lemma}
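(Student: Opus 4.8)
The plan is to prove the two inclusions separately, assuming throughout that $\varphi$ is non-constant (so $r>0$); the degenerate case $\varphi\equiv c$ is excluded. First the easy inclusion $\varphi(\mathbb{C}_0^\ast)\subseteq\mathbb{D}(c,r)$. For $s\in\mathbb{C}_0$ we have $|p_j^{-s}|=p_j^{-\mre s}<1$, so the triangle inequality gives $|\varphi(s)-c|\le\sum_{j=1}^d|c_j|p_j^{-\mre s}<\sum_{j=1}^d|c_j|=r$; together with $\varphi(+\infty)=c$ this places the whole image inside $\mathbb{D}(c,r)$. Once the reverse inclusion is in hand, the bound $r\le\mre c-1/2$ follows for free: membership in $\mathscr{G}$ forces $\varphi(\mathbb{C}_0)\subseteq\mathbb{C}_{1/2}$, and since the image contains points of real part arbitrarily close to $\mre c-r$ (the left-most point of $\mathbb{D}(c,r)$), we must have $\mre c-r\ge 1/2$.

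For the reverse inclusion $\mathbb{D}(c,r)\subseteq\varphi(\mathbb{C}_0^\ast)$ I would argue pointwise. The center $c=\varphi(+\infty)$ is attained at infinity, so fix $w\in\mathbb{D}(c,r)$ with $w\ne c$ and write $w-c=\rho e^{i\psi}$ with $0<\rho<r$. The function $\sigma\mapsto\sum_{j=1}^d|c_j|p_j^{-\sigma}$ is continuous and strictly decreasing on $(0,\infty)$, tends to $r$ as $\sigma\to0^+$ and to $0$ as $\sigma\to\infty$; hence there is a $\sigma_\ast>0$ with $\sum_{j=1}^d|c_j|p_j^{-\sigma_\ast}=\rho$. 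I would then \emph{align the phases} of the coefficients: choose $\chi\in\mathbb{T}^\infty$ with $\chi(p_j)=e^{i(\psi-\arg c_j)}$ for each $j$ with $c_j\ne0$ (and arbitrarily otherwise), so that the vertical limit function $\varphi_\chi(s)=c+\sum_{j=1}^d c_j\chi(p_j)p_j^{-s}$ satisfies $\varphi_\chi(\sigma_\ast)=c+e^{i\psi}\sum_{j=1}^d|c_j|p_j^{-\sigma_\ast}=c+\rho e^{i\psi}=w$. Thus $w\in\varphi_\chi(\mathbb{C}_0)$, and since $\varphi\in\mathscr{G}$, Lemma~\ref{lem:compchi} yields $w\in\varphi_\chi(\mathbb{C}_0^\ast)=\varphi(\mathbb{C}_0^\ast)$, as desired.

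The conceptual obstacle is that Kronecker's theorem only makes the phases $(p_1^{-it},\dots,p_d^{-it})$ \emph{dense} in the finite torus, so a direct attack would show merely that every $w\in\mathbb{D}(c,r)$ lies in the \emph{closure} of $\varphi(\mathbb{C}_0)$, which does not by itself rule out omitted interior values (as already the case $d=1$, where $\varphi(\mathbb{C}_0)=\mathbb{D}(c,r)\setminus\{c\}$, illustrates). The device above sidesteps this entirely: rather than approximating $w$ by a sequence $\varphi(\sigma_\ast+it_k)$, I realize $w$ as the \emph{exact} value $\varphi_\chi(\sigma_\ast)$ of a single vertical limit function and invoke the invariance of the image under vertical limits. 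The genuine analytic content — that $\varphi_\chi$ attains every value its boundary behavior suggests — is precisely what Lemma~\ref{lem:compchi} supplies, its proof being a Rouch\'e argument built on the uniform convergence $\varphi(\cdot+i\tau_k)\to\varphi_\chi$ furnished by Kronecker's theorem. Concentrating the difficulty there keeps the present proof to an elementary phase-alignment computation.
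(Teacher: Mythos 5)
Your proof is correct and takes essentially the same approach as the paper: both reduce the reverse inclusion to Lemma~\ref{lem:compchi} combined with a phase-aligning choice of $\chi$ and the fact that $\sigma\mapsto\sum_j|c_j|p_j^{-\sigma}$ maps $(0,\infty)$ onto $(0,r)$. The only difference is that the paper first establishes \eqref{eq:sumr} (and, in the case of infinitely many nonzero coefficients, their summability) directly by choosing $\chi$ with $\chi_jc_j\le 0$ and letting $\sigma\to0^+$, whereas you deduce it afterwards from the surjectivity onto $\mathbb{D}(c,r)$ --- equivalent for finite $d$, which is all that \eqref{eq:affinesymbol} literally requires.
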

\begin{proof}
	By Lemma~\ref{lem:compchi}, we can replace $\varphi$ with $\varphi_\chi$ without affecting $\varphi(\mathbb{C}_0^\ast)$. We begin by choosing $\chi \in \mathbb{T}^\infty$ such that $\chi(p_j) c_j \leq 0$ for every $j$, which is possible since $\chi(p_j)=\chi_j$. Since $\varphi_\chi(\mathbb{C}_0) \subseteq \mathbb{C}_{1/2}$ we find that
	\[\mre{c}- \sum_{j=1}^\infty |c_j|\, p_j^{-\sigma}>1/2\]
	for every $\sigma>0$. We let $\sigma\to0^+$ to see that the coefficient sequence is summable and satisfies \eqref{eq:sumr}. Furthermore, it is clear that $\varphi(\mathbb{C}_0^\ast)\subseteq \mathbb{D}(c,r)$ since $\varphi(+\infty)=c$ and
	\[|\varphi(s)-c|\leq \sum_{j=1}^\infty |c_j| p_j^{-\sigma}<r, \qquad s \in \mathbb{C}_0.\]
	Replacing $\chi = (\chi_1, \chi_2,\ldots)$ with $e^{i\theta} \chi = (e^{i\theta}\chi_1, e^{i\theta}\chi_2,\ldots)$, we observe that the set $\varphi(\mathbb{C}_0^\ast)-c$ is invariant under rotations. The conclusion now follows from the fact that
	\[\sigma \mapsto \sum_{j=1}^\infty |c_j|\,p_j^{-\sigma}\]
	maps $(0,\infty)$ onto $(0,r)$. 
\end{proof}

We will need a preliminary lemma before we proceed to the main result of this section. Since the lemma might be of independent interest, we state it for general Hardy spaces of Dirichlet series. Following \cite{Bayart02}, the Hardy space $\mathscr{H}^q$, $1\leq q < \infty$, is defined as the closure of Dirichlet polynomials $f(s) = \sum_{n=1}^N a_n n^{-s}$ in the Besicovitch norm
\[\|f\|_{\mathscr{H}^q} ^q := \lim_{T\to\infty} \frac{1}{2T} \int_{-T}^T |f(it)|^q \,dt.\]
We will rely on the facts that the $\mathscr{H}^q$-norm satisfies the triangle inequality, that it is invariant under permutations of the prime numbers, and that it is strictly convex for $q > 1$. The easiest way to establish these properties is to identify $\mathscr{H}^q$ with $H^q(\mathbb{T}^\infty)$ \cite[Thm.~2]{Bayart02}, as was described for $q=2$ in Section~\ref{sec:vertical}.

For given $d\geq1$ and $r>0$, let $\mathscr{L}(d,r)$ denote the family of sequences $\mathbf{c} = (c_1,c_2,\ldots,c_d)$ satisfying $c_j\geq0$ and $c_1+c_2+\cdots+c_d=r$. For every $\mathbf{c} \in \mathscr{L}(d,r)$, we consider the corresponding linear function
\[L_{\mathbf{c}}(s) := \sum_{j=1}^d c_j p_j^{-s}.\]
Let $\mathbf{c}^\downarrow$ denote the decreasing rearrangement of $\mathbf{c}$. We write $\mathbf{b} \prec \mathbf{c}$ if $\mathbf{c}$ majorizes $\mathbf{b}$, that is, if
\[\sum_{j=1}^k b_j^\downarrow \leq \sum_{j=1}^k c_j^\downarrow\]
for $k=1,2,\ldots,d-1$, with equality for $j=d$. We note that 
\begin{equation}\label{eq:seqdom} 
	\left(\frac{r}{d},\frac{r}{d},\ldots,\frac{r}{d}\right) \prec \mathbf{c} \prec (r,0,0,\ldots, 0) 
\end{equation}
for every $\mathbf{c} \in \mathscr{L}(d,r)$. 
\begin{lemma}\label{lem:convex} 
	Let $1 \leq q \leq \infty$. If $\mathbf{b}, \mathbf{c} \in \mathscr{L}(d,r)$ and $\mathbf{b} \prec \mathbf{c}$, then $\|L_{\mathbf{b}}\|_{\mathscr{H}^q}\leq \|L_{\mathbf{c}}\|_{\mathscr{H}^q}$. The inequality is strict if $\mathbf{b}$ is not a permutation of $\mathbf{c}$ and $1 < q < \infty$. 
\end{lemma}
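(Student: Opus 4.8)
The plan is to recognize $\mathbf{c} \mapsto \|L_{\mathbf{c}}\|_{\mathscr{H}^q}$ as a symmetric convex function on $\mathscr{L}(d,r)$ and then invoke the Hardy--Littlewood--P\'olya theory of majorization. Two structural facts do the work. First, the map $\mathbf{c} \mapsto L_{\mathbf{c}}$ is linear, so the triangle inequality for the $\mathscr{H}^q$-norm (together with homogeneity) immediately yields that $\Phi(\mathbf{c}) := \|L_{\mathbf{c}}\|_{\mathscr{H}^q}$ is convex on $\mathscr{L}(d,r)$. Second, the permutation invariance of the $\mathscr{H}^q$-norm gives that $\Phi$ is symmetric, that is, $\Phi(\sigma\mathbf{c}) = \Phi(\mathbf{c})$ for every permutation $\sigma$ of the coordinates. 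Both facts are supplied by the three properties recalled just before the statement.

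For the inequality, I would use the classical characterization (Hardy--Littlewood--P\'olya, combined with Birkhoff's theorem on doubly stochastic matrices) that $\mathbf{b} \prec \mathbf{c}$ holds if and only if $\mathbf{b}$ lies in the convex hull of the permutations of $\mathbf{c}$; that is, $\mathbf{b} = \sum_\sigma \lambda_\sigma\, \sigma\mathbf{c}$ with $\lambda_\sigma \geq 0$ and $\sum_\sigma \lambda_\sigma = 1$. Convexity followed by symmetry then gives
\[\Phi(\mathbf{b}) = \Phi\Big(\sum_\sigma \lambda_\sigma\, \sigma\mathbf{c}\Big) \leq \sum_\sigma \lambda_\sigma\, \Phi(\sigma\mathbf{c}) = \sum_\sigma \lambda_\sigma\, \Phi(\mathbf{c}) = \Phi(\mathbf{c}),\]
which is the desired $\|L_{\mathbf{b}}\|_{\mathscr{H}^q} \leq \|L_{\mathbf{c}}\|_{\mathscr{H}^q}$, valid for every $1 \leq q \leq \infty$.

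The harder point, and where I expect the main obstacle, is the strict inequality for $1 < q < \infty$; here I would upgrade the previous display using the strict convexity of the $\mathscr{H}^q$-norm. Discard the permutations with $\lambda_\sigma = 0$ and regard $L_{\mathbf{b}} = \sum_\sigma \lambda_\sigma L_{\sigma\mathbf{c}}$ as a genuine convex combination of the surviving terms. Because the functions $p_j^{-s}$ are linearly independent, $L_{\mathbf{v}} = L_{\mathbf{w}}$ forces $\mathbf{v} = \mathbf{w}$; consequently, if all surviving summands coincided, then $\mathbf{b}$ would equal a single $\sigma\mathbf{c}$, i.e. a permutation of $\mathbf{c}$. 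So, when $\mathbf{b}$ is not a permutation of $\mathbf{c}$, at least two distinct terms $L_{\sigma_1\mathbf{c}} \neq L_{\sigma_2\mathbf{c}}$ appear with positive weight, while permutation invariance makes every surviving term share the common norm $\|L_{\mathbf{c}}\|_{\mathscr{H}^q}$. The remaining and only delicate step is the general fact that in a strictly convex space the equality $\|\sum_\sigma \lambda_\sigma g_\sigma\| = \sum_\sigma \lambda_\sigma \|g_\sigma\|$, with all $\lambda_\sigma > 0$ and all $g_\sigma \neq 0$, forces the $g_\sigma$ to be positive scalar multiples of one another. Applied to $g_\sigma = L_{\sigma\mathbf{c}}$, whose norms are all equal, this would force the surviving terms to coincide, contradicting the two distinct terms just exhibited. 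Hence the triangle inequality for $\sum_\sigma \lambda_\sigma L_{\sigma\mathbf{c}}$ must be strict, giving $\|L_{\mathbf{b}}\|_{\mathscr{H}^q} < \|L_{\mathbf{c}}\|_{\mathscr{H}^q}$.
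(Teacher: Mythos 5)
Your proposal is correct and follows essentially the same route as the paper: the Birkhoff--von Neumann (Hardy--Littlewood--P\'olya) representation of $\mathbf{b}$ as a convex combination of permutations of $\mathbf{c}$, followed by the triangle inequality and permutation invariance of the $\mathscr{H}^q$-norm, with strict convexity handling the case $1<q<\infty$. Your elaboration of the strict-inequality step (at least two distinct surviving terms of equal norm cannot be positive multiples of one another) is a correct filling-in of a detail the paper leaves implicit.
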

\begin{proof}
	By the Birkhoff--von Neumann theorem, $\mathbf{b} \prec \mathbf{c}$ holds if and only if there is a finite number of permutations $(P_k)$ and non-negative weights $(\lambda_k)$ such that $\lambda_1 + \cdots + \lambda_K =1$ and
	\[\mathbf{b} = \sum_{k=1}^K \lambda_k P_k \mathbf{c}.\]
	By the triangle inequality and invariance under permutations of prime numbers, we obtain that
	\[\|L_\mathbf{b}\|_{\mathscr{H}^q} = \left\|\sum_{k=1}^K \lambda_k L_{P_k \mathbf{c}} \right\|_{\mathscr{H}^q} \leq \sum_{k=1}^K \lambda_k \|L_{P_k \mathbf{c}}\|_{\mathscr{H}^q} = \sum_{k=1}^K \lambda_k \|L_{\mathbf{c}}\|_{\mathscr{H}^q} = \|L_\mathbf{c}\|_{\mathscr{H}^q},\]
	which is the required inequality. If $1 < q < \infty$ and $\mathbf{b}$ is not a permutation of $\mathbf{c}$, the inequality is strict, owing to the strict convexity of $\mathscr{H}^q$. 
\end{proof}

The main result of this section consists of a partial subordination principle for the family of affine symbols that map the extended right half-plane onto the same disc, and a sharpened inequality for comparison with the maximal element of the family.

Recall from Section~\ref{sec:vertical} that we may replace $\varphi$ with $\varphi_\chi$ for any $\chi\in\mathbb{T}^\infty$ without changing its mapping properties or the norm of $\|\mathscr{C}_\varphi f\|_{\mathscr{H}^2}$, for any $f \in \mathscr{H}^2$. Hence we may assume that $c_j\geq0$. 
\begin{theorem}\label{thm:affinesubord} 
	Fix $c$ and $r$ such that $\mre{c}-1/2\geq r>0$ and let $d$ be a positive integer. For $\mathbf{c} \in \mathscr{L}(d,r)$, let
	\[\varphi_{\mathbf{c}}(s) := c + \sum_{j=1}^d c_j p_j^{-s}.\]
	Suppose that $\mathbf{b}, \mathbf{c} \in \mathscr{L}(d,r)$ and $\mathbf{b} \prec \mathbf{c}$. Then
	\[\|\mathscr{C}_{\varphi_\mathbf{b}} f\|_{\mathscr{H}^2}\leq \|\mathscr{C}_{\varphi_\mathbf{c}} f\|_{\mathscr{H}^2}, \qquad f \in \mathscr{H}^2.\]
	Furthermore, if $\mathbf{b}\prec\mathbf{c}$, the following are equivalent. 
	\begin{enumerate}
		\item[(a)] $\mathbf{b}$ is a permutation of $\mathbf{c}$. 
		\item[(b)] $\|\mathscr{C}_{\varphi_\mathbf{b}} f\|_{\mathscr{H}^2} = \|\mathscr{C}_{\varphi_\mathbf{c}} f\|_{\mathscr{H}^2}$ for every $f \in \mathscr{H}^2$. 
		\item[(c)] $\|\mathscr{C}_{\varphi_\mathbf{b}}\| = \|\mathscr{C}_{\varphi_\mathbf{c}}\|$. 
	\end{enumerate}
	Additionally, for every $\mathbf{c} \in \mathscr{L}(d,r)$ it holds that 
	\begin{equation}\label{eq:effectivesubord} 
		\|\mathscr{C}_{\varphi_\mathbf{c}} f\|_{\mathscr{H}^2}^2 \leq (1-C)|f(c)|^2 + C \|\mathscr{C}_\varphi f\|_{\mathscr{H}^2}^2, \qquad f \in \mathscr{H}^2, 
	\end{equation}
	where $\varphi(s) = c + r2^{-s}$ and $C = \|\mathbf{c}\|_{\ell^2}^2 / \|\mathbf{c}\|_{\ell_1}^2$. The estimate \eqref{eq:effectivesubord} also holds in the case when $d=\infty$. 
\end{theorem}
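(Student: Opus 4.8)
The plan rests on one exact moment formula. For $\mathbf{c} \in \mathscr{L}(d,r)$ the affine structure gives $\varphi_{\mathbf{c}}^\ast(\chi) = c + L_{\mathbf{c}}^\ast(\chi)$ with $L_{\mathbf{c}}^\ast(\chi) = \sum_j c_j \chi_j$, so Lemma~\ref{lem:carlesoncomp} yields $\|\mathscr{C}_{\varphi_{\mathbf{c}}} f\|_{\mathscr{H}^2}^2 = \int_{\mathbb{T}^\infty} \left|f\bigl(c + L_{\mathbf{c}}^\ast(\chi)\bigr)\right|^2 \, dm_\infty(\chi)$. I would first establish the formula for a Dirichlet polynomial $f$, for which $w \mapsto f(c+w)$ is entire. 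Expanding it in its Taylor series $\sum_{m \geq 0} a_m w^m$ about $c$ and using that the pushforward of $m_\infty$ under $L_{\mathbf{c}}^\ast$ is rotation invariant (since $L_{\mathbf{c}}^\ast(e^{i\theta}\chi) = e^{i\theta} L_{\mathbf{c}}^\ast(\chi)$), every cross term vanishes and
\[\|\mathscr{C}_{\varphi_{\mathbf{c}}} f\|_{\mathscr{H}^2}^2 = \sum_{m \geq 0} |a_m|^2 M_m(\mathbf{c}), \qquad M_m(\mathbf{c}) := \int_{\mathbb{T}^\infty} |L_{\mathbf{c}}^\ast(\chi)|^{2m} \, dm_\infty(\chi) = \|L_{\mathbf{c}}\|_{\mathscr{H}^{2m}}^{2m}.\]
Here $a_m = a_m(f)$ carries the dependence on $f$ and $M_m(\mathbf{c})$ all the dependence on $\mathbf{c}$; the formula extends to $f \in \mathscr{H}^2$ by density, with Lemma~\ref{lem:carlesoncomp} as the controlling bound. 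Both displayed inequalities of the theorem then become termwise statements. If $\mathbf{b} \prec \mathbf{c}$, Lemma~\ref{lem:convex} with $q = 2m$ gives $M_m(\mathbf{b}) \leq M_m(\mathbf{c})$ for every $m$, and summing against $|a_m(f)|^2$ gives the subordination inequality. For \eqref{eq:effectivesubord}, note $M_0(\mathbf{c}) = 1$, $|f(c)|^2 = |a_0|^2$, while the maximal symbol $\varphi(s) = c + r2^{-s}$ has $M_m = r^{2m}$; since $\|\mathbf{c}\|_{\ell^1} = r$, it suffices to prove the termwise bound $M_m(\mathbf{c}) \leq C r^{2m}$ for $m \geq 1$ with $C = \|\mathbf{c}\|_{\ell^2}^2/r^2$, and this follows from $|L_{\mathbf{c}}^\ast(\chi)| \leq r$, which gives $M_m(\mathbf{c}) \leq r^{2m-2}\int |L_{\mathbf{c}}^\ast|^2\,dm_\infty = r^{2m-2}\|\mathbf{c}\|_{\ell^2}^2$. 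Nothing here uses $d < \infty$.

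For the equivalences, (a) $\Rightarrow$ (b) holds because permuting $\mathbf{c}$ preserves each $\mathscr{H}^{2m}$-norm and hence every $M_m$. For the converse I would test (b) against $f(s) = 2^{-s}$, whose Taylor coefficients about $c$ are all nonzero: since $M_m(\mathbf{c}) - M_m(\mathbf{b}) \geq 0$, with strict inequality for each $m \geq 1$ unless $\mathbf{b}$ is a permutation of $\mathbf{c}$ (the strict case of Lemma~\ref{lem:convex}, valid since $2m \in (1,\infty)$), equality in (b) forces (a). The implication (b) $\Rightarrow$ (c) is trivial.

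The substantial implication is (c) $\Rightarrow$ (a), which I would argue contrapositively: if $\mathbf{b} \prec \mathbf{c}$ is not a permutation of $\mathbf{c}$, then $\|\mathscr{C}_{\varphi_{\mathbf{b}}}\| < \|\mathscr{C}_{\varphi_{\mathbf{c}}}\|$. When $\mre{c} - 1/2 > r$ the disc $\overline{\mathbb{D}(c,r)}$ lies compactly in $\mathbb{C}_{1/2}$, so $\mathscr{C}_{\varphi_{\mathbf{b}}}$ is compact and attains its norm at some unit $g_0$; as $\|\mathscr{C}_{\varphi_{\mathbf{b}}}\| \geq \sqrt{\zeta(2\mre c)} > 1$ by \eqref{eq:genlower}, $g_0$ is nonconstant, some $a_m(g_0) \neq 0$ with $m \geq 1$, and strict termwise inequality gives $\|\mathscr{C}_{\varphi_{\mathbf{b}}}\|^2 = \|\mathscr{C}_{\varphi_{\mathbf{b}}} g_0\|^2 < \|\mathscr{C}_{\varphi_{\mathbf{c}}} g_0\|^2 \leq \|\mathscr{C}_{\varphi_{\mathbf{c}}}\|^2$. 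The boundary case $\mre{c} - 1/2 = r$ is the main obstacle, since then $\overline{\mathbb{D}(c,r)}$ is tangent to $\partial\mathbb{C}_{1/2}$, the operator is not compact, and the norm need not be attained. Here I would instead take unit vectors $f_k$ with $\|\mathscr{C}_{\varphi_{\mathbf{b}}} f_k\|^2 \to N^2 := \|\mathscr{C}_{\varphi_{\mathbf{b}}}\|^2 = \|\mathscr{C}_{\varphi_{\mathbf{c}}}\|^2$; subordination squeezes $\|\mathscr{C}_{\varphi_{\mathbf{c}}} f_k\|^2 \to N^2$ as well, so $\sum_{m \geq 1} |a_m(f_k)|^2 (M_m(\mathbf{c}) - M_m(\mathbf{b})) \to 0$. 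Everything then reduces to the uniform ratio bound $\sup_{m \geq 1} M_m(\mathbf{b})/M_m(\mathbf{c}) < 1$: granting it, the previous limit forces $\sum_{m \geq 1} |a_m(f_k)|^2 M_m(\mathbf{b}) \to 0$, whence $\|\mathscr{C}_{\varphi_{\mathbf{b}}} f_k\|^2 = |f_k(c)|^2 + \sum_{m \geq 1} |a_m(f_k)|^2 M_m(\mathbf{b})$ has limit at most $\zeta(2\mre c) < N^2$, using $|f_k(c)|^2 \leq \zeta(2\mre c)$ together with the strict bound $N^2 > \zeta(2\mre c)$ from Theorem~\ref{thm:pointeval} (as $\varphi_{\mathbf{b}}$ is nonconstant). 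This contradiction gives (c) $\Rightarrow$ (a).

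The uniform ratio bound is where I expect the genuine difficulty. Each individual ratio is $< 1$ by the strict case of Lemma~\ref{lem:convex}, so only $\limsup_{m} M_m(\mathbf{b})/M_m(\mathbf{c}) < 1$ is at issue. Applying Laplace's method to $M_m(\mathbf{c}) = \int |L_{\mathbf{c}}^\ast|^{2m}\,dm_\infty$ near the alignment locus, where $|L_{\mathbf{c}}^\ast|$ attains its maximum $r$, suggests $M_m(\mathbf{c}) \sim A_{\mathbf{c}}\, r^{2m}\, m^{-(k_{\mathbf{c}}-1)/2}$, where $k_{\mathbf{c}}$ is the number of nonzero entries of $\mathbf{c}$. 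Since $\mathbf{b} \prec \mathbf{c}$ spreads mass one has $k_{\mathbf{b}} \geq k_{\mathbf{c}}$; if $k_{\mathbf{b}} > k_{\mathbf{c}}$ the ratio tends to $0$, and if $k_{\mathbf{b}} = k_{\mathbf{c}}$ it tends to $A_{\mathbf{b}}/A_{\mathbf{c}}$, reducing the claim to a strict Schur-type comparison of the subexponential constants, i.e. of a Hessian determinant attached to the quadratic form $\tfrac1r\sum_j c_j \theta_j^2 - \tfrac1{r^2}\bigl(\sum_j c_j \theta_j\bigr)^2$. I have verified the two-term case directly, where the ratio tends to $2\sqrt{c_1 c_2}/r < 1$; settling the general determinantal comparison is the remaining step to close the boundary case.
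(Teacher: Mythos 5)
Most of your proposal coincides with the paper's actual proof: your moment formula $\|\mathscr{C}_{\varphi_\mathbf{c}}f\|_{\mathscr{H}^2}^2 = \sum_{m\geq0}|a_m|^2\|L_\mathbf{c}\|_{\mathscr{H}^{2m}}^{2m}$ is the paper's identity \eqref{eq:ncomp} (obtained there from the Taylor expansion at $c$ and the orthogonality of the powers $L_\mathbf{c}^k$ in $\mathscr{H}^2$, rather than via Lemma~\ref{lem:carlesoncomp} and rotation invariance --- the two derivations are equivalent); the subordination inequality and (a)$\implies$(b)$\implies$(c) follow from Lemma~\ref{lem:convex} exactly as you say; your termwise proof of \eqref{eq:effectivesubord}, including the case $d=\infty$, is the same bound $M_m(\mathbf{c})\leq r^{2m-2}M_1(\mathbf{c})$ that the paper gets from monotonicity of the normalized moments; and your interior-case treatment of (c)$\implies$(a) (compactness when $\overline{\mathbb{D}(c,r)}\subseteq\mathbb{C}_{1/2}$, the norm attained at a necessarily nonconstant function because of \eqref{eq:genlower}, then strict termwise inequality) is precisely the paper's argument.

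The genuine gap is the boundary case $\mre{c}-1/2=r$ of (c)$\implies$(a), and you have misdiagnosed it. Your premise that the operator ``is not compact'' there is false whenever $\mathbf{b}$ has at least two nonzero entries --- and the hypotheses force exactly this: a singly supported $\mathbf{b}$ majorizes every element of $\mathscr{L}(d,r)$, so $\mathbf{b}\prec\mathbf{c}$ with $\mathbf{b}$ not a permutation of $\mathbf{c}$ rules it out. Compactness in the boundary case for affine symbols involving at least two primes is the Finet--Queff\'elec--Volberg theorem \cite{FQV04}, which is precisely what the paper invokes; with it, the boundary case collapses into your interior-case argument and no approximate-extremizer device is needed (the paper even gets by with the non-strict bound \eqref{eq:genlower} in place of Theorem~\ref{thm:pointeval}, since it only needs that the norm is not attained at constants). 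Your substitute route hinges on the uniform ratio bound $\sup_{m\geq1}M_m(\mathbf{b})/M_m(\mathbf{c})<1$, and this is exactly the step you leave open: the Laplace asymptotics $M_m(\mathbf{c})\sim A_\mathbf{c}\,r^{2m}m^{-(k_\mathbf{c}-1)/2}$ and, in the critical case $k_\mathbf{b}=k_\mathbf{c}$, the strict comparison of the pseudo-determinants of the quadratic forms $\frac1r\sum_j c_j\theta_j^2-\frac{1}{r^2}\bigl(\sum_j c_j\theta_j\bigr)^2$ under strict majorization are plausible but unproven beyond your two-term check, so as written your proof of (c)$\implies$(a) is incomplete. (Granting that bound, the rest of your squeezing argument is sound, and your appeal to Theorem~\ref{thm:pointeval} is not circular, since its proof rests only on Lemma~\ref{lem:partial}; but the compactness citation renders the whole construction unnecessary.)
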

\begin{proof}
	Let $f$ be any function in $\mathscr{H}^2$. Following the notation of Lemma~\ref{lem:convex}, we write $\varphi_\mathbf{c}(s) = c + L_\mathbf{c}(s)$. We begin by Taylor expanding $f$ at $s=c$ to obtain
	\[\mathscr{C}_{\varphi_\mathscr{c}} f(s) = f(c + L_\mathbf{c}(s)) = \sum_{k=0}^\infty \frac{f^{(k)}(c)}{k!} \big(L_\mathbf{c}(s)\big)^k.\]
	The sequence $(L_\mathbf{c}^k)_{k=0}^\infty$ is orthogonal in $\mathscr{H}^2$, yielding that 
	\begin{equation}\label{eq:ncomp} 
		\|\mathscr{C}_{\varphi_\mathbf{c}} f\|_{\mathscr{H}^2}^2 = \sum_{k=0}^\infty \frac{|f^{(k)}(c)|^2}{(k!)^2} \|L_\mathbf{c}^k\|_{\mathscr{H}^2}^2 = \sum_{k=0}^\infty \frac{|f^{(k)}(c)|^2}{(k!)^2} \|L_\mathbf{c}\|_{\mathscr{H}^{2k}}^{2k}. 
	\end{equation}
	Hence if $\mathbf{b} \prec \mathbf{c}$, then it follows directly from Lemma~\ref{lem:convex} that $\|\mathscr{C}_{\varphi_\mathbf{b}}f \|_{\mathscr{H}^2} \leq \|\mathscr{C}_{\varphi_\mathbf{c}}f\|_{\mathscr{H}^2}$. It is also clear that (a) $\implies$ (b) $\implies$ (c). 
	
	To prove that $(c) \implies (a)$, suppose that $\mathbf{b}$ is not a permutation of $\mathbf{c}$. If $f$ is non-constant, so that there is some $k \geq 1$ for which $f^{(k)}(c) \neq 0$, then Lemma~\ref{lem:convex} actually shows that $\|\mathscr{C}_{\varphi_\mathbf{b}}f \|_{\mathscr{H}^2} < \|\mathscr{C}_{\varphi_\mathbf{c}}f\|_{\mathscr{H}^2}$. Note that the vector $\mathbf{b}$ must have two or more non-zero elements. Therefore $\mathscr{C}_{\varphi_{\mathbf{b}}}$ is a compact operator, by the results of \cite{FQV04}. In particular, $\mathscr{C}_{\varphi_{\mathbf{b}}}$ is norm-attaining. The general lower norm-bound \eqref{eq:genlower} shows that the norm is not attained at a constant function $f$. We conclude that $\|\mathscr{C}_{\varphi_{\mathbf{b}}}\| < \|\mathscr{C}_{\varphi_{\mathbf{c}}}\|$.
	
	It remains to prove \eqref{eq:effectivesubord}. Consider the final sum in \eqref{eq:ncomp} for $k\geq1$. In this case, since either $d<\infty$ or we have summable coefficients, we have that
	\[\|L_\mathbf{c}\|_{\mathscr{H}^{2k}}^{2k} = r^{2k} \left\|\frac{1}{r}\sum_{j=1}^d c_j p_j^{-s}\right\|_{\mathscr{H}^{2k}}^{2k}= r^{2k}\lim_{T\to\infty} \frac{1}{2T} \int_{-T}^T \left|\frac{1}{r}\sum_{j=1}^d c_j p_j^{-it}\right|^{2k}dt.\]
	Since $|r^{-1}L_\mathbf{c}(it)|\leq1$, the integral on the right hand side is non-increasing in $k$. This implies that
	\[\|L_\mathbf{c}\|_{\mathscr{H}^{2k}}^{2k} \leq r^{2k} \frac{\|L_\mathbf{c}\|_{\mathscr{H}^2}^2}{r^2} = \frac{\|\mathbf{c}\|_{\ell^2}^2}{\|\mathbf{c}\|_{\ell^1}^2} r^{2k}.\]
	The proof is completed by noting that if $\varphi(s)=c+r2^{-s}$, then $\|\varphi-c\|_{\mathscr{H}^{2k}}^{2k}=r^{2k}$. 
\end{proof}

We now present a simple proof of \cite[Lem.~3.7]{MPQ18} which also yields a new lower bound that will find use in the next section. It is inspired by an even simpler proof for the case $k=1$, shown to us by Horatio Boedihardjo. 
\begin{lemma}\label{lem:dkzeta} 
	Let $k$ be a non-negative integer. For $\sigma>1$ it holds that
	\[\frac{k!}{(\sigma-1)^k} \left(\zeta(\sigma)-1\right)\leq (-1)^k \frac{d^k}{d\sigma^k} \zeta(\sigma) \leq \frac{k!}{(\sigma-1)^k} \zeta(\sigma).\]
\end{lemma}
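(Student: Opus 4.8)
The plan is to start from the Dirichlet series representation
\[
\zeta(\sigma) = \sum_{n=1}^\infty n^{-\sigma}, \qquad \sigma > 1,
\]
and differentiate term by term. Since the series converges uniformly on $\mre{s}\geq 1+\varepsilon$, differentiation under the summation is justified, and I would compute
\[
\frac{d^k}{d\sigma^k} n^{-\sigma} = (-\log n)^k n^{-\sigma} = (-1)^k (\log n)^k n^{-\sigma}.
\]
This immediately gives the clean formula $(-1)^k \zeta^{(k)}(\sigma) = \sum_{n=1}^\infty (\log n)^k n^{-\sigma}$, with every summand non-negative. So the two-sided bound reduces to comparing $\sum_{n\geq 1}(\log n)^k n^{-\sigma}$ against $k!(\sigma-1)^{-k}$ times $\zeta(\sigma)-1$ and $\zeta(\sigma)$ respectively. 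The $n=1$ term vanishes (since $\log 1 = 0$), which is exactly why the lower comparison uses $\zeta(\sigma)-1 = \sum_{n\geq 2} n^{-\sigma}$ rather than $\zeta(\sigma)$.

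The key quantitative input is the elementary inequality relating $(\log n)^k$ to a power of $n$. The natural tool is the integral representation
\[
\frac{k!}{(\sigma-1)^k} = \int_0^\infty t^k e^{-(\sigma-1)t}\,dt,
\]
which I would exploit by writing, for each fixed $n$,
\[
\frac{(\log n)^k}{n^{\sigma-1}} = \frac{(\log n)^k}{n^{\sigma}} \cdot n = \cdots
\]
more cleanly: I would compare $(\log n)^k$ with the value obtained by integrating. Concretely, the upper bound should follow from the pointwise estimate $(\log n)^k \leq \frac{k!}{(\sigma-1)^k} n^{\sigma-1}$ for each $n\geq 1$, which is just the statement that $x^k e^{-(\sigma-1)x} \leq k!(\sigma-1)^{-k}$ at $x = \log n$ (the maximum of $x\mapsto x^k e^{-ax}$ over $x\geq 0$ is $(k/a)^k e^{-k} \leq k!\,a^{-k}$). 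Multiplying by $n^{-\sigma}$ and summing over $n\geq 1$ yields $(-1)^k\zeta^{(k)}(\sigma)\leq \frac{k!}{(\sigma-1)^k}\zeta(\sigma)$.

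For the lower bound I would instead produce a matching pointwise lower estimate valid for $n\geq 2$, of the form $(\log n)^k \geq \frac{k!}{(\sigma-1)^k}(\text{something})$ summing correctly. The cleanest route is the integral comparison $(\log n)^k \geq k!\int_{?}^{?}\cdots$, or more directly to bound $\sum_{n\geq 2}(\log n)^k n^{-\sigma}$ below by comparison with the integral $\int_2^\infty (\log x)^k x^{-\sigma}\,dx$ after a substitution $u=(\sigma-1)\log x$, which produces an incomplete Gamma integral $\int_{(\sigma-1)\log 2}^\infty u^k e^{-u}\,du$; the desired factor $\frac{k!}{(\sigma-1)^k}(\zeta(\sigma)-1)$ then emerges from bounding this incomplete Gamma function against the full one together with the crude estimate $\zeta(\sigma)-1\leq \sum_{n\geq 2}(\text{comparison terms})$. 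I expect the main obstacle to be precisely this lower bound: unlike the upper bound, which is a clean termwise inequality, the lower direction requires relating $\sum_{n\geq 2}(\log n)^k n^{-\sigma}$ to $\frac{k!}{(\sigma-1)^k}\sum_{n\geq 2}n^{-\sigma}$ with the correct constant, and one must be careful that the comparison works uniformly for all $\sigma>1$ (including $\sigma$ near $1$) and all $k\geq 0$. The base case $k=0$ reads $\zeta(\sigma)-1 \leq \zeta(\sigma)\leq \zeta(\sigma)$, which holds trivially, providing a useful sanity check, and I would verify that the Boedihardjo argument for $k=1$ matches the specialization of whichever clean termwise inequality I settle on.
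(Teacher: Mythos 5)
Your reduction to $(-1)^k\zeta^{(k)}(\sigma)=\sum_{n\geq1}(\log n)^k n^{-\sigma}$ is fine, but the upper bound as you propose it does not close. The pointwise estimate $(\log n)^k\leq \frac{k!}{(\sigma-1)^k}\,n^{\sigma-1}$ is correct, yet multiplying it by $n^{-\sigma}$ gives $(\log n)^k n^{-\sigma}\leq \frac{k!}{(\sigma-1)^k}\,n^{-1}$, and summing over $n\geq1$ produces the harmonic series, not $\frac{k!}{(\sigma-1)^k}\zeta(\sigma)$. Worse, no termwise inequality can work here: $(\log n)^k n^{-\sigma}\leq \frac{k!}{(\sigma-1)^k}n^{-\sigma}$ would require $(\log n)^k\leq \frac{k!}{(\sigma-1)^k}$ for every $n$, which fails for $n$ large. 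The claimed inequality is genuinely global --- the excess of the large-$n$ terms must be offset by the deficit of the small-$n$ terms --- so the ``clean termwise inequality'' you are hoping to settle on does not exist. The lower bound is only sketched, and the step you lean on, bounding $\sum_{n\geq2}(\log n)^k n^{-\sigma}$ below by $\int_2^\infty(\log x)^k x^{-\sigma}\,dx$, is itself not justified: $x\mapsto(\log x)^k x^{-\sigma}$ is increasing for $x<e^{k/\sigma}$, and on that range (which is long when $\sigma$ is near $1$ and $k$ is large) the sum-versus-integral comparison goes in the wrong direction for a lower bound. As written, neither inequality is established.

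The paper's proof handles both directions at once with a single averaging device that you may want to study. For $k\geq1$ one writes $(\log n)^k=k\int_1^n(\log x)^{k-1}x^{-1}\,dx$ and interchanges sum and integral to obtain
\[\sum_{n=1}^\infty\frac{(\log n)^k}{n^\sigma}=k\int_1^\infty\frac{(\log x)^{k-1}}{x^\sigma}\,F_\sigma(x)\,dx,\qquad F_\sigma(x):=x^{\sigma-1}\sum_{n\geq x}n^{-\sigma}.\]
Since $k\int_1^\infty(\log x)^{k-1}x^{-\sigma}\,dx=\frac{k!}{(\sigma-1)^k}$ (your Gamma-integral substitution, used legitimately here), both bounds follow immediately from the uniform sandwich $\zeta(\sigma)-1\leq F_\sigma(x)\leq\zeta(\sigma)$, which the paper proves by identifying the one-sided limits of $F_\sigma$ at the integers with left and right Riemann sums for $\int_j^{j+1}y^{-\sigma}\,dy$. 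In other words, the correct object to estimate pointwise is not $(\log n)^k$ against a power of $n$, but the normalized tail $F_\sigma(x)$ against $\zeta(\sigma)$ and $\zeta(\sigma)-1$; that is the missing idea.
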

\begin{proof}
	The case $k=0$ is obvious. For $k\geq1$ we introduce an integral representation for $(\log{n})^k$ and change the order of summation and integration, to obtain that
	\[\sum_{n=1}^\infty \frac{(\log{n})^k}{n^\sigma} = \sum_{n=1}^\infty \frac{k}{n^\sigma} \int_1^n \frac{(\log{x})^{k-1}}{x}\,dx = k \int_1^\infty \frac{(\log{x})^{k-1}}{x^\sigma}\, F_\sigma(x) \,dx,\]
	where $F_\sigma(x) := x^{\sigma-1} \sum_{n\geq x} n^{-\sigma}$. Computing
	\[k \int_1^\infty \frac{(\log{x})^{k-1}}{x^\sigma}\,dx = \frac{k}{(\sigma-1)^k} \int_0^\infty x^{k-1} e^{-x}\,dx = \frac{k!}{(\sigma-1)^k},\]
	it is sufficient to prove that $\zeta(\sigma)-1 \leq F_\sigma(x) \leq \zeta(\sigma)$ holds for $x\geq1$. Clearly, $x \mapsto F_\sigma(x)$ is increasing on the interval $(m,m+1)$ for every positive integer $m$. Hence we obtain upper and lower bounds of $F_\sigma(x)$ by considering, respectively,
	\begin{align*}
		U_\sigma(m) &:= \lim_{x\to m^-} F_\sigma(x) = m^{\sigma-1} \sum_{n=m}^\infty \frac{1}{n^\sigma} = \sum_{j=1}^\infty \sum_{n=0}^{m-1}\frac{1}{m} \left(j+\frac{n}{m}\right)^{-\sigma},
		\intertext{and}
		L_\sigma(m) &:= \lim_{x\to m^+} F_\sigma(x) = m^{\sigma-1} \sum_{n=m+1}^\infty \frac{1}{n^\sigma} = \sum_{j=1}^\infty \sum_{n=1}^{m}\frac{1}{m} \left(j+\frac{n}{m}\right)^{-\sigma}.
	\end{align*}
	For each $j$, we recognize the inner summands as the left and right Riemann sums with a uniform partition of length $m^{-1}$ for the integral
	\[\int_j^{j+1} y^{-\sigma}\,dy.\]
	Since $y \mapsto y^{-\sigma}$ is decreasing on the interval $(j,j+1)$, a simple geometric argument yields that $U_\sigma(m) \leq U_\sigma(1) = \zeta(\sigma)$ and $L_\sigma(m) \geq L_\sigma(1)= \zeta(\sigma)-1$. 
\end{proof}
\begin{remark}
	Since $y \mapsto y^{-\sigma}$ is convex on $(1,\infty)$, it actually holds that the sequences $(U_\sigma(m))_{m\geq1}$ and $(L_\sigma(m))_{m\geq1}$ are decreasing and increasing, respectively. This is a stronger statement than we require in the proof of Lemma~\ref{lem:dkzeta}. Monotonicity results for Riemann sums of convex and concave functions have probably been rediscovered many times (see e.g.~\cite{BJ00}). 
\end{remark}

The next result is \cite[Thm.~3.8]{MPQ18}. We present a different, but ultimately equivalent, proof that follows our approach to Theorem~\ref{thm:affinesubord}. For certain choices of the parameters, we will improve this estimate in Theorem~\ref{thm:newupper}. 
\begin{theorem}\label{thm:mpq} 
	Let $\varphi(s) = c + r2^{-s}$ with $\mre{c}-1/2\geq r > 0$. Then $\|\mathscr{C}_\varphi\|^2 \leq \zeta(1+\xi)$, where $\xi = (\mre{c}-1/2) + \sqrt{(\mre{c}-1/2)^2-r^2}$. 
\end{theorem}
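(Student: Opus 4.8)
The plan is to turn the exact formula \eqref{eq:ncomp} into a question about the operator norm of an explicit matrix, and then run a Schur test whose weights are tuned by the defining equation of $\xi$, invoking the derivative bounds of Lemma~\ref{lem:dkzeta} only at the very end. Specializing \eqref{eq:ncomp} to $d=1$ and $\mathbf{c}=(r)$, so that $L_\mathbf{c}(s)=r2^{-s}$ and $\|L_\mathbf{c}^k\|_{\mathscr{H}^2}^2=r^{2k}$, I obtain
\[
\|\mathscr{C}_\varphi f\|_{\mathscr{H}^2}^2=\sum_{k=0}^\infty\frac{r^{2k}}{(k!)^2}\,|f^{(k)}(c)|^2,\qquad f\in\mathscr{H}^2.
\]
Each functional $f\mapsto f^{(k)}(c)$ is bounded on $\mathscr{H}^2$, with representer $e_k$, and a direct computation gives $\langle e_j,e_k\rangle=(-1)^{j+k}D_{j+k}$, where $D_m:=\sum_{n\ge1}(\log n)^m n^{-2\mre{c}}=(-1)^m\zeta^{(m)}(2\mre{c})$. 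Hence $\|\mathscr{C}_\varphi\|^2$ is the best constant $M$ in $\sum_k \frac{r^{2k}}{(k!)^2}|f^{(k)}(c)|^2\le M\|f\|_{\mathscr{H}^2}^2$, which by the identity $\|T\|^2=\|TT^\ast\|$ equals the norm on $\ell^2$ of the Gram operator $A_{jk}=\frac{r^{j+k}}{j!k!}(-1)^{j+k}D_{j+k}$. Conjugating by the diagonal unitary $\mathrm{diag}((-1)^j)$ preserves the norm, so it suffices to bound the nonnegative, Hankel-type matrix $\widetilde A_{jk}=\frac{r^{j+k}}{j!k!}D_{j+k}$.

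I would bound $\|\widetilde A\|$ by the Schur test with geometric weights $w_j=\beta^j$, $\beta>0$. The essential manoeuvre is to sum in $k$ \emph{before} estimating anything: the exponential series collapses and shifts the argument of $\zeta$,
\[
\sum_{k=0}^\infty\widetilde A_{jk}\,\beta^k=\frac{r^j}{j!}\sum_{n\ge1}(\log n)^j\,n^{-2\mre{c}}\sum_{k=0}^\infty\frac{(r\beta\log n)^k}{k!}=\frac{r^j}{j!}\,(-1)^j\zeta^{(j)}(2\mre{c}-r\beta),
\]
valid whenever $2\mre{c}-r\beta>1$. Only now do I apply the upper bound of Lemma~\ref{lem:dkzeta} to the shifted derivative $(-1)^j\zeta^{(j)}(2\mre{c}-r\beta)$, which gives
\[
\frac{1}{w_j}\sum_{k=0}^\infty\widetilde A_{jk}\,w_k\le\zeta(2\mre{c}-r\beta)\left(\frac{r}{\beta\,(2\mre{c}-1-r\beta)}\right)^{\!j}.
\]

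To finish I would choose $\beta=r/\xi$. From the defining relation $\xi^2-2(\mre{c}-1/2)\xi+r^2=0$ one reads off $r\beta=r^2/\xi=2(\mre{c}-1/2)-\xi$, so that $2\mre{c}-r\beta=1+\xi$ and $2\mre{c}-1-r\beta=\xi$; consequently the geometric ratio $r/(\beta\xi)$ equals $1$ and the right-hand side above collapses to $\zeta(1+\xi)$ for every $j$. Since $\widetilde A$ is symmetric, the Schur test yields $\|\widetilde A\|\le\zeta(1+\xi)$, and therefore $\|\mathscr{C}_\varphi\|^2\le\zeta(1+\xi)$. Admissibility of this $\beta$ is automatic, since $\mre{c}-1/2\ge r>0$ forces $\xi>0$, hence $2\mre{c}-r\beta=1+\xi>1$.

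The main obstacle is precisely the order in which the two estimates are combined. Applying Lemma~\ref{lem:dkzeta} directly to $D_{j+k}$ at the abscissa $2\mre{c}$ (that is, before summing in $k$) discards the arithmetic content of the $n$-sum, effectively replacing the whole Dirichlet series by $\zeta(2\mre{c})$, and produces a strictly larger constant. The sharp value $1+\xi$ emerges only because carrying out the $k$-summation first transfers the free parameter $\beta$ into a genuine shift of the argument of $\zeta$, after which the optimal shift is dictated by the quadratic satisfied by $\xi$.
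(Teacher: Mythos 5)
Your argument is correct: the identification $\|\mathscr{C}_\varphi\|^2=\|A\|_{\ell^2\to\ell^2}$ via \eqref{eq:ncomp} and the representers $e_k$ is legitimate (it suffices to verify $\langle \widetilde A x,x\rangle\le \zeta(1+\xi)\|x\|^2$ on finitely supported $x$, and all sum interchanges are justified by nonnegativity), the Schur-test computation with $w_j=\beta^j$ is accurate, and the algebra $r^2/\xi=2\mre{c}-1-\xi$ does make the geometric ratio collapse to $1$. The paper proves the same bound from the same starting point \eqref{eq:ncomp} and with the same key Lemma~\ref{lem:dkzeta}, but runs the weighted Cauchy--Schwarz in the Dirichlet-coefficient variable $n$ rather than in the Taylor index: it splits $|f^{(k)}(c)|^2\le \bigl(\sum_m (\log m)^k m^{-(\mre{c}+1/2+\eta)}\bigr)\bigl(\sum_n |a_n|^2(\log n)^k n^{-(\mre{c}-1/2-\eta)}\bigr)$, applies Lemma~\ref{lem:dkzeta} to the first factor at the abscissa $\mre{c}+1/2+\eta$, then sums over $k$ and chooses $\eta=\sqrt{(\mre{c}-1/2)^2-r^2}$ to make the resulting exponent of $n$ vanish. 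The two proofs are dual Schur tests -- yours on the Gram matrix $BB^\ast$ of the rectangular array $B_{jn}=\frac{r^j}{j!}(-\log n)^j n^{-c}$ with weight $(r/\xi)^j$, the paper's a two-weight test on $B$ itself with weights $m^{-(1/2+\eta)}$ and $(r/\xi)^j$ -- and both end up invoking Lemma~\ref{lem:dkzeta} at the same abscissa $1+\xi$, so they produce identical constants. What your version buys is conceptual transparency (the norm is literally the norm of an explicit Hankel-type matrix, and the parameter $\beta$ enters as a genuine shift of the argument of $\zeta$); what the paper's version buys is brevity and the avoidance of any duality or operator-matrix identification. One small caveat on your closing paragraph: the ``wrong order'' you warn against (estimating $D_{j+k}$ at abscissa $2\mre{c}$ before summing in $k$) is a pitfall of the Gram-matrix route specifically; the paper's proof never faces it, because its free parameter $\eta$ shifts the abscissa through the Cauchy--Schwarz weight rather than through the $k$-summation.
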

\begin{proof}
	Let $f(s)=\sum_{n\geq1} a_n n^{-s}$. We combine the Cauchy--Schwarz inequality, for some parameter $\eta>0$ to be chosen later, and Lemma~\ref{lem:dkzeta}, to obtain 
	\begin{multline*}
		|f^{(k)}(c)|^2 \leq \sum_{m=1}^\infty \frac{(\log{m})^k}{m^{\mre{c}+1/2+\eta}} \sum_{n=1}^\infty |a_n|^2 \frac{(\log{n})^k}{n^{\mre{c}-1/2-\eta}} \\
		\leq \frac{k!}{(\mre{c}-1/2+\eta)^k} \zeta(\mre{c}+1/2+\eta) \sum_{n=1}^\infty |a_n|^2 \frac{(\log{n})^k}{n^{\mre{c}-1/2-\eta}}. 
	\end{multline*}
	We insert this estimate into \eqref{eq:ncomp} and note that in the case $\varphi_{\mathbf{c}}(s)=c+r2^{-s}$ we have $\|L_\mathbf{c}\|_{\mathscr{H}^{2k}}^{2k} = r^{2k}$. After changing the order of summation, we find that 
	\begin{multline*}
		\|\mathscr{C}_\varphi f\|_{\mathscr{H}^2}^2 \leq \zeta(\mre{c}+1/2+\eta) \sum_{n=1}^\infty |a_n|^2 \frac{1}{n^{\mre{c}-1/2-\eta}} \sum_{k=0}^\infty \frac{r^{2k}}{k!} \frac{(\log{n})^k}{(\mre{c}-1/2+\eta)^k} \\
		= \zeta(\mre{c}+1/2+\eta) \sum_{n=1}^\infty |a_n|^2 n^{\frac{r^2}{\mre{c}-1/2+\eta} -(\mre{c}-1/2) + \eta}. 
	\end{multline*}
	The proof is completed by letting $\eta = \sqrt{(\mre{c}-1/2)^2-r^2}$. 
\end{proof}

We can combine Theorem~\ref{thm:affinesubord} and Theorem~\ref{thm:mpq} with the Cauchy--Schwarz inequality to obtain the following result. 
\begin{corollary}\label{cor:combo} 
	Suppose that $\varphi(s) = c + \sum_{j\geq1} c_j p_j^{-s}$ is in $\mathscr{G}$. If $r=\sum_{j\geq1}|c_j|$ and $\xi = (\mre{c}-1/2) + \sqrt{(\mre{c}-1/2)^2-r^2}$, then
	\[\|\mathscr{C}_\varphi\|^2 \leq \left(1-\frac{\sum_{j\geq1}|c_j|^2}{r^2}\right)\zeta(2\mre{c}) + \frac{\sum_{j\geq1}|c_j|^2}{r^2} \zeta(1+\xi).\]
\end{corollary}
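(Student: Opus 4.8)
The plan is to feed the two ingredients we already possess, the effective subordination estimate \eqref{eq:effectivesubord} from Theorem~\ref{thm:affinesubord} and the upper bound of Theorem~\ref{thm:mpq}, into the reproducing kernel structure of $\mathscr{H}^2$.

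First, as explained just before Theorem~\ref{thm:affinesubord}, I would replace $\varphi$ by a suitable vertical limit $\varphi_\chi$ so that every coefficient $c_j$ becomes non-negative. This alters neither $r = \sum_{j\geq1} |c_j| = \|\mathbf{c}\|_{\ell^1}$, nor $\sum_{j\geq1} |c_j|^2 = \|\mathbf{c}\|_{\ell^2}^2$, nor the norm $\|\mathscr{C}_\varphi f\|_{\mathscr{H}^2}$ for any $f \in \mathscr{H}^2$. With this normalization the constant appearing in \eqref{eq:effectivesubord}, namely $C = \|\mathbf{c}\|_{\ell^2}^2/\|\mathbf{c}\|_{\ell^1}^2$, is exactly $\sum_{j\geq1}|c_j|^2/r^2$. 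Writing $\psi(s) = c + r2^{-s}$ and invoking \eqref{eq:effectivesubord}, which is valid even when $d = \infty$, yields for every $f \in \mathscr{H}^2$ the estimate
\[\|\mathscr{C}_\varphi f\|_{\mathscr{H}^2}^2 \leq (1-C)\,|f(c)|^2 + C\,\|\mathscr{C}_\psi f\|_{\mathscr{H}^2}^2.\]

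Next I would bound the two terms on the right separately by $\|f\|_{\mathscr{H}^2}^2$. For the point evaluation, the Cauchy--Schwarz inequality against the reproducing kernel gives $|f(c)|^2 \leq K_c(c)\,\|f\|_{\mathscr{H}^2}^2 = \zeta(2\mre{c})\,\|f\|_{\mathscr{H}^2}^2$, using the formula $K_c(c) = \zeta(c + \overline{c}) = \zeta(2\mre{c})$ recorded in Section~\ref{sec:vertical}. For the second term, Theorem~\ref{thm:mpq} furnishes $\|\mathscr{C}_\psi f\|_{\mathscr{H}^2}^2 \leq \zeta(1+\xi)\,\|f\|_{\mathscr{H}^2}^2$ with the stated value of $\xi$. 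Combining the two bounds and taking the supremum over all $f$ with $\|f\|_{\mathscr{H}^2} = 1$ then produces precisely the asserted inequality.

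I do not expect a serious obstacle here: the corollary is essentially a repackaging of \eqref{eq:effectivesubord} once its two summands are individually controlled. The only points requiring a moment's attention are the reduction to $c_j \geq 0$ and the remark that \eqref{eq:effectivesubord} applies in the case $d = \infty$, so that a possibly infinite coefficient sequence causes no trouble; both $\|\mathbf{c}\|_{\ell^1}$ and $\|\mathbf{c}\|_{\ell^2}$ are finite because membership of $\varphi$ in $\mathscr{G}$ forces $r < \infty$ by Lemma~\ref{lem:affinemap}.
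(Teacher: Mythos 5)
Your proposal is correct and follows exactly the route the paper intends: the paper gives no written proof beyond the remark that the corollary follows by combining the estimate \eqref{eq:effectivesubord} of Theorem~\ref{thm:affinesubord} (valid for $d=\infty$) with Theorem~\ref{thm:mpq} and the Cauchy--Schwarz bound $|f(c)|^2\leq \zeta(2\mre{c})\|f\|_{\mathscr{H}^2}^2$, which is precisely what you do. The normalization to $c_j\geq 0$ and the finiteness of $\|\mathbf{c}\|_{\ell^1}$ are handled correctly, so there is nothing to add.
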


We end this section by specializing Corollary~\ref{cor:combo} to the symbols $\varphi_{\mathbf{c}}$, $\mathbf{c} \in L(d,r)$, of minimal norm, see \eqref{eq:seqdom} and Theorem~\ref{thm:affinesubord}. 
\begin{corollary}\label{cor:smallnorm} 
	Let $\varphi(s) = c + (r/d)\sum_{j=1}^d p_j^{-s}$ with $\mre{c}-1/2\geq r >0$. Then
	\[\|\mathscr{C}_\varphi\|^2 \leq \zeta(2\mre{c})\left(1+\frac{1}{d}\right).\]
\end{corollary}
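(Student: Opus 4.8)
The plan is to specialize Corollary~\ref{cor:combo} to the symbol at hand and then reduce the result to a single scalar inequality for the Riemann zeta function. For $\varphi(s) = c + (r/d)\sum_{j=1}^d p_j^{-s}$ the coefficients are $c_j = r/d$ for $1\le j\le d$, so $\sum_{j\ge1}|c_j| = r$ (confirming $\varphi\in\mathscr{G}$, so that Corollary~\ref{cor:combo} applies) while $\sum_{j\ge1}|c_j|^2/r^2 = 1/d$. Inserting this into Corollary~\ref{cor:combo} gives
\[\|\mathscr{C}_\varphi\|^2 \le \left(1 - \tfrac1d\right)\zeta(2\mre c) + \tfrac1d\,\zeta(1+\xi),\]
with $\xi = (\mre c - 1/2) + \sqrt{(\mre c - 1/2)^2 - r^2}$. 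Comparing with the claimed bound $\zeta(2\mre c)(1 + 1/d)$, the difference is exactly $\tfrac1d\big(2\zeta(2\mre c) - \zeta(1+\xi)\big)$, so it suffices to establish the pointwise inequality $\zeta(1+\xi) \le 2\zeta(2\mre c)$.

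Next I would remove the dependence on $r$. Since $\xi \ge \mre c - 1/2$, we have $1 + \xi \ge \mre c + 1/2 > 1$, and as $\zeta$ is decreasing on $(1,\infty)$ it suffices to treat the worst case $\zeta(\mre c + 1/2) \le 2\zeta(2\mre c)$. Writing $s = \mre c - 1/2 > 0$, this is precisely
\[\zeta(1+s) \le 2\zeta(1+2s), \qquad s > 0,\]
a self-contained inequality that I would record as a short lemma.

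The cleanest route is to subtract the principal part of the pole. Set $\tilde\zeta(\sigma) := \zeta(\sigma) - (\sigma-1)^{-1}$; the pole contributions to $\zeta(1+s)$ and $2\zeta(1+2s)$ are both equal to $1/s$ and cancel, so the inequality is equivalent to $\tilde\zeta(1+s) \le 2\tilde\zeta(1+2s)$. This in turn follows from the two-sided bound $\tfrac12 \le \tilde\zeta(\sigma) \le 1$ for $\sigma > 1$, since then $\tilde\zeta(1+s) \le 1 = 2\cdot\tfrac12 \le 2\tilde\zeta(1+2s)$. To obtain these bounds I would use the Euler--Maclaurin representation
\[\tilde\zeta(\sigma) = 1 - \sigma\int_1^\infty \frac{\{x\}}{x^{\sigma+1}}\,dx = \frac12 + \sigma\int_1^\infty\frac{\tfrac12 - \{x\}}{x^{\sigma+1}}\,dx,\]
valid for $\sigma > 0$, where $\{x\}$ is the fractional part. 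The upper bound $\tilde\zeta(\sigma)\le 1$ is immediate from the first expression, as $\{x\}\ge0$.

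The main obstacle is the lower bound $\tilde\zeta(\sigma)\ge 1/2$, that is, the nonnegativity of the signed integral in the second expression. I would prove this one period at a time: on $[n, n+1)$ the sawtooth $h(x) = \tfrac12 - \{x\}$ is antisymmetric about the midpoint $n+1/2$ (positive on the left half, negative on the right), so the substitution $x = n + 1/2 + t$ yields
\[\int_n^{n+1} \frac{h(x)}{x^{\sigma+1}}\,dx = \int_0^{1/2} t\,\big[(n+\tfrac12-t)^{-\sigma-1} - (n+\tfrac12+t)^{-\sigma-1}\big]\,dt \ge 0,\]
the bracket being nonnegative because $x\mapsto x^{-\sigma-1}$ is decreasing. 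Summing over $n\ge1$ gives $\tilde\zeta(\sigma)\ge 1/2$, which completes the chain $\zeta(1+s)\le 1 + 1/s \le 2\zeta(1+2s)$ and hence the corollary.
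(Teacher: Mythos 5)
Your argument is correct, and its skeleton coincides with the paper's: both specialize Corollary~\ref{cor:combo} with $c_j=r/d$, use $\xi\geq\mre{c}-1/2$ together with the monotonicity of $\zeta$ to discard $r$, and reduce the corollary to the single scalar inequality $\zeta(\mre{c}+1/2)\leq 2\zeta(2\mre{c})$, i.e. $\zeta(1+s)\leq 2\zeta(1+2s)$ for $s=\mre{c}-1/2>0$. The only divergence is how that inequality is settled. The paper rewrites it as the statement that $\sigma\mapsto(\sigma-1)\zeta(\sigma)$ is increasing and gets this for free from the upper estimate of Lemma~\ref{lem:dkzeta} with $k=1$ (namely $-\zeta'(\sigma)\leq\zeta(\sigma)/(\sigma-1)$), so the whole step is a one-line appeal to machinery already in place. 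You instead prove the two-sided bound $\tfrac12\leq\zeta(\sigma)-(\sigma-1)^{-1}\leq1$ via Euler--Maclaurin, with the lower bound obtained from the period-by-period antisymmetry of the sawtooth $\tfrac12-\{x\}$ against the decreasing weight $x^{-\sigma-1}$; this is self-contained and elementary, at the cost of redoing work that Lemma~\ref{lem:dkzeta} already encapsulates (indeed, your Riemann-sum-style comparison on each interval $[n,n+1)$ is close in spirit to the argument in that lemma's proof). All of your intermediate steps check out, including the cancellation of the pole terms $1/s$ on both sides and the verification that the bracket $(n+\tfrac12-t)^{-\sigma-1}-(n+\tfrac12+t)^{-\sigma-1}$ is nonnegative.
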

\begin{proof}
	From Corollary~\ref{cor:combo} and the fact that $\xi \geq \mre{c}-1/2$, we obtain
	\[\|\mathscr{C}_\varphi\|^2 \leq \zeta(2\mre{c})\left(1+\frac{1}{d}\left(\frac{\zeta(1/2+\mre{c})}{\zeta(2\mre{c})}-1\right)\right).\]
	The upper estimate of Lemma~\ref{lem:dkzeta} with $k=1$ implies that $\sigma \mapsto (\sigma-1)\zeta(\sigma)$ is increasing. Hence
	\[\frac{\zeta(1/2+\mre{c})}{\zeta(2\mre{c})} = 2 \frac{(\mre{c}-1/2)\zeta(1/2+\mre{c})}{(2\mre{c}-1)\zeta(2\mre{c})} \leq 2,\]
	yielding the statement. 
\end{proof}

Lemma~\ref{lem:affinemap} and Corollary~\ref{cor:smallnorm} demonstrate that the norm of a composition operator on $\mathscr{H}^2$ may be made arbitrarily close to the general lower bound \eqref{eq:genlower} without restricting $\varphi(\mathbb{C}_0^\ast)$. We shall see in the next section that the general lower bound $\|\mathscr{C}_\varphi\|^2\geq \zeta(2\mre{c})$ can never be attained unless $\varphi \equiv c$. Note that
\[\varphi(s) = c + \frac{r}{d}\sum_{j=1}^d p_j^{-s}\]
actually converges to $\varphi \equiv c$ in $\mathscr{H}^2$ as $d \to \infty$.

\section{Partial reproducing kernels} \label{sec:rpk} 
The partial reproducing kernel of $\mathscr{H}^2$ generated by $\Lambda \subseteq \mathbb{N}$ is defined by
\[K_w^{\Lambda}(s) = \zeta_{\Lambda}(s+\overline{w}) := \sum_{n\in\Lambda} n^{-s-\overline{w}}.\]
Let $\mathscr{H}^2_{\Lambda}$ denote the corresponding subspace of $\mathscr{H}^2$,
\[\mathscr{H}^2_{\Lambda} := \left\{f \in \mathscr{H}^2 \, : \, f(s) = \sum_{n\in\Lambda} a_n n^{-s}\right\},\]
and let $\sigma(\Lambda)$ denote the abscissa of (absolute) convergence of $\zeta_\Lambda$. If $\Lambda$ is an infinite set, then $0\leq \sigma(\Lambda)\leq 1$. Note that the elements of $\mathscr{H}^2_\Lambda$ are absolutely convergent in $\mathbb{C}_{\sigma(\Lambda)/2}$, by the Cauchy--Schwarz inequality. Moreover, $K^\Lambda_w$ is the reproducing kernel at $w \in \mathbb{C}_{\sigma(\Lambda)/2}$ of the Hilbert space $\mathscr{H}^2_\Lambda$, from which it follows that
\[\|K_w^\Lambda\|_{\mathscr{H}^2_\Lambda}^2 = K_w^\Lambda(w) = \zeta_\Lambda(2\mre{w}).\]
Let $\operatorname{mult}(\Lambda)$ denote the smallest set which contains $\Lambda$ and is closed under multiplication. The following basic lemma is crucial. 
\begin{lemma}\label{lem:partial} 
	Let $\varphi(s) = \sum_{n \in \Lambda'} c_n n^{-s}$ be in $\mathscr{G}$ and set $\Lambda = \operatorname{mult}(\Lambda')$. Then
	\[\|\mathscr{C}_\varphi\|^2 \geq \sup_{\mre{w}>\sigma(\Lambda)/2} \frac{\zeta(2 \mre \varphi(w))}{\zeta_\Lambda(2\mre{w})}.\]
\end{lemma}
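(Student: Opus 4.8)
The plan is to bound $\|\mathscr{C}_\varphi\| = \|\mathscr{C}_\varphi^\ast\|$ from below by testing the adjoint against the partial reproducing kernels $K_w^\Lambda$, in the spirit of the usual reproducing-kernel lower bound for composition operators. The feature that produces the sharper denominator $\zeta_\Lambda$ in place of $\zeta$ is the structural observation that $\mathscr{C}_\varphi$ does not merely map $\mathscr{H}^2$ into itself, but in fact into the proper subspace $\mathscr{H}^2_\Lambda$, with $\Lambda = \operatorname{mult}(\Lambda')$. I would record this as the first and decisive step.

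To verify that $\mathscr{C}_\varphi$ maps into $\mathscr{H}^2_\Lambda$, it suffices, by density of the Dirichlet polynomials and boundedness of $\mathscr{C}_\varphi$ (which holds since $\varphi \in \mathscr{G}$), to check it on monomials $f(s) = m^{-s}$. For such $f$ we have
\[\mathscr{C}_\varphi f(s) = m^{-\varphi(s)} = \exp\Bigl(-(\log m)\sum_{n\in\Lambda'} c_n n^{-s}\Bigr),\]
and expanding the exponential in its power series produces only terms whose frequencies are finite products of integers drawn from $\Lambda'$, hence lie in $\operatorname{mult}(\Lambda') = \Lambda$. Since $\mathscr{C}_\varphi f$ is already known to lie in $\mathscr{H}^2$, it therefore lies in the closed subspace $\mathscr{H}^2_\Lambda$; the general case follows by linearity and passage to the limit.

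With this in hand, fix $w$ with $\mre w > \sigma(\Lambda)/2$ (so in particular $w \in \mathbb{C}_0$), so that $K_w^\Lambda$ is the reproducing kernel of $\mathscr{H}^2_\Lambda$ at $w$; since $\varphi \in \mathscr{G}$ we also have $\varphi(w) \in \mathbb{C}_{1/2}$, so $K_{\varphi(w)}$ is defined. For every $f \in \mathscr{H}^2$, using crucially that $\mathscr{C}_\varphi f \in \mathscr{H}^2_\Lambda$ so that $K_w^\Lambda$ reproduces its value at $w$,
\[\langle f, \mathscr{C}_\varphi^\ast K_w^\Lambda\rangle = \langle \mathscr{C}_\varphi f, K_w^\Lambda\rangle = (\mathscr{C}_\varphi f)(w) = f(\varphi(w)) = \langle f, K_{\varphi(w)}\rangle,\]
whence $\mathscr{C}_\varphi^\ast K_w^\Lambda = K_{\varphi(w)}$. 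Recalling $\|K_w^\Lambda\|_{\mathscr{H}^2}^2 = K_w^\Lambda(w) = \zeta_\Lambda(2\mre w)$ and $\|K_{\varphi(w)}\|_{\mathscr{H}^2}^2 = \zeta(2\mre\varphi(w))$, this gives
\[\|\mathscr{C}_\varphi\|^2 = \|\mathscr{C}_\varphi^\ast\|^2 \geq \frac{\bigl\|\mathscr{C}_\varphi^\ast K_w^\Lambda\bigr\|_{\mathscr{H}^2}^2}{\bigl\|K_w^\Lambda\bigr\|_{\mathscr{H}^2}^2} = \frac{\|K_{\varphi(w)}\|_{\mathscr{H}^2}^2}{\|K_w^\Lambda\|_{\mathscr{H}^2}^2} = \frac{\zeta(2\mre\varphi(w))}{\zeta_\Lambda(2\mre w)},\]
and taking the supremum over admissible $w$ finishes the proof.

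The only genuinely non-routine ingredient is the first step: recognizing that $\operatorname{mult}(\Lambda')$ captures exactly the frequency support of $\mathscr{C}_\varphi f$, which is what licenses replacing the full kernel $K_w$ by the smaller partial kernel $K_w^\Lambda$, and hence the denominator $\zeta$ by $\zeta_\Lambda$. The adjoint identity $\mathscr{C}_\varphi^\ast K_w^\Lambda = K_{\varphi(w)}$ and the two norm evaluations are then entirely standard.
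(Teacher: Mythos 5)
Your proposal is correct and follows essentially the same route as the paper: establish that $\mathscr{C}_\varphi$ maps $\mathscr{H}^2$ into $\mathscr{H}^2_\Lambda$ via the power-series expansion of $n^{-\varphi(s)}$ (the paper does this for general $f$ using absolute convergence in $\mathbb{C}_{1/2}$ rather than on monomials plus density, but the idea is identical), derive $\mathscr{C}_\varphi^\ast K_w^\Lambda = K_{\varphi(w)}$ from the reproducing property, and test the adjoint on the partial kernels. No gaps.
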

\begin{proof}
	Since $\varphi \in \mathscr{G}$ we know that $\mathscr{C}_\varphi f = f \circ \varphi$ is in $\mathscr{H}^2$ for every $f \in \mathscr{H}^2$. Let $\mre{s}>1/2$. By the absolute convergence of $f \circ \varphi$ and the computation
	\[n^{-\varphi(s)} = \sum_{j=0}^\infty \frac{(-\log{n})^j}{j!} (\varphi(s))^j, \qquad n=1,2,\ldots,\]
	we note that $\mathscr{C}_\varphi f$ is in $\mathscr{H}^2_\Lambda$, since $\Lambda = \operatorname{mult}(\Lambda')$. Hence we may consider $\mathscr{C}_\varphi$ as a bounded operator $\mathscr{C}_\varphi \colon \mathscr{H}^2 \to \mathscr{H}^2_\Lambda$ and let $\mathscr{C}_\varphi^\ast\colon \mathscr{H}^2_\Lambda \to \mathscr{H}^2$ denote its adjoint. If $f$ is in $\mathscr{H}^2$ and $\mre{w}>\sigma(\Lambda)/2$ we have that
	\[\langle f, K_{\varphi(w)} \rangle_{\mathscr{H}^2} = f(\varphi(w)) = \langle \mathscr{C}_\varphi f, K_w^\Lambda \rangle_{\mathscr{H}^2_\Lambda} = \langle f, \mathscr{C}_\varphi^\ast K_w^\Lambda \rangle_{\mathscr{H}^2},\]
	and hence $\mathscr{C}_\varphi^\ast K_w^\Lambda = K_{\varphi(w)}$. Using this identity, we obtain the desired estimate,
	\[\|\mathscr{C}_\varphi\|^2 = \|\mathscr{C}_\varphi^\ast\|^2 \geq \sup_{\mre{w}> \sigma(\Lambda)/2} \frac{\|\mathscr{C}_\varphi^\ast K_w^{\Lambda}\|_{\mathscr{H}^2}^2}{\|K_w^\Lambda\|_{\mathscr{H}^2_\Lambda}^2} = \sup_{\mre{w}>\sigma(\Lambda)/2} \frac{\zeta(2\mre{\varphi(w)})}{\zeta_\Lambda(2\mre{w})}. \qedhere\]
\end{proof}

Our first application of Lemma~\ref{lem:partial} is to prove that the general lower bound \eqref{eq:genlower} is not attained unless $\varphi$ is identically constant. This result will be needed in Section~\ref{sec:inner}. 
\begin{theorem}\label{thm:pointeval} 
	Suppose that $\varphi \in \mathscr{G}$ is a non-constant Dirichlet series. Then $\|\mathscr{C}_\varphi \|^2 > \zeta(2\mre{c_1})$, where $c_1 = \varphi(+\infty)$. 
\end{theorem}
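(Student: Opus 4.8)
**The plan is to use the partial reproducing kernel machinery of Lemma~\ref{lem:partial}, applied to the multiplicative closure generated by the support of $\varphi$, and to show that the supremum there strictly exceeds $\zeta(2\mre{c_1})$.** Write $\varphi(s) = c_1 + \sum_{n \geq 2} c_n n^{-s}$ with the understanding that $\varphi$ is non-constant, so at least one $c_n$ with $n \geq 2$ is nonzero. Let $\Lambda' = \{n : c_n \neq 0\}$ and $\Lambda = \operatorname{mult}(\Lambda')$. Lemma~\ref{lem:partial} gives
\[
\|\mathscr{C}_\varphi\|^2 \geq \sup_{\mre{w} > \sigma(\Lambda)/2} \frac{\zeta(2\mre{\varphi(w)})}{\zeta_\Lambda(2\mre{w})}.
\]
The strategy is to let $w = \sigma \to +\infty$ along the real axis and analyze the leading-order behavior of numerator and denominator as $\sigma \to \infty$, showing that for large $\sigma$ the ratio exceeds $\zeta(2\mre{c_1})$.

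**The key computation is the asymptotic expansion as $\sigma \to +\infty$.** Since $\varphi(\sigma) \to c_1$, continuity gives $\zeta(2\mre{\varphi(\sigma)}) \to \zeta(2\mre{c_1})$ from the numerator, so I must extract the first-order correction and compare it against the denominator $\zeta_\Lambda(2\sigma) = 1 + O(N^{-2\sigma})$, where $N = \min(\Lambda \setminus \{1\}) \geq 2$. Let $m \geq 2$ be the smallest index in $\Lambda'$ with $c_m \neq 0$; then $\varphi(\sigma) = c_1 + c_m m^{-\sigma} + o(m^{-\sigma})$, so to leading order
\[
\mre{\varphi(\sigma)} = \mre{c_1} + \mre(c_m) m^{-\sigma} + o(m^{-\sigma}).
\]
Writing $g(\sigma) = \zeta(2\mre{\varphi(\sigma)})$ and differentiating, the first-order term is governed by $\zeta'(2\mre{c_1}) \cdot 2\,\mre(c_m) m^{-\sigma}$, which is \emph{negative} since $\zeta' < 0$ on $(1,\infty)$; this is the dangerous case, since a negative correction in the numerator could sink the ratio below $\zeta(2\mre{c_1})$ if the denominator's correction is not even more negative. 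I expect this sign analysis to be the crux of the argument.

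**The main obstacle is handling the possibility that the first-order real part $\mre(c_m)$ is positive (or that $c_m$ is complex), which would depress the numerator.** The resolution is exactly the freedom to replace $\varphi$ by a vertical limit $\varphi_\chi$, as emphasized repeatedly in the excerpt: by Lemma~\ref{lem:compchi} this preserves $\varphi(\mathbb{C}_0^\ast)$, preserves $c_1 = \varphi_\chi(+\infty)$, and preserves $\|\mathscr{C}_\varphi f\|_{\mathscr{H}^2}$ for every $f$, hence preserves $\|\mathscr{C}_\varphi\|$ and $\Lambda$. Choosing $\chi$ so that the leading coefficient $c_m \chi(m)$ becomes a \emph{negative} real number, I arrange $\mre(c_m\chi(m)) = -|c_m| < 0$, which makes $\mre{\varphi_\chi(\sigma)} = \mre{c_1} + |c_m| m^{-\sigma} + o(m^{-\sigma})$ strictly larger than $\mre{c_1}$ for large $\sigma$. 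Since $\zeta$ is strictly decreasing on $(1,\infty)$, the numerator then strictly \emph{exceeds} $\zeta(2\mre{c_1})$ with a correction of exact order $m^{-\sigma}$, while the denominator $\zeta_\Lambda(2\sigma) = 1 + O(N^{-2\sigma})$ tends to $1$ with an error of order $N^{-2\sigma}$.

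**The final step is to compare the orders of the two corrections.** Since $m \geq 2$ and $N \geq 2$, the numerator's positive correction is $\asymp m^{-\sigma}$ while the denominator's deviation from $1$ is $\asymp N^{-2\sigma} \leq N^{-2\sigma}$, which decays at least as fast as $m^{-2\sigma}$ — strictly faster than $m^{-\sigma}$. Therefore, for all sufficiently large $\sigma$,
\[
\frac{\zeta(2\mre{\varphi_\chi(\sigma)})}{\zeta_\Lambda(2\sigma)} = \frac{\zeta(2\mre{c_1}) + 2|c_m|\,|\zeta'(2\mre{c_1})|\,m^{-\sigma} + o(m^{-\sigma})}{1 + O(m^{-2\sigma})} > \zeta(2\mre{c_1}),
\]
where I have used $\zeta'(2\mre{c_1}) < 0$ to see the numerator's correction is genuinely positive. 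Combined with Lemma~\ref{lem:partial} applied to $\varphi_\chi$, this yields $\|\mathscr{C}_\varphi\|^2 = \|\mathscr{C}_{\varphi_\chi}\|^2 > \zeta(2\mre{c_1})$, completing the proof. The one routine point to verify carefully is that $\sigma > \sigma(\Lambda)/2$ for $\sigma$ large (immediate, since $\sigma(\Lambda) \leq 1$), so that the test points lie in the admissible range.
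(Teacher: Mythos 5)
Your proposal is correct and follows essentially the same route as the paper: apply Lemma~\ref{lem:partial} along the real axis $\sigma\to\infty$, use a vertical limit to rotate the leading coefficient so that $c_m\chi(m)=-|c_m|$, and observe that the numerator's positive correction is of exact order $m^{-\sigma}$ while the denominator's deviation from $1$ is of order $m^{-2\sigma}$ (the paper phrases this as eventual monotonicity of the ratio via logarithmic differentiation and uses the slightly larger set $\Lambda=\{1\}\cup\{n: n\geq m\}$, but the computation is identical). One sign slip in your prose: with $c_m\chi(m)=-|c_m|$ you get $\mre{\varphi_\chi(\sigma)}=\mre{c_1}-|c_m|m^{-\sigma}+o(m^{-\sigma})$, which is strictly \emph{smaller} than $\mre{c_1}$ — that is precisely what makes the decreasing function $\zeta$ exceed $\zeta(2\mre{c_1})$; your final displayed expansion carries the correct sign, so the argument stands.
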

\begin{proof}
	Since $\varphi$ is not identically constant, there is an integer $m\geq2$ such that
	\[\varphi(s) = c_1 + \sum_{n=m}^\infty c_n n^{-s}\]
	and $c_m \neq 0$. We may assume that $c_m<0$ by a vertical translation. Let $\Lambda = \{1\}\cup \{n\,:\, n \geq m\}$. By Lemma~\ref{lem:partial} we have that
	\[\|\mathscr{C}_\varphi\|^2 \geq \sup_{1/2 < \sigma < \infty} \frac{\zeta(2\mre{\varphi(\sigma)})}{\zeta_\Lambda(2\sigma)}.\]
	Letting $\sigma \to \infty$ yields the lower bound $\|\mathscr{C}_\varphi\|^2 \geq \zeta(2\mre{c_1})$. Hence it is sufficient to prove that $\sigma \mapsto {\zeta(2\mre{\varphi(\sigma)})}/{\zeta_\Lambda(2\sigma)}$ is eventually decreasing. Logarithmic differentiation leads us to verify that 
	\begin{equation}\label{eq:logdiff} 
		-2 \mre{\varphi'(\sigma)} \frac{\zeta'(2\mre{\varphi(\sigma)})}{\zeta(2\mre{\varphi(\sigma)})} \geq -2 \frac{\zeta_\Lambda'(2\sigma)}{\zeta_\Lambda(2\sigma)} 
	\end{equation}
	holds for all sufficiently large $\sigma$. We now note, since $\zeta'(2\mre{c_1}) < 0$, that
	\[-2 \mre{\varphi'(\sigma)} \frac{\zeta'(2\mre{\varphi(\sigma)})}{\zeta(2\mre{\varphi(\sigma)})} \sim -2 \frac{\zeta'(2\mre{c_1})}{\zeta(2 \mre{c_1})} |c_m| (\log{m}) m^{-\sigma},\]
	as $\sigma \to \infty$. On the other hand $-2 {\zeta_\Lambda'(2\sigma)}/{\zeta_\Lambda(2\sigma)} \sim 4 (\log{m}) m^{-2\sigma}$, establishing \eqref{eq:logdiff} for all sufficiently large $\sigma$. 
\end{proof}

Assume that $\Lambda\subseteq \mathbb{N}$ is such that $\mathscr{C}_\varphi$ maps $\mathscr{H}^2$ into $\mathscr{H}^2_\Lambda$. Set
\[S_\varphi := \sup_{\mre{w}>1/2} \frac{\|\mathscr{C}_\varphi K_w\|_{\mathscr{H}^2}}{\|K_w\|_{\mathscr{H}^2}} \qquad\text{and}\qquad S_\varphi^\ast(\Lambda) := \sup_{\mre{w}>\sigma(\Lambda)/2} \frac{\|\mathscr{C}_\varphi^\ast K_w^{\Lambda}\|_{\mathscr{H}^2}}{\|K_w^\Lambda\|_{\mathscr{H}^2_\Lambda}}.\]
Clearly both quantities constitute lower bounds for $\|\mathscr{C}_\varphi\|$. The proof of the following result is essentially the same as the proof of \cite[Prop.~3.1]{BR01}. 
\begin{lemma}\label{lem:inequalities} 
	Suppose that $\varphi \in \mathscr{G}$ is a non-constant Dirichlet series and that $\Lambda$ is such that $\mathscr{C}_\varphi$ maps $\mathscr{H}^2$ to $\mathscr{H}^2_\Lambda$. Then it holds that $S_\varphi \geq S_\varphi^\ast(\Lambda)$. 
\end{lemma}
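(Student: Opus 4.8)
The plan is to exploit the adjoint identity $\mathscr{C}_\varphi^\ast K_w^\Lambda = K_{\varphi(w)}$ already established in the proof of Lemma~\ref{lem:partial}, combine it with a routine ``sandwiching'' of the self-adjoint operator across the inner product, and close the estimate by a cancellation. The decisive feature is that the norm $\|\mathscr{C}_\varphi^\ast K_w^\Lambda\|_{\mathscr{H}^2}$ coincides with $\|K_{\varphi(w)}\|_{\mathscr{H}^2}$, which is exactly what lets a common factor be divided out rather than left as an uncontrolled ratio.

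First I would fix $w$ with $\mre{w}>\sigma(\Lambda)/2$ and verify the admissibility bookkeeping. Since $\varphi$ is non-constant, the set $\Lambda$ is infinite, so $\sigma(\Lambda)\in[0,1]$ and in particular $\mre{w}>0$; because $\varphi\in\mathscr{G}$ maps $\mathbb{C}_0^\ast$ into $\mathbb{C}_{1/2}$, the point $u=\varphi(w)$ satisfies $\mre{u}>1/2$, so $K_{\varphi(w)}$ is a legitimate test kernel in the definition of $S_\varphi$. I would then record the two identities $\mathscr{C}_\varphi^\ast K_w^\Lambda = K_{\varphi(w)}$ and $\|\mathscr{C}_\varphi^\ast K_w^\Lambda\|_{\mathscr{H}^2}^2 = \|K_{\varphi(w)}\|_{\mathscr{H}^2}^2 = \zeta(2\mre{\varphi(w)})$.

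Next I would write, moving one factor of the adjoint across the inner product and then substituting the identity above,
\[\|\mathscr{C}_\varphi^\ast K_w^\Lambda\|_{\mathscr{H}^2}^2 = \langle \mathscr{C}_\varphi \mathscr{C}_\varphi^\ast K_w^\Lambda, K_w^\Lambda\rangle_{\mathscr{H}^2_\Lambda} = \langle \mathscr{C}_\varphi K_{\varphi(w)}, K_w^\Lambda\rangle_{\mathscr{H}^2_\Lambda}.\]
Applying the Cauchy--Schwarz inequality in $\mathscr{H}^2_\Lambda$, noting that the $\mathscr{H}^2_\Lambda$-norm agrees with the $\mathscr{H}^2$-norm on $\mathscr{H}^2_\Lambda \subseteq \mathscr{H}^2$, and then invoking the definition of $S_\varphi$ on the factor $\|\mathscr{C}_\varphi K_{\varphi(w)}\|$, I obtain
\[\|\mathscr{C}_\varphi^\ast K_w^\Lambda\|_{\mathscr{H}^2}^2 \leq \|\mathscr{C}_\varphi K_{\varphi(w)}\|_{\mathscr{H}^2}\,\|K_w^\Lambda\|_{\mathscr{H}^2_\Lambda} \leq S_\varphi\, \|K_{\varphi(w)}\|_{\mathscr{H}^2}\,\|K_w^\Lambda\|_{\mathscr{H}^2_\Lambda}.\]
Using $\|K_{\varphi(w)}\|_{\mathscr{H}^2} = \|\mathscr{C}_\varphi^\ast K_w^\Lambda\|_{\mathscr{H}^2}$ turns the right-hand side into $S_\varphi\,\|\mathscr{C}_\varphi^\ast K_w^\Lambda\|_{\mathscr{H}^2}\,\|K_w^\Lambda\|_{\mathscr{H}^2_\Lambda}$, and dividing by $\|\mathscr{C}_\varphi^\ast K_w^\Lambda\|_{\mathscr{H}^2}\,\|K_w^\Lambda\|_{\mathscr{H}^2_\Lambda}$ (both nonzero, since $\varphi$ is non-constant and the kernels do not vanish) yields $\|\mathscr{C}_\varphi^\ast K_w^\Lambda\|_{\mathscr{H}^2}/\|K_w^\Lambda\|_{\mathscr{H}^2_\Lambda}\leq S_\varphi$. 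Taking the supremum over admissible $w$ gives $S_\varphi^\ast(\Lambda)\leq S_\varphi$.

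As for the main obstacle: there is none of an analytic nature, the argument being short and essentially mechanical once the adjoint identity of Lemma~\ref{lem:partial} is in hand. The only points demanding care are bookkeeping ones --- confirming that $\varphi(w)$ lands in $\mathbb{C}_{1/2}$ so that $S_\varphi$ is applicable there, that the two Hilbert-space norms coincide on $\mathscr{H}^2_\Lambda$, and that the final cancellation is legitimate. The conceptual crux that makes the estimate collapse is precisely the equality $\|\mathscr{C}_\varphi^\ast K_w^\Lambda\|_{\mathscr{H}^2} = \|K_{\varphi(w)}\|_{\mathscr{H}^2}$.
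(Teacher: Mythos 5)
Your proof is correct and follows essentially the same route as the paper's: both hinge on the identity $\mathscr{C}_\varphi^\ast K_w^\Lambda = K_{\varphi(w)}$ from Lemma~\ref{lem:partial}, rewrite $\|K_{\varphi(w)}\|^2$ as $\langle \mathscr{C}_\varphi K_{\varphi(w)}, K_w^\Lambda\rangle$, and apply Cauchy--Schwarz before cancelling the common factor. The only difference is cosmetic --- you work with squared norms and divide at the end, whereas the paper carries the ratios throughout.
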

\begin{proof}
	Fix $w$ such that $\mre{w}>\sigma(\Lambda)/2$. Since $\Lambda$ is an infinite set, we have that $\sigma(\Lambda)\geq0$. Hence $\varphi(w)$ is in $\mathbb{C}_{1/2}$. We recall that $ \mathscr{C}_\varphi^\ast K_w^\Lambda = K_{\varphi(w)}$ and use the Cauchy--Schwarz inequality to see that 
	\begin{align*}
		\frac{\|\mathscr{C}_\varphi^\ast K_w^\Lambda\|_{\mathscr{H}^2}}{\|K_w^\Lambda\|_{\mathscr{H}^2_\Lambda}} &= \frac{\langle K_{\varphi(w)}, K_{\varphi(w)}\rangle_{\mathscr{H}^2}}{\|K_w^\Lambda\|_{\mathscr{H}^2_\Lambda} \|K_{\varphi(w)}\|_{\mathscr{H}^2}} \\
		&= \frac{\langle \mathscr{C}_\varphi K_{\varphi(w)}, K_w^\Lambda \rangle_{\mathscr{H}^2_\Lambda}}{\|K_w^\Lambda\|_{\mathscr{H}^2_\Lambda} \|K_{\varphi(w)}\|_{\mathscr{H}^2}} \leq \frac{\|\mathscr{C}_\varphi K_{\varphi(w)}\|_{\mathscr{H}^2}}{\|K_{\varphi(w)}\|_{\mathscr{H}^2}}. 
	\end{align*}
	Taking the supremum over $\mre{w}>\sigma(\Lambda)/2$ yields that $S_\varphi^\ast(\Lambda) \leq S_\varphi$. 
\end{proof}
Let us now return to the discussion of the symbol $\varphi(s) = c + r2^{-s}$, where $\mre{c} - 1/2 \geq r > 0$. It was asked in \cite[Sec.~5]{MPQ18} whether 
\begin{equation}\label{eq:wrongconj} 
	\|\mathscr{C}_\varphi\| = S_\varphi = S_\varphi^\ast(\mathbb{N}). 
\end{equation}
However, in this case, $\mathscr{C}_\varphi$ maps $\mathscr{H}^2$ into $\mathscr{H}^2_\Lambda$, for $\Lambda = \{2^j,\,j=0,1,\ldots\}$. Note that
\[\sup_{ \mre{w} > 1/2} \frac{\zeta(2 \mre{ \varphi(w)})}{\zeta(2\mre{w})} = \sup_{ 1/2 < \sigma < \infty} \frac{\zeta(2\mre{c} -2r2^{-\sigma})}{\zeta(2\sigma)},\]
which implies that
\[\lim_{\sigma \to (1/2)^+} \frac{\zeta(2\mre{c} -2r2^{-\sigma})}{\zeta(2\sigma)} = 0 \qquad \text{and} \qquad \lim_{\sigma \to \infty} \frac{\zeta(2\mre{c} -2r2^{-\sigma})}{\zeta(2\sigma)} = \zeta(2\mre{c}).\]
By the proof of Theorem~\ref{thm:pointeval}, we hence find that there is a value $\sigma^\ast \in (1/2, \infty)$ for which
\[(S_\varphi^\ast(\mathbb{N}))^2 = \frac{\zeta(2\mre{c} -2r2^{-\sigma^\ast})}{\zeta(2\sigma^\ast)}.\]
Since $\zeta_{\Lambda}(2\sigma^\ast) < \zeta(2\sigma^\ast)$, we conclude that $S_\varphi^\ast(\Lambda) > S_\varphi^\ast(\mathbb{N})$. Therefore, the second equality in \eqref{eq:wrongconj} could not be true, in view of Lemma~\ref{lem:inequalities}.

More generally, given some symbol $\varphi$, if $\Lambda$ is the minimal set so that $\mathscr{C}_\varphi$ maps $\mathscr{H}^2$ to $\mathscr{H}^2_\Lambda$, we define
\[S_\varphi^\ast := S_\varphi^\ast(\Lambda).\]
Our next goal is to use Lemma~\ref{lem:partial} to prove a new lower bound for $\|\mathscr{C}_\varphi\|$ when $\varphi(s) = c + r2^{-s}$. 
\begin{theorem}\label{thm:adjointlower} 
	Let $\varphi(s) = c + r2^{-s}$ with $\mre{c}-1/2\geq r >0$. Then 
	\begin{equation}\label{eq:adjointlower} 
		(S_\varphi^\ast)^2 = \sup_{0<x<1} (2-x) x\zeta(2\mre{c}-2r(1-x)). 
	\end{equation}
	In particular, 
	\begin{enumerate}
		\item[(a)] it holds that $(S_\varphi^\ast)^2 \geq \xi^{-1}$, where $\xi$ is as in Theorem~\ref{thm:mpq}, and 
		\item[(b)] if $\mre{c}-1/2=r=\xi\leq 1/4$, then $(S_\varphi^\ast)^2 = \xi^{-1}$. 
	\end{enumerate}
\end{theorem}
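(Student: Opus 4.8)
The plan is to first reduce the computation of $(S_\varphi^\ast)^2$ to the one-dimensional supremum in \eqref{eq:adjointlower}, and then to analyze that supremum using the elementary two-sided bounds on $\zeta$. The starting point is the identification of the relevant index set: since $n^{-\varphi(s)} = n^{-c}\sum_{j\ge 0}\frac{(-r\log n)^j}{j!}2^{-js}$, the minimal $\Lambda$ with $\mathscr{C}_\varphi(\mathscr{H}^2)\subseteq \mathscr{H}^2_\Lambda$ is $\Lambda = \operatorname{mult}(\{1,2\}) = \{2^j : j\ge 0\}$, so that $\zeta_\Lambda(s) = (1-2^{-s})^{-1}$ and $\sigma(\Lambda)=0$. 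Using the identity $\mathscr{C}_\varphi^\ast K_w^\Lambda = K_{\varphi(w)}$ established in the proof of Lemma~\ref{lem:partial}, I would write
\[(S_\varphi^\ast)^2 = \sup_{\mre{w}>0}\frac{\zeta(2\mre{\varphi(w)})}{\zeta_\Lambda(2\mre{w})}.\]
For fixed $\sigma = \mre{w}$ the denominator is constant, while $\mre{\varphi(\sigma+it)} = \mre{c} + r2^{-\sigma}\cos(t\log 2)$ ranges over $[\mre{c}-r2^{-\sigma},\,\mre{c}+r2^{-\sigma}]$. Since $\zeta$ is decreasing on $(1,\infty)$ and $\mre{c}-r2^{-\sigma}>1/2$ (because $r2^{-\sigma}<r\le \mre{c}-1/2$), the supremum over $t$ is attained when $\cos(t\log 2)=-1$, giving $\mre{\varphi(w)}=\mre{c}-r2^{-\sigma}$. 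Substituting $x = 1-2^{-\sigma}\in(0,1)$, so that $1-4^{-\sigma}=x(2-x)$ and $2r2^{-\sigma}=2r(1-x)$, turns $\zeta_\Lambda(2\sigma)^{-1}=1-4^{-\sigma}$ into $(2-x)x$ and yields \eqref{eq:adjointlower}.

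For (a), writing $A = \mre{c}-1/2$, I would bound $\zeta$ below by $\zeta(\sigma)\ge (\sigma-1)^{-1}$ (which follows from the monotonicity of $(\sigma-1)\zeta(\sigma)$ used in the proof of Corollary~\ref{cor:smallnorm}), so that
\[(S_\varphi^\ast)^2 \ge \sup_{0<x<1}\frac{(2-x)x}{2\big(A-r(1-x)\big)}.\]
Differentiating the right-hand side leads to the critical-point equation $rx^2 + 2(A-r)x - 2(A-r)=0$, whose relevant root is $x^\ast = \big(r-A+\sqrt{A^2-r^2}\big)/r = 1 - r/\xi$, using $A-\sqrt{A^2-r^2}=r^2/\xi$ (equivalently, that $\xi$ solves $t^2-2At+r^2=0$). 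Feeding the critical-point relation $2(A-r)(1-x^\ast)=r(x^\ast)^2$ back into the quotient simplifies its value at $x^\ast$ to $(1-x^\ast)/r = \xi^{-1}$, which gives the claim. When $A=r$ one has $x^\ast = 0$ on the boundary, and the same value $\xi^{-1}=1/r$ is recovered as the limit $x\to 0^+$ of $(2-x)/(2r)$.

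For (b), the hypothesis $A=r$ forces $2\mre{c}-2r(1-x)=1+2rx$, so that $(S_\varphi^\ast)^2 = \sup_{0<x<1}(2-x)x\,\zeta(1+2rx)$. Here I would invoke the complementary elementary bound $\zeta(\sigma)\le 1 + (\sigma-1)^{-1}$, giving $(2-x)x\,\zeta(1+2rx)\le (2-x)x + \frac{2-x}{2r}$; this is $\le 1/r$ precisely when $2-x\le 1/(2r)$, which holds for every $x\in(0,1)$ exactly because $r\le 1/4$. Hence $(S_\varphi^\ast)^2\le \xi^{-1}$, and combined with (a) this forces equality. The main obstacle is the exact critical-point algebra in (a): one must recognize that the maximizer of the rational lower bound and the resulting value $\xi^{-1}$ are both governed by the single quadratic $t^2-2At+r^2=0$ that defines $\xi$. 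Once this is in hand, the threshold $r\le 1/4$ in (b) emerges transparently from matching the crude upper bound for $\zeta$ against the sharp lower bound obtained in (a).
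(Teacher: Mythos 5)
Your proposal is correct, and for the identity \eqref{eq:adjointlower} and part (a) it follows essentially the same route as the paper: the adjoint identity $\mathscr{C}_\varphi^\ast K_w^\Lambda = K_{\varphi(w)}$ from Lemma~\ref{lem:partial} with $\Lambda=\{2^j\}$, minimization of $\mre{\varphi(\sigma+it)}$ over $t$, the substitution $x=1-2^{-\sigma}$, and then the integral estimate $\zeta(\sigma)\geq(\sigma-1)^{-1}$ followed by optimization of the resulting rational function. Your critical-point analysis is in fact cleaner than what is printed: you correctly identify the maximizer as $x^\ast = 1-r/\xi \in [0,1)$ and verify the value $(1-x^\ast)/r=\xi^{-1}$ via the relation $2(A-r)(1-x^\ast)=r(x^\ast)^2$, whereas the paper records the choice $x=1-\xi/r$, which is nonpositive and is evidently a typo for your value.

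Where you genuinely diverge is part (b). The paper shows that $x\mapsto(2-x)x\zeta(1+2\xi x)$ is \emph{decreasing} on $(0,1)$, by logarithmic differentiation combined with the $k=1$ lower bound of Lemma~\ref{lem:dkzeta}, so that the supremum is the limit $\xi^{-1}$ as $x\to0^+$; the condition $\xi\leq\frac{1}{2(2-x)}$ for all $x\in(0,1)$ produces the threshold $\xi\leq1/4$. You instead use the complementary elementary bound $\zeta(\sigma)\leq1+(\sigma-1)^{-1}$ to get the pointwise estimate $(2-x)x\,\zeta(1+2rx)\leq(2-x)x+\frac{2-x}{2r}$, and the inequality against $1/r$ reduces, after multiplying by $2r$ and cancelling $x$, to $2r(2-x)\leq1$, which again holds for all $x\in(0,1)$ exactly when $r\leq1/4$. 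Your argument avoids Lemma~\ref{lem:dkzeta} and the differentiation entirely and gives the matching upper bound directly; the paper's monotonicity argument gives slightly more information (the shape of the function on all of $(0,1)$) at the cost of more machinery. Both yield the same threshold, and your version is a perfectly valid, somewhat more economical proof of (b).
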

\begin{proof}
	Clearly $\Lambda = \{2^j \,:\, j=0,1,\ldots\}$ is the smallest possible set such that $\mathscr{C}_\varphi$ maps $\mathscr{H}^2$ into $\mathscr{H}^2_\Lambda$. Moreover, $\sigma(\Lambda)=0$. Hence, by definition and the fact that $\mre{c}$ and $r$ are positive, we see that
	\[(S_\varphi^\ast)^2 = \sup_{\mre{w}>0} \frac{\zeta(2\mre{\varphi(w)})}{\zeta_\Lambda(2\mre{w})} = \sup_{0<\sigma<\infty} (1-2^{-2\sigma}) \zeta(2\mre{c}-2r2^{-\sigma}),\]
	Substituting $x= 1-2^{-\sigma}$ we get \eqref{eq:adjointlower}. To prove (a), we first apply the standard integral estimate $\zeta(\sigma)\geq (\sigma-1)^{-1}$ to obtain
	\[(S_\varphi^\ast)^2 \geq \sup_{0<x<1} \frac{(2-x)x}{2\mre{c}-1-2r(1-x)} \geq \frac{1}{(\mre{c}-1/2) + \sqrt{(\mre{c}-1/2)^2-r^2}} = \frac{1}{\xi},\]
	where we on the basis of a calculus argument chose
	\[x = 1-\frac{(\mre{c}-1/2)+\sqrt{(\mre{c}-1/2)^2-r^2}}{r}.\]
	In the case (b), we have $\mre{c}-1/2=r=\xi$. The lower bound $(S_\varphi^\ast)^2 \geq \xi^{-1}$ is then obtained by letting $x\to0^+$. Hence it is sufficient to prove that
	\[x \mapsto (2-x)x \zeta(1+2\xi x)\]
	is decreasing on $(0,1)$. Logarithmically differentiating the right hand side and multiplying with $-x$ gives the condition
	\[-2\xi x \frac{\zeta'(1+2\xi x)}{\zeta(1+2\xi x)} \geq \frac{4-3x}{2-x}-1, \qquad 0 < x < 1.\]
	To verify this, note that the lower bound in Lemma~\ref{lem:dkzeta} for $k=1$ combined with the estimate $\zeta(\sigma)\geq(\sigma-1)^{-1}$ yields that
	\[-(\sigma-1) \frac{\zeta'(\sigma)}{\zeta(\sigma)} \geq 1-\frac{1}{\zeta(\sigma)} \geq 1-(\sigma-1), \qquad \sigma > 1.\]
	Hence we are done if it holds for all $0 < x < 1$ that
	\[1-2\xi x \geq \frac{4-3x}{2-x} -1 \qquad \Longleftrightarrow \qquad \xi \leq \frac{1}{2(2-x)},\]
	which is clearly true if $0<\xi \leq 1/4$. 
\end{proof}
\begin{remark}
	The restriction $0<\xi \leq 1/4$ in (b) can certainly be improved by more careful estimates, but we cannot have $(S_\varphi^\ast)^2 = \xi^{-1}$ for every $\xi>0$, since we get that $(S_\varphi^\ast)^2 > \zeta(2\mre{c}) = \zeta(1+2\xi)$ from the proof of Theorem~\ref{thm:pointeval}. 
\end{remark}

Let us return to the question asked in \cite[Sec.~5]{MPQ18}, about the validity of \eqref{eq:wrongconj}. Revising the lattermost quantity in \eqref{eq:wrongconj}, we arrive at the following. 
\begin{question}
	If $\varphi(s) = c + r2^{-s}$ is in $\mathscr{G}$ and $r>0$, does it hold that $\|\mathscr{C}_\varphi\| = S_\varphi = S_\varphi^\ast$? 
\end{question}
While we do not have any additional results directly addressing this question, let us briefly discuss what we can say about the analogous question for the family of symbols considered in \cite{Brevig17}. For $0<\alpha<\infty$, let
\[\varphi_\alpha(s) := \frac{1}{2}+\alpha \mathscr{T}(2^{-s}) = \frac{1}{2} + \alpha \frac{1-2^{-s}}{1+2^{-s}},\]
where $\mathscr{T}$ denotes the Cayley transform \eqref{eq:cayley}. The main result in \cite{Brevig17} identifies the symbols $\varphi_\alpha$ as those generating composition operators of maximal norm with $\mre{\varphi(+\infty)} = 1/2 + \alpha$ (see also Theorem~\ref{thm:inner}). In other words, if $\varphi\in\mathscr{G}$ satisfies $\mre{\varphi(+\infty)}=1/2+\alpha$, then for every $f$ in $\mathscr{H}^2$ it holds that 
\begin{equation}\label{eq:alphasub} 
	\|\mathscr{C}_\varphi f\|_{\mathscr{H}^2} \leq \|\mathscr{C}_{\varphi_\alpha} f\|_{\mathscr{H}^2}. 
\end{equation}
Furthermore, it was demonstrated in \cite{Brevig17} that 
\begin{equation}\label{eq:Brevigest} 
	\max\left(\frac{2}{\alpha},\zeta(1+2\alpha)\right) \leq \|\mathscr{C}_{\varphi_\alpha}\|^2 \leq \max\left(\frac{2}{\alpha},\zeta(1+\alpha)\right). 
\end{equation}
The lower bound $2/\alpha$ was obtained by establishing that $(S_{\varphi_\alpha})^2 \geq 2/\alpha$, while the lower bound $\zeta(1+2\alpha)$ is simply the general lower bound \eqref{eq:genlower}. Appealing to the methods of this paper, note that the same considerations as in the proof of Theorem~\ref{thm:adjointlower} show that
\[(S_{\varphi_\alpha}^\ast)^2 = \sup_{0<x<1} \frac{4x}{(1+x)^2} \zeta(1+ 2\alpha x).\]
We thus obtain that $(S_{\varphi_\alpha}^\ast)^2 \geq \max(2/\alpha,\zeta(1+2\alpha))$ by considering $x\to 0^+$ and $x\to 1^-$. This provides a new proof of the lower bound in \eqref{eq:Brevigest} and, for sufficiently large $\alpha$, say, $\alpha \geq 2$, it also yields a small improvement. Combined with the upper bound in \eqref{eq:Brevigest}, this shows that
\[\|\mathscr{C}_{\varphi_\alpha}\| = S_{\varphi_\alpha} = S_{\varphi_\alpha}^\ast = \sqrt{\frac{2}{\alpha}}\]
for all $0< \alpha \leq \alpha_0$, where $\alpha_0 \approx 1.5$ is the unique positive solution to $2 = \alpha\zeta(1+\alpha)$. We do not know whether the first two of these equalities hold also for $\alpha > \alpha_0$.

In analogy with this result, it is tempting to conjecture that $\|\mathscr{C}_\varphi\| = S_\varphi = S_\varphi^\ast$ could hold at least for affine symbols $\varphi(s)= 1/2+\xi(1-2^{-s})$ with sufficiently small $\xi>0$. If this holds, then part (b) of Theorem~\ref{thm:adjointlower} implies that $\|\mathscr{C}_\varphi\|=\xi^{-1/2}$ for all sufficiently small $\xi>0$.

\section{Composition operators on $H^2(\mathbb{T})$ and inner functions} \label{sec:compdisc} 
Let $H^2(\mathbb{T})$ denote the Hardy space of analytic functions $f(z)=\sum_{k\geq0} a_k z^k$ in the unit disc $\mathbb{D} = \mathbb{D}(0,1)$ with square summable coefficients. Every $f \in H^2(\mathbb{T})$ has non-tangential boundary values almost everywhere on $\mathbb{T}$,
\[f(e^{i\theta}) = \lim_{r\to1^-} f(re^{i\theta}).\]
The norm of $H^2(\mathbb{T})$ is given by 
\begin{equation}\label{eq:H2Dnorm} 
	\|f\|_{H^2(\mathbb{T})}^2 := \int_{\mathbb{T}} |f(z)|^2 \, dm(z) = \sum_{k=0}^\infty |a_k|^2. 
\end{equation}
Via non-tangential boundary values, $H^2(\mathbb{T})$ can be viewed as the subspace of $L^2(\mathbb{T})$ of functions whose negative Fourier coefficients all vanish.

Every analytic function $\varphi$ mapping $\mathbb{D}$ into itself generates a composition operator on $H^2(\mathbb{T})$ by $\mathscr{C}_\varphi(f) = f \circ \varphi$. The following well-known norm estimates are sharp, 
\begin{equation}\label{eq:H2Dest} 
	\frac{1}{1-|\varphi(0)|^2} \leq \|\mathscr{C}_\varphi\|^2 \leq \frac{1+|\varphi(0)|}{1-|\varphi(0)|}. 
\end{equation}
The lower bound can be deduced from reproducing kernel arguments, cf. Lemma~\ref{lem:partial}. The upper bound is a consequence of Littlewood's subordination principle, which states that if $\varphi(0)=0$, then 
\begin{equation}\label{eq:littlewood} 
	\|\mathscr{C}_\varphi f\|_{H^2(\mathbb{T})} \leq \|f\|_{H^2(\mathbb{T})}, \qquad f \in H^2(\mathbb{T}). 
\end{equation}
To extend this to the general case $\varphi(0)=w \neq 0$, we use the M\"obius transformation 
\begin{equation}\label{eq:mobius} 
	\psi_w(z) := \frac{w-z}{1-\overline{w}z}, 
\end{equation}
which maps $\mathbb{D}$ onto itself and interchanges the points $0$ and $w$. Writing
\[f \circ \varphi = f \circ \psi_w \circ \psi_w^{-1} \circ \varphi\]
and applying Littlewood's subordination principle to $\psi_w^{-1}\circ \varphi$ yields that 
\begin{equation}\label{eq:littlewoodstrong} 
	\|\mathscr{C}_\varphi f\|_{H^2(\mathbb{T})} \leq \|\mathscr{C}_{\psi_w} f\|_{H^2(\mathbb{T})}, \qquad f \in H^2(\mathbb{T}). 
\end{equation}
The upper bound in \eqref{eq:H2Dest} now follows from the fact that
\[\|\mathscr{C}_{\psi_w}\|^2= \frac{1+|w|}{1-|w|},\]
which can deduced from changing the variables in the integral expression for $\|f \circ \psi_w\|_{H^2(\mathbb{T})}$ from \eqref{eq:H2Dnorm}, together with a simple estimate.

If $\varphi(0)=0$, then the upper and lower bounds coincide and thus $\|\mathscr{C}_\varphi\|=1$. However, this is no longer true in general if we restrict $\mathscr{C}_\varphi$ to $H^2_0(\mathbb{T})$, the subspace of functions $f \in H^2(\mathbb{T})$ with $f(0) = 0$. In this case, J. H.~Shapiro \cite{Shapiro00} has proved the following theorem. Recall that $\varphi$ is said to be inner if $|\varphi(z)| = 1$ for almost every $z \in \mathbb{T}$. 
\begin{theorem}[Shapiro] \label{thm:shapiro1} 
	Suppose that $\varphi$ is an analytic self-map of $\mathbb{D}$ with $\varphi(0) = 0$. Then the following are equivalent. 
	\begin{enumerate}
		\item[(a)] $\varphi$ is inner. 
		\item[(b)] $\mathscr{C}_\varphi \colon H^2(\mathbb{T}) \to H^2(\mathbb{T})$ is an isometry. 
		\item[(c)] $\|\mathscr{C}_\varphi|_{H^2_0(\mathbb{T})}\| = 1$. 
	\end{enumerate}
\end{theorem}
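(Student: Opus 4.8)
The cyclic structure of the equivalence suggests proving the implications $(a) \Rightarrow (b) \Rightarrow (c) \Rightarrow (a)$, of which the first two are straightforward and the last is the substantive one. The plan is to handle the easy implications using the $L^2(\mathbb{T})$ boundary-value description of the norm in \eqref{eq:H2Dnorm}, and to prove the hard implication $(c) \Rightarrow (a)$ by contraposition: assuming $\varphi$ is \emph{not} inner, I will exhibit a function $f \in H^2_0(\mathbb{T})$ witnessing $\|\mathscr{C}_\varphi f\|_{H^2(\mathbb{T})} < \|f\|_{H^2(\mathbb{T})}$, so that the restricted norm is strictly less than $1$.

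\smallskip
\textbf{The easy implications.} For $(a) \Rightarrow (b)$, assume $\varphi$ is inner. Since $\varphi(0)=0$, for any $f \in H^2(\mathbb{T})$ I would compute $\|\mathscr{C}_\varphi f\|_{H^2(\mathbb{T})}^2 = \int_\mathbb{T} |f(\varphi(z))|^2\,dm(z)$, where the boundary function $z \mapsto \varphi(z)$ maps $\mathbb{T}$ into $\mathbb{T}$ almost everywhere. The key classical fact I would invoke is that, because $\varphi$ is inner with $\varphi(0)=0$, the pushforward of Lebesgue measure $m$ under the boundary map $\varphi$ is again $m$; this follows from the mean-value property $\int_\mathbb{T} (\varphi(z))^k \overline{(\varphi(z))^j}\,dm(z) = \delta_{jk}$, which in turn comes from $\int_\mathbb{T} \varphi^n\,dm = \varphi(0)^n = 0$ for $n \geq 1$ (inner, so $|\varphi|=1$ a.e.\ lets one reduce all moments). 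Hence $\int_\mathbb{T}|f\circ\varphi|^2\,dm = \int_\mathbb{T}|f|^2\,dm$, giving isometry. The implication $(b) \Rightarrow (c)$ is immediate, since an isometry on all of $H^2(\mathbb{T})$ restricts to an isometry on the subspace $H^2_0(\mathbb{T})$, so the restricted norm equals $1$.

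\smallskip
\textbf{The hard implication $(c) \Rightarrow (a)$.} I expect this to be the main obstacle. Suppose $\varphi$ is not inner, so the set $\{z \in \mathbb{T} : |\varphi(z)| < 1\}$ has positive measure. The natural test functions are the normalized monomials $f(z) = z^n \in H^2_0(\mathbb{T})$ for $n \geq 1$, for which $\|\mathscr{C}_\varphi f\|^2 = \int_\mathbb{T} |\varphi(z)|^{2n}\,dm(z)$. Since $|\varphi| \leq 1$ on $\mathbb{T}$ with strict inequality on a set of positive measure, dominated convergence gives $\int_\mathbb{T}|\varphi|^{2n}\,dm \to m\{|\varphi|=1\} < 1$ as $n \to \infty$. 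Thus for large $n$ one already has $\|\mathscr{C}_\varphi z^n\| < 1$, yielding $\|\mathscr{C}_\varphi|_{H^2_0(\mathbb{T})}\| $ bounded by something— but this only shows the supremum over monomials is achieved below $1$, not the operator norm itself. The delicate point is that $\|\mathscr{C}_\varphi|_{H^2_0(\mathbb{T})}\|$ is a supremum over \emph{all} of $H^2_0$, and one must rule out the norm being attained in the limit by functions concentrating where $|\varphi|$ is close to $1$. The clean way around this is to use the subordination reduction $(b)$-style argument together with Littlewood \eqref{eq:littlewood}: since $\varphi(0)=0$, Littlewood gives $\|\mathscr{C}_\varphi f\| \leq \|f\|$ for all $f$, with the standard strict-contraction analysis showing that the restriction to $H^2_0$ has norm exactly $\big(\sup_n \int_\mathbb{T}|\varphi|^{2n}\,dm\big)^{1/2}$ only in the isometric case; more robustly, I would appeal to the fact that $\|\mathscr{C}_\varphi|_{H_0^2}\|=1$ forces the pushforward measure $\varphi_* m$ to coincide with $m$ on all positive Fourier modes, which by a density/moment argument forces $|\varphi|=1$ a.e. I anticipate the genuinely nontrivial analytic input to be showing that equality of the restricted norm with $1$ propagates from the monomial test functions to a rigidity statement forcing innerness, rather than merely bounding individual test values.
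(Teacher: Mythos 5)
Your implications (a) $\Rightarrow$ (b) $\Rightarrow$ (c) are sound: the moment computation $\int_{\mathbb{T}}\varphi^{k}\overline{\varphi^{j}}\,dm=\delta_{jk}$, obtained from $|\varphi|=1$ a.e.\ and $\int_{\mathbb{T}}\varphi^{n}\,dm=\varphi(0)^{n}=0$, is precisely Nordgren's argument for (a) $\Rightarrow$ (b), and the restriction step (b) $\Rightarrow$ (c) is immediate.

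The gap is in (c) $\Rightarrow$ (a), and you have correctly located the difficulty without closing it. Testing on monomials only shows that $\sup_{n}\|\mathscr{C}_\varphi z^{n}\|_{H^2(\mathbb{T})}<1$ when $\varphi$ is not inner; passing from this to $\|\mathscr{C}_\varphi|_{H^2_0(\mathbb{T})}\|<1$ is the entire content of Shapiro's contribution, and the bridge you propose does not hold up. The assertion that ``$\|\mathscr{C}_\varphi|_{H^2_0(\mathbb{T})}\|=1$ forces the pushforward measure $\varphi_* m$ to coincide with $m$ on all positive Fourier modes'' is unsupported: the norm need not be attained, and knowing that a supremum over all of $H^2_0(\mathbb{T})$ equals $1$ yields no identity for any individual moment $\int_{\mathbb{T}}|\varphi|^{2n}\,dm$. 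Likewise, the claim that the restricted norm equals $\bigl(\sup_{n}\int_{\mathbb{T}}|\varphi|^{2n}\,dm\bigr)^{1/2}$ is false in general. What is actually required is a \emph{uniform} strengthening of Littlewood's inequality \eqref{eq:littlewood}: if $E_\delta=\{z\in\mathbb{T}:|\varphi(z)|<\delta\}$ has positive measure for some $\delta<1$, then there is a constant $C_\delta>0$, depending only on $\delta$ and $m(E_\delta)$, such that $\|\mathscr{C}_\varphi f\|_{H^2(\mathbb{T})}^2\le C_\delta|f(0)|^2+(1-C_\delta)\|f\|_{H^2(\mathbb{T})}^2$ for \emph{every} $f\in H^2(\mathbb{T})$; applied to $f\in H^2_0(\mathbb{T})$ this gives $\|\mathscr{C}_\varphi|_{H^2_0(\mathbb{T})}\|\le\sqrt{1-C_\delta}<1$ in one stroke. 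This is exactly Lemma~\ref{lem:sharpiro}, whose proof runs through the change-of-variables formula \eqref{eq:NC} and a Jensen-formula estimate for the Nevanlinna counting function, $N_\varphi(w)\le\log\frac{1}{|w|}-C_\delta(1-|w|^2)$. No argument of that type appears in your sketch, so the implication (c) $\Rightarrow$ (a) remains unproved.
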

The equivalence of (a) and (b) is due to Nordgren \cite{Nordgren68}. Note in particular that the implication (b) $\implies$ (a) can be deduced by considering the action of $\mathscr{C}_\varphi$ on monomials. Shapiro's insight was to show that $\|\mathscr{C}_\varphi|_{H^2_0(\mathbb{T})}\| < 1$ when $\varphi$ is not inner, by establishing a version of Littlewood's subordination principle \eqref{eq:littlewood} which takes the size of the symbol on $\mathbb{T}$ into account. 

We will now strengthen Shapiro's estimate, by showing that it can be made uniform in the non-innerness of $\varphi$. In preparation, recall that if $\varphi(0) = 0$, a change of variables in the Littlewood--Paley formula for the $H^2(\mathbb{T})$-norm yields that 
\begin{equation}\label{eq:NC} 
	\|\mathscr{C}_\varphi f\|_{H^2(\mathbb{T})}^2 = |f(0)|^2 + 2 \int_{\mathbb{D}} |f'(w)|^2 \, N_\varphi(w)\,dA(w). 
\end{equation}
Here $dA$ is the normalized area measure on $\mathbb{D}$, and $N_\varphi$ is the Nevanlinna counting function, defined by
\[N_\varphi(w) := \sum_{z \in \varphi^{-1}(\{w\})}\log{\frac{1}{|z|}}, \qquad w \neq 0,\]
where preimages are counted with multiplicity. 
\begin{lemma}\label{lem:sharpiro} 
	Suppose that $\varphi \colon \mathbb{D}\to\mathbb{D}$ is analytic and fixes the origin. Let $0\leq \delta\leq 1$, and define $E_\delta := \{z \in \mathbb{T}\,:\, |\varphi(z)| < \delta\}$. For every $f$ in $H^2(\mathbb{T})$ it holds that 
	\begin{equation}\label{eq:sharpiro} 
		\|\mathscr{C}_\varphi f\|_{H^2(\mathbb{T})}^2 \leq C_\delta|f(0)|^2 + (1-C_\delta)\|f\|_{H^2(\mathbb{T})}^2, 
	\end{equation}
	where $C_\delta = \frac{1}{2}\frac{1-\delta}{1+\delta}m(E_\delta)$. 
\end{lemma}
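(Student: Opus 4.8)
The plan is to run everything through the Littlewood--Paley/Nevanlinna identity \eqref{eq:NC}. Applying \eqref{eq:NC} both to $\mathscr{C}_\varphi f$ and to the identity symbol (where $N_{\mathrm{id}}(w)=\log\frac{1}{|w|}$, giving the classical identity $\|f\|_{H^2(\mathbb{T})}^2=|f(0)|^2+2\int_{\mathbb{D}}|f'(w)|^2\log\frac{1}{|w|}\,dA(w)$) and subtracting, the term $|f(0)|^2$ cancels and I obtain
\[
\|f\|_{H^2(\mathbb{T})}^2-\|\mathscr{C}_\varphi f\|_{H^2(\mathbb{T})}^2=2\int_{\mathbb{D}}|f'(w)|^2 D(w)\,dA(w),\qquad D(w):=\log\tfrac{1}{|w|}-N_\varphi(w).
\]
Since $\|f\|^2-|f(0)|^2=2\int_{\mathbb{D}}|f'|^2\log\frac{1}{|w|}\,dA$, the asserted bound \eqref{eq:sharpiro} is \emph{equivalent} to the single inequality $\int_{\mathbb{D}}|f'(w)|^2 D(w)\,dA(w)\ge C_\delta\int_{\mathbb{D}}|f'(w)|^2\log\frac{1}{|w|}\,dA(w)$ for all $f$. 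So the whole problem reduces to a sufficiently strong lower bound on the Nevanlinna defect $D(w)\ge 0$, tested against the weight $|f'|^2\,dA$.

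The second step is to bound $D(w)$ pointwise in $w$ in terms of the boundary set $E_\delta$. By Jensen's inequality applied to the bounded analytic function $\varphi-w$ (a singular inner factor only increases the defect), together with the factorization $\varphi-w=-(1-\overline{w}\varphi)\,\psi_w(\varphi)$ and the fact that $\log|1-\overline{w}\varphi^\ast|$ integrates to $\log|1-\overline{w}\varphi(0)|=0$, I get
\[
D(w)\ \ge\ \int_{\mathbb{T}}\log\frac{1}{|\psi_w(\varphi^\ast(z))|}\,dm(z).
\]
On $E_\delta$ I estimate the pseudohyperbolic distance $|\psi_w(\varphi^\ast(z))|$ from above: by the triangle inequality for the pseudohyperbolic metric (equivalently Schwarz--Pick), $|\psi_w(b)|\le\frac{|w|+|b|}{1+|w||b|}$, which is increasing in $|b|$, so for $z\in E_\delta$ (where $|\varphi^\ast(z)|<\delta$) one has $|\psi_w(\varphi^\ast(z))|\le\frac{|w|+\delta}{1+\delta|w|}$. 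Discarding the (nonnegative) integrand off $E_\delta$ yields the clean pointwise bound $D(w)\ge m(E_\delta)\,\log\frac{1+\delta|w|}{|w|+\delta}$.

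For the third step I feed this into the weighted inequality. Since $\log\frac{1+\delta|w|}{|w|+\delta}$ and $\log\frac{1}{|w|}$ are radial, expanding $f'(w)=\sum_{k\ge 0}c_k w^k$ and using orthogonality diagonalizes both sides, and (after cancelling $m(E_\delta)$) reduces everything to the family of one-dimensional inequalities
\[
I_k:=\int_0^1 r^{2k+1}\Big(\log\tfrac{1+\delta r}{r+\delta}-\tfrac12\tfrac{1-\delta}{1+\delta}\log\tfrac{1}{r}\Big)\,dr\ \ge\ 0,\qquad k=0,1,2,\ldots.
\]
The main obstacle is precisely establishing these $I_k\ge 0$: the naive \emph{pointwise} comparison $\log\frac{1+\delta r}{r+\delta}\ge\frac12\frac{1-\delta}{1+\delta}\log\frac1r$ is false near $r=0$ (the left side stays bounded while the right side blows up), so the inequality genuinely has to be proved in the integrated sense. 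The way through is integration by parts in $I_k$: using $\log\frac{1+\delta r}{r+\delta}\big|_{r=1}=0$ and $\frac{d}{dr}\log\frac{1+\delta r}{r+\delta}=-\frac{1-\delta^2}{(r+\delta)(1+\delta r)}$, one finds $\int_0^1 r^{2k+1}\log\frac{1+\delta r}{r+\delta}\,dr=\frac{(1-\delta)(1+\delta)}{2(k+1)}\int_0^1\frac{r^{2k+2}}{(r+\delta)(1+\delta r)}\,dr$, while $\int_0^1 r^{2k+1}\log\frac1r\,dr=\frac{1}{4(k+1)^2}$. After cancelling the common factor $1-\delta$, the claim becomes $\int_0^1\frac{r^{2k+2}}{(r+\delta)(1+\delta r)}\,dr\ge\frac{1}{4(1+\delta)^2(k+1)}$, which follows at once from the elementary bound $(r+\delta)(1+\delta r)\le(1+\delta)^2$ on $[0,1]$ (the product is increasing in $r$) together with $\int_0^1 r^{2k+2}\,dr=\frac{1}{2k+3}\ge\frac{1}{4(k+1)}$. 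This closes the argument; the degenerate cases $\delta\in\{0,1\}$ or $m(E_\delta)=0$ make $C_\delta=0$, where \eqref{eq:sharpiro} is just Littlewood's principle.
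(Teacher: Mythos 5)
Your proof is correct, and its skeleton coincides with the paper's: both arguments run through the change-of-variables formula \eqref{eq:NC}, bound the Nevanlinna counting function by Jensen's formula applied to $\psi_w\circ\varphi$ (using that $\log|1-\overline{w}\varphi^\ast|$ integrates to zero), and then discard the contribution off $E_\delta$. Where you diverge is in the pointwise lower bound for the defect $D(w)=\log\frac{1}{|w|}-N_\varphi(w)$ and in how that bound is integrated against $|f'|^2\,dA$. The paper uses the identity $1-|\psi_w(b)|^2=\frac{(1-|w|^2)(1-|b|^2)}{|1-\overline{w}b|^2}$ together with $\log\frac1x\geq 1-x$ to obtain the radial weight $C_\delta(1-|w|^2)$, which diagonalizes in the Taylor coefficients with no further work: one gets $\sum_k|a_k|^2\bigl(1-2C_\delta\frac{k}{k+1}\bigr)\leq(1-C_\delta)\sum_k|a_k|^2$. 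You instead keep the sharper weight $m(E_\delta)\log\frac{1+\delta|w|}{|w|+\delta}$ via the pseudohyperbolic triangle inequality, at the price of having to verify the moment inequalities $I_k\geq 0$, which (as you correctly observe) cannot be done pointwise and require the integration by parts; your computation there checks out, including the final reduction to $\frac{1}{2k+3}\geq\frac{1}{4(k+1)}$. Both routes yield exactly the same constant $C_\delta$; the paper's linearization is shorter, while your version retains a genuinely stronger pointwise estimate on $D(w)$ near $w=0$ that is simply not needed for the stated conclusion.
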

\begin{proof}
	Following the proof of \cite[Thm.~3.2]{Shapiro00}, we begin by defining
	\[\varphi_w(z) := \psi_w \circ \varphi(z) = \frac{w-\varphi(z)}{1-\overline{w}\varphi(z)},\]
	for $w \in \mathbb{D}$ and $z\in\mathbb{T}$, where $\psi_w$ denotes the M\"obius transformation \eqref{eq:mobius}. An elementary computation yields that if $z \in E_\delta$ then
	\[1-|\varphi_w(z)|^2 \geq \frac{1-\delta}{1+\delta}(1-|w|^2).\]
	Combined with the inequality $1-x\leq \log \frac{1}{x}$, valid for $0 < x < 1$, we deduce that 
	\begin{equation}\label{eq:phiwzest} 
		\log{|\varphi_w(z)|} \leq - \frac{1}{2}\frac{1-\delta}{1+\delta}(1-|w|^2), \qquad z \in E_\delta. 
	\end{equation}
	By Jensen's formula and Fatou's lemma we obtain that if $w\neq0$, then
	\[N_{\varphi}(w) \leq \log{\frac{1}{|w|}} + \int_{\mathbb{T}} \log|\varphi_w(z)|\,dm(z) \leq \log{\frac{1}{|w|}} + \int_{E_\delta} \log|\varphi_w(z)|\,dm(z),\]
	since $|\varphi_w(z)|\leq1$ for almost every $z \in \mathbb{T}$. Inserting \eqref{eq:phiwzest} into the latter integral thus yields that 
	\begin{equation}\label{eq:shapiroest} 
		N_\varphi(w) \leq \log{\frac{1}{|w|}}-\frac{1}{2}\frac{1-\delta}{1+\delta} m(E_\delta)(1-|w|^2) = \log \frac{1}{|w|} - C_\delta (1-|w|^2). 
	\end{equation}
	If $f(w)=\sum_{k\geq0} a_k w^k$, then a simple calculation shows that 
	\begin{multline*}
		2\int_{\mathbb{D}} |f'(w)|^2 \left(\log{\frac{1}{|w|}}-C_\delta(1-|w|^2)\right)\,dA(w) \\
		= \sum_{k=1}^\infty |a_k|^2 \left(1-2C_\delta\frac{k}{k+1}\right) \leq (1-C_\delta)\sum_{k=1}^\infty |a_k|^2. 
	\end{multline*}
	Hence, inserting \eqref{eq:shapiroest} into \eqref{eq:NC} yields \eqref{eq:sharpiro}. 
\end{proof}

The following special case of Lemma~\ref{lem:sharpiro} will be used in the next section to improve the upper bound in \eqref{eq:mpqbounds}. 
\begin{lemma}\label{lem:z2z} 
	Let $\psi(z)=z/(2-z)$. For every $f$ in $H^2(\mathbb{T})$ it holds that 
	\begin{equation}\label{eq:z2z} 
		\|\mathscr{C}_\psi f\|_{H^2(\mathbb{T})}^2 \leq \frac{|f(0)|^2+\|f\|_{H^2(\mathbb{T})}^2}{2}. 
	\end{equation}
\end{lemma}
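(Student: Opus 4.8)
The plan is to apply Lemma~\ref{lem:sharpiro} to the specific symbol $\psi(z) = z/(2-z)$, after verifying that $\psi$ satisfies the hypotheses of that lemma. First I would check that $\psi$ is an analytic self-map of $\mathbb{D}$ fixing the origin: clearly $\psi(0)=0$, and since $\psi$ is a M\"obius transformation, it suffices to confirm that $\psi$ maps $\mathbb{D}$ into $\mathbb{D}$. This is a routine computation---one checks that $\psi$ sends $\mathbb{T}$ (or at least $\partial\mathbb{D}$) into $\overline{\mathbb{D}}$, for instance by observing where the circle $|z|=1$ is mapped, or directly verifying $|\psi(z)|<1$ for $|z|<1$.

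The crux of the argument is to choose the threshold $\delta$ in Lemma~\ref{lem:sharpiro} so that the resulting constant $C_\delta = \tfrac{1}{2}\frac{1-\delta}{1+\delta}m(E_\delta)$ equals exactly $1/2$, which would turn \eqref{eq:sharpiro} into the desired \eqref{eq:z2z}. The natural candidate is $\delta = 1$, giving $E_1 = \{z\in\mathbb{T} : |\psi(z)|<1\}$. For a M\"obius self-map of the disc, $|\psi(z)|=1$ holds precisely at the (at most one) boundary point where $\psi$ maps $\mathbb{T}$ to $\mathbb{T}$, so $E_1$ should be all of $\mathbb{T}$ up to a measure-zero set, giving $m(E_1)=1$. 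However, with $\delta=1$ the factor $\frac{1-\delta}{1+\delta}$ vanishes, so this naive choice gives $C_1 = 0$ and only recovers the trivial bound $\|\mathscr{C}_\psi f\|^2 \le \|f\|^2$. The real work is therefore to optimize over $\delta \in (0,1)$: I would compute $m(E_\delta)$ explicitly as a function of $\delta$ for this particular $\psi$, substitute into $C_\delta = \tfrac{1}{2}\frac{1-\delta}{1+\delta}m(E_\delta)$, and find the value of $\delta$ maximizing $C_\delta$ (or at least achieving $C_\delta = 1/2$).

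I expect the main obstacle to be the explicit determination of $m(E_\delta)$, i.e.\ computing the arc-length measure of the set $\{e^{i\theta} : |\psi(e^{i\theta})| < \delta\}$ on the circle. For $z = e^{i\theta}$ one has $\psi(e^{i\theta}) = e^{i\theta}/(2-e^{i\theta})$, so $|\psi(e^{i\theta})|^2 = 1/|2-e^{i\theta}|^2 = 1/(5 - 4\cos\theta)$. The condition $|\psi(e^{i\theta})| < \delta$ then becomes $5 - 4\cos\theta > \delta^{-2}$, which translates into an explicit inequality on $\cos\theta$ and hence an arc on the circle whose normalized measure can be written down in closed form. Substituting this into $C_\delta$ and maximizing should, after a short calculus computation, yield $\sup_\delta C_\delta = 1/2$, attained at an interior value of $\delta$. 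Once this optimization confirms $C_\delta = 1/2$ is achievable, the conclusion \eqref{eq:z2z} follows immediately from \eqref{eq:sharpiro}.
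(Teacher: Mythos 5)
Your plan cannot produce the stated constant, and the obstruction is structural rather than computational. In Lemma~\ref{lem:sharpiro} the constant is $C_\delta = \tfrac{1}{2}\tfrac{1-\delta}{1+\delta}\,m(E_\delta)$, and since $m(E_\delta)\leq 1$ and $\tfrac{1-\delta}{1+\delta}<1$ for every $\delta>0$ (while $E_0=\emptyset$ forces $C_0=0$), one always has $C_\delta<\tfrac12$ strictly, for \emph{any} symbol. So no choice of $\delta$ can turn \eqref{eq:sharpiro} into \eqref{eq:z2z}. For this particular $\psi$ the situation is even worse quantitatively: since $|\psi(e^{i\theta})|^2 = 1/(5-4\cos\theta)$ ranges over $[1/9,1]$, one finds $E_\delta=\emptyset$ for $\delta\leq 1/3$ and $m(E_\delta) = 1-\tfrac{1}{\pi}\arccos\bigl(\tfrac{5-\delta^{-2}}{4}\bigr)$ for $\delta\in(1/3,1)$; optimizing $C_\delta$ over this range gives a supremum of roughly $0.097$ (attained near $\delta\approx 1/2$), yielding only $\|\mathscr{C}_\psi f\|^2 \leq 0.097\,|f(0)|^2 + 0.903\,\|f\|^2$. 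The remark following the lemma in the paper, which says that ``$C_\delta=1/2$ is best possible,'' refers to the sharpness of the constant in the \emph{conclusion} \eqref{eq:z2z}, not to a value achievable through Lemma~\ref{lem:sharpiro}.

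The paper's actual proof is a direct coefficient computation: expand $\psi(z)^k = \bigl(\tfrac{z}{2-z}\bigr)^k$ via the binomial series to get $\mathscr{C}_\psi f(z) = a_0 + \sum_{j\geq 1} 2^{-j} z^j \sum_{k=1}^{j}\binom{j-1}{k-1}a_k$, apply Cauchy--Schwarz to the inner sum using $\sum_{k=1}^{j}\binom{j-1}{k-1}=2^{j-1}$, and then swap the order of summation using the identity $\sum_{j\geq k}\binom{j-1}{k-1}2^{-j}=1$. This produces the factor $\tfrac12$ exactly, where the general Nevanlinna-counting-function machinery of Lemma~\ref{lem:sharpiro} necessarily loses too much. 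If you want to salvage a Lemma~\ref{lem:sharpiro}-flavoured argument you would need a sharper bound on $N_\psi$ tailored to this symbol, but the elementary series computation is both shorter and exact.
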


\begin{proof}
	We assume that $f(z) = \sum_{k\geq0}a_k z^k$ and compute
	\[\mathscr{C}_\psi f(z) = \sum_{k=0}^\infty a_k \left(\frac{z}{2-z}\right)^k = a_0 + \sum_{j=1}^\infty \frac{z^j}{2^j} \sum_{k=1}^j \binom{j-1}{k-1} a_k.\]
	Taking the norm and using the Cauchy--Schwarz inequality, we conclude that
	\begin{align*}
		\|\mathscr{C}_\psi f\|_{H^2(\mathbb{T})}^2 &= |a_0|^2 + \sum_{j=1}^\infty \frac{1}{4^j} \left|\sum_{k=1}^j \binom{j-1}{k-1} a_k \right|^2 \\ 
		&\leq |a_0|^2 + \sum_{j=1}^\infty \frac{1}{4^j} \left(\sum_{k=1}^j \binom{j-1}{k-1} |a_k|^2 \right) 2^{j-1} \\
		&=|a_0|^2 + \frac{1}{2} \sum_{k=1}^\infty |a_k|^2 \sum_{j=k}^\infty \binom{j-1}{k-1} \frac{1}{2^j} = |a_0|^2 + \frac{1}{2} \sum_{k=1}^\infty |a_k|^2,
	\end{align*}
	which is the desired estimate \eqref{eq:z2z}, since $a_0=f(0)$.
\end{proof}

\begin{remark}
	The estimate \eqref{eq:z2z} can also be extracted from \cite[Thm.~6.10]{HammondPhD}, but the presented proof is shorter and more direct. In the notation of Lemma~\ref{lem:sharpiro}, the constant $C_\delta = 1/2$ is the best possible for the symbol $\psi(z)=z/(2-z)$. This can be seen by testing $\mathscr{C}_\psi$ on $f(z) = 1/(1-rz)$ and letting $r\to 1^-$.
\end{remark}

On the basis of Theorem~\ref{thm:shapiro1}, using M\"obius transformations in a similar fashion to the derivation of \eqref{eq:littlewoodstrong} from \eqref{eq:littlewood}, Shapiro \cite{Shapiro00} also deduced the following theorem. We have modified the original statement slightly, for easier comparison with our Theorem~\ref{thm:inner}. 
\begin{theorem}[Shapiro] \label{thm:shapiro2} 
	Suppose that $\varphi$ is an analytic self-map of $\mathbb{D}$ with $\varphi(0) = w \neq 0$. Then the following are equivalent. 
	\begin{enumerate}
		\item[(a)] $\varphi$ is inner. 
		\item[(b)] $\|\mathscr{C}_\varphi f\|_{H^2(\mathbb{T})} = \|\mathscr{C}_{\psi_w} f\|_{H^2(\mathbb{T})}$ for every $f \in H^2(\mathbb{T})$. 
		\item[(c)] $\|\mathscr{C}_\varphi\| = \|\mathscr{C}_{\psi_w}\| = \frac{1+|w|}{1-|w|}.$ 
	\end{enumerate}
\end{theorem}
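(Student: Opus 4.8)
The plan is to reduce everything to the base point $0$ by means of the M\"obius transformation $\psi_w$ from \eqref{eq:mobius}, and then to invoke Theorem~\ref{thm:shapiro1} together with the uniform estimate of Lemma~\ref{lem:sharpiro}. Since $\psi_w$ interchanges $0$ and $w$, the map $\phi := \psi_w \circ \varphi$ is an analytic self-map of $\mathbb{D}$ fixing the origin, because $\phi(0) = \psi_w(w) = 0$. A direct computation shows that $\psi_w$ is an involution, whence $\varphi = \psi_w \circ \phi$ and therefore $\mathscr{C}_\varphi = \mathscr{C}_\phi \mathscr{C}_{\psi_w}$. Moreover, since $\psi_w$ is an automorphism of $\mathbb{D}$ carrying $\mathbb{T}$ onto $\mathbb{T}$, the symbol $\varphi$ is inner if and only if $\phi$ is inner. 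These two observations are the only facts about $\psi_w$ that I would use.

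With this setup the easy implications are immediate. If $\varphi$ is inner, then $\phi$ is inner and fixes the origin, so by Theorem~\ref{thm:shapiro1} the operator $\mathscr{C}_\phi$ is an isometry; combined with the factorization this gives $\|\mathscr{C}_\varphi f\|_{H^2(\mathbb{T})} = \|\mathscr{C}_\phi \mathscr{C}_{\psi_w} f\|_{H^2(\mathbb{T})} = \|\mathscr{C}_{\psi_w} f\|_{H^2(\mathbb{T})}$, which is (b). Taking the supremum over $f$ in the unit ball and recalling $\|\mathscr{C}_{\psi_w}\|^2 = \frac{1+|w|}{1-|w|}$ yields (c). Hence (a) $\implies$ (b) $\implies$ (c).

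The substance lies in (c) $\implies$ (a), which I would prove in contrapositive form: if $\varphi$ is not inner, then $\|\mathscr{C}_\varphi\| < \|\mathscr{C}_{\psi_w}\|$. Non-innerness of $\varphi$ is equivalent to non-innerness of $\phi$, so the set $\{z \in \mathbb{T}\,:\,|\phi(z)| < 1\}$ has positive measure and therefore $m(E_\delta) > 0$ for some $\delta \in (0,1)$, where $E_\delta = \{z \in \mathbb{T}\,:\,|\phi(z)| < \delta\}$. Applying Lemma~\ref{lem:sharpiro} to $\phi$ with this $\delta$, and writing $g = \mathscr{C}_{\psi_w} f$ so that $g(0) = f(\psi_w(0)) = f(w)$, I obtain
\[\|\mathscr{C}_\varphi f\|_{H^2(\mathbb{T})}^2 = \|\mathscr{C}_\phi g\|_{H^2(\mathbb{T})}^2 \leq C_\delta |f(w)|^2 + (1-C_\delta)\|\mathscr{C}_{\psi_w} f\|_{H^2(\mathbb{T})}^2,\]
with $C_\delta = \frac{1}{2}\frac{1-\delta}{1+\delta} m(E_\delta) > 0$. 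The reproducing kernel bound $|f(w)|^2 \leq (1-|w|^2)^{-1}\|f\|_{H^2(\mathbb{T})}^2$ and the estimate $\|\mathscr{C}_{\psi_w} f\|_{H^2(\mathbb{T})}^2 \leq \frac{1+|w|}{1-|w|}\|f\|_{H^2(\mathbb{T})}^2$ then reduce the matter to comparing the two constants $(1-|w|^2)^{-1}$ and $\frac{1+|w|}{1-|w|}$.

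The main obstacle is obtaining the strict inequality, and it is resolved by this last comparison: for $w \neq 0$ one has $(1-|w|^2)^{-1} = \frac{1}{(1-|w|)(1+|w|)} < \frac{(1+|w|)^2}{(1-|w|)(1+|w|)} = \frac{1+|w|}{1-|w|}$, so because $C_\delta > 0$ the convex combination above is strictly smaller than $\frac{1+|w|}{1-|w|}$. This forces $\|\mathscr{C}_\varphi\|^2 < \frac{1+|w|}{1-|w|} = \|\mathscr{C}_{\psi_w}\|^2$, completing the contrapositive. The genuinely delicate step is guaranteeing that Lemma~\ref{lem:sharpiro} supplies a \emph{strictly positive} constant $C_\delta$ as soon as $\varphi$ fails to be inner; once that uniform strengthening of Littlewood's principle is available, the strict comparison of constants is elementary.
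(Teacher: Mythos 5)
Your argument is correct and follows exactly the route the paper indicates for this theorem (which it attributes to Shapiro and only sketches): conjugate by the involution $\psi_w$ so that $\phi=\psi_w\circ\varphi$ fixes the origin, use Theorem~\ref{thm:shapiro1} and the factorization $\mathscr{C}_\varphi=\mathscr{C}_\phi\mathscr{C}_{\psi_w}$ for the easy implications, and invoke the quantitative Lemma~\ref{lem:sharpiro} together with the reproducing-kernel bound $|f(w)|^2\leq(1-|w|^2)^{-1}\|f\|_{H^2(\mathbb{T})}^2$ to obtain the strict inequality needed for (c) $\implies$ (a). Nothing further is required.
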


\section{Norm estimates for general symbols} \label{sec:inner} 
The goal of this section is to prove an analogue of Theorem~\ref{thm:shapiro2} for composition operators $\mathscr{C}_\varphi \colon \mathscr{H}^2 \to \mathscr{H}^2$ generated by Dirichlet series symbols $\varphi$ in $\mathscr{G}$. We will rely on Lemma~\ref{lem:sharpiro}, the Riemann mapping theorem, and two different tricks that let us transplant knowledge about composition operators on $H^2(\mathbb{T})$ to the Dirichlet series setting of $\mathscr{H}^2$.

For our first trick, we note that the Dirichlet series $g \in \mathscr{H}^2$ is supported on the integers of the form $n = 2^k$,
\[g(s) = \sum_{k=0}^\infty b_{2^k} 2^{-ks},\]
if and only if there is some $G \in H^2(\mathbb{T})$ such that $g(s)=G(2^{-s})$, and in this case $\|g\|_{\mathscr{H}^2}=\|G\|_{H^2(\mathbb{T})}$. In particular, if $\Phi$ maps $\mathbb{D}$ to $\mathbb{C}_{1/2}$ and $\varphi(s)=\Phi(2^{-s})$, then
\[\|\mathscr{C}_\varphi f\|_{\mathscr{H}^2}=\|\mathscr{C}_\Phi f\|_{H^2(\mathbb{T})}, \qquad f \in \mathscr{H}^2.\]

To demonstrate the virtue of this simple observation, we apply it together with Lemma~\ref{lem:z2z} and \eqref{eq:Brevigest} to obtain an improved upper bound for the composition operator generated by the symbol $\varphi(s)=c+r2^{-s}$, when $\mre{c}-1/2=r=\xi$, cf. Theorem~\ref{thm:mpq}. 
\begin{theorem}\label{thm:newupper} 
	Let $\varphi(s) = c + r2^{-s}$ with $\mre{c}-1/2=r=\xi\geq\alpha_0$, where $\alpha_0$ is the unique positive solution to $2=\alpha\zeta(1+\alpha)$. Then
	\[\|\mathscr{C}_\varphi\|^2 \leq \frac{\zeta(1+2\xi)+\zeta(1+\xi)}{2}.\]
\end{theorem}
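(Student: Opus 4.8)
The plan is to combine the factorization trick recalled at the start of this section with Lemma~\ref{lem:z2z} and the estimate \eqref{eq:Brevigest}. First I would normalize the symbol. Since $|n^{-i\mim{c}}|=1$, the map $f\mapsto f(\,\cdot\,+i\mim{c})$ is a unitary on $\mathscr{H}^2$ that intertwines $\mathscr{C}_\varphi$ with the composition operator of $\mre{c}+r2^{-s}$; hence I may assume that $c=1/2+\xi$ is real. Replacing $\varphi$ by the vertical limit $\varphi_\chi$ with $\chi_1=-1$, which leaves the norm unchanged (Section~\ref{sec:vertical}), I may further assume that $\varphi(s)=(\tfrac12+\xi)-\xi\,2^{-s}$. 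By the trick at the beginning of this section, $\|\mathscr{C}_\varphi f\|_{\mathscr{H}^2}=\|\mathscr{C}_\Phi f\|_{H^2(\mathbb{T})}$ for every $f\in\mathscr{H}^2$, where $\Phi(z)=(\tfrac12+\xi)-\xi z$ maps $\mathbb{D}$ into $\mathbb{C}_{1/2}$.

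The crucial observation is a factorization of $\Phi$. Let $\psi(z)=z/(2-z)$ be the self-map of $\mathbb{D}$ from Lemma~\ref{lem:z2z}, and let $\Phi_\xi(w)=\tfrac12+\xi\mathscr{T}(w)$ be the disc version of the extremal symbol $\varphi_\xi$ of \eqref{eq:alphasub}--\eqref{eq:Brevigest}, so that $\varphi_\xi(s)=\Phi_\xi(2^{-s})$. A direct computation with the Cayley transform gives $\mathscr{T}(\psi(z))=1-z$, and therefore
\[\Phi_\xi\circ\psi(z)=\tfrac12+\xi(1-z)=\Phi(z).\]
Consequently $\mathscr{C}_\Phi f=\mathscr{C}_\psi h$, where $h:=f\circ\Phi_\xi$. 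Since $\varphi_\xi$ is supported on the powers of $2$, the same is true of $\mathscr{C}_{\varphi_\xi}f$, so the trick identifies $h$ with an element of $H^2(\mathbb{T})$ satisfying $\|h\|_{H^2(\mathbb{T})}=\|\mathscr{C}_{\varphi_\xi}f\|_{\mathscr{H}^2}$ and $h(0)=f(\Phi_\xi(0))=f(c)$.

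It then remains to assemble the estimates. Applying Lemma~\ref{lem:z2z} to $h$ yields
\[\|\mathscr{C}_\varphi f\|_{\mathscr{H}^2}^2=\|\mathscr{C}_\psi h\|_{H^2(\mathbb{T})}^2\leq\frac{|f(c)|^2+\|\mathscr{C}_{\varphi_\xi}f\|_{\mathscr{H}^2}^2}{2}.\]
For the first term I would use the reproducing kernel bound $|f(c)|^2=|\langle f,K_c\rangle_{\mathscr{H}^2}|^2\leq\zeta(2c)\,\|f\|_{\mathscr{H}^2}^2=\zeta(1+2\xi)\,\|f\|_{\mathscr{H}^2}^2$, valid since $c=1/2+\xi$ is real. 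For the second term I would invoke the upper bound in \eqref{eq:Brevigest} with $\alpha=\xi$, namely $\|\mathscr{C}_{\varphi_\xi}\|^2\leq\max(2/\xi,\zeta(1+\xi))$. Here the hypothesis $\xi\geq\alpha_0$ enters: because $\sigma\mapsto(\sigma-1)\zeta(\sigma)$ is increasing (the $k=1$ case of Lemma~\ref{lem:dkzeta}), the function $\alpha\mapsto\alpha\zeta(1+\alpha)$ is increasing, so $\xi\geq\alpha_0$ forces $2/\xi\leq\zeta(1+\xi)$ and hence $\|\mathscr{C}_{\varphi_\xi}\|^2\leq\zeta(1+\xi)$. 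Combining the two bounds gives $\|\mathscr{C}_\varphi f\|_{\mathscr{H}^2}^2\leq\tfrac12\big(\zeta(1+2\xi)+\zeta(1+\xi)\big)\|f\|_{\mathscr{H}^2}^2$, and taking the supremum over the unit ball of $\mathscr{H}^2$ finishes the proof.

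The main obstacle, and the only genuinely new step, is spotting the factorization $\Phi=\Phi_\xi\circ\psi$: it is precisely this identity that lets the sharp, ``non-inner'' refinement of Littlewood's principle in Lemma~\ref{lem:z2z} act on the extremal symbol $\varphi_\xi$. Everything else is bookkeeping with the trick, the reproducing kernel, and \eqref{eq:Brevigest}; the role of the threshold $\alpha_0$ is simply to guarantee that the $\varphi_\xi$-estimate is governed by its $\zeta(1+\xi)$ branch rather than the $2/\xi$ branch.
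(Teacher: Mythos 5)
Your proposal is correct and follows essentially the same route as the paper: normalize $c$ to be real, factor the disc symbol as $\Phi_\xi\circ\psi$ with $\psi(z)=z/(2-z)$, apply Lemma~\ref{lem:z2z} to $f\circ\Phi_\xi$, and finish with the reproducing-kernel bound for $|f(c)|^2$ and the upper bound in \eqref{eq:Brevigest}, with $\xi\geq\alpha_0$ selecting the $\zeta(1+\xi)$ branch. Your explicit handling of the sign via the vertical limit $\chi_1=-1$ is a small clarification the paper leaves implicit, but the argument is otherwise identical.
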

\begin{proof}
	Since vertical translations are isometries on $\mathscr{H}^2$, we may replace $\varphi(s)$ by $\varphi(s)+i\tau$ for $\tau\in\mathbb{R}$ without changing $\|\mathscr{C}_\varphi\|$. Hence we may assume that $c$ is real. Let
	\[\Phi(z)= \frac{1}{2}+\xi(1-z) \qquad \text{and} \qquad \Phi_\xi(z)= \frac{1}{2}+\xi\frac{1-z}{1+z}.\]
	These are conformal maps from $\mathbb{D}$ to $\mathbb{D}(1/2+\xi,\xi)$ and $\mathbb{C}_{1/2}$, respectively, satisfying that $\Phi(0)=\Phi_\xi(0)=1/2+\xi$. A computation yields that
	\[\Phi_\xi^{-1} \circ \Phi(z) = \frac{2}{2-z}=\psi(z).\]
	Let $f$ be a Dirichlet polynomial. Note that $f \circ \varphi$ is supported on the integers of the form $n=2^k$. Hence,
	\[\|\mathscr{C}_\varphi f\|_{\mathscr{H}^2}^2 = \|f\circ \Phi\|_{H^2(\mathbb{T})}^2 = \|f \circ \Phi_\xi \circ \psi\|_{H^2(\mathbb{T})}^2 = \|\mathscr{C}_\psi (f \circ \Phi_\xi) \|_{H^2(\mathbb{T})}^2.\]
	Since $f$ is a Dirichlet polynomial, clearly $f \circ \Phi_\xi$ is in $H^2(\mathbb{T})$. Lemma~\ref{lem:z2z} thus gives us that
	\[\|\mathscr{C}_\varphi f\|_{\mathscr{H}^2}^2 \leq \frac{|f\circ \Phi_\xi(0)|^2+\|\mathscr{C}_{\Phi_\xi} f\|_{H^2(\mathbb{T})}^2}{2}= \frac{|f(1/2+\xi)|^2+\|\mathscr{C}_{\varphi_\xi} f\|_{\mathscr{H}^2}^2}{2},\]
	where, as before, $\varphi_{\xi}(s) = \Phi_\xi(2^{-s})$. The proof is completed by applying the Cauchy--Schwarz inequality, the upper bound in \eqref{eq:Brevigest}, and the density of Dirichlet polynomials in $\mathscr{H}^2$. 
\end{proof}

Our second trick takes its starting point in Lemma~\ref{lem:carlesoncomp}. To make use of it, we introduce a dummy variable $w \in \mathbb{T}$, acting coordinate-wise on $\chi \in \mathbb{T}^\infty$, allowing us to apply Lemma~\ref{lem:sharpiro} to the Hardy space of functions in the dummy variable. We begin with the following lemma, closely related to \cite[Thm. 4.1]{HLS97}. 
\begin{lemma}\label{lem:wtrick} 
	Suppose that $f \in H^2(\mathbb{T}^\infty)$. For $w \in \mathbb{T}$ and $\chi \in \mathbb{T}^\infty$, let
	\[\chi_w = (w \chi_1,w\chi_2,\ldots).\]
	Then, for almost every $\chi\in\mathbb{T}^\infty$, the function $F_\chi(w) := f(\chi_w)$ is in $H^2(\mathbb{T})$ and
	\[F_\chi(0) = \int_{\mathbb{T}^\infty} f(\chi')\,dm_\infty(\chi').\]
\end{lemma}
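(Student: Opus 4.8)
The plan is to reduce everything to the multiplicative structure of the Fourier expansion $f = \sum_{n\geq1} a_n\chi(n)$ and to the observation that the substitution $\chi\mapsto\chi_w$ is a translation of the group $\mathbb{T}^\infty$. First I would record the algebraic identity that drives the proof: since $n\mapsto\chi(n)$ is completely multiplicative with $\chi(p_j)=\chi_j$, replacing each $\chi_j$ by $w\chi_j$ gives
\[\chi_w(n) = w^{\Omega(n)}\chi(n),\]
where $\Omega(n)$ is the total number of prime factors of $n$ counted with multiplicity. Grouping the Fourier series of $f$ by the value of $\Omega(n)$ yields the homogeneous decomposition $f=\sum_{k\geq0} f_k$ with $f_k=\sum_{\Omega(n)=k} a_n\chi(n)$ and $\sum_{k\geq0}\|f_k\|_{L^2(\mathbb{T}^\infty)}^2=\|f\|_{H^2(\mathbb{T}^\infty)}^2$, and the identity above turns into the homogeneity relation $f_k(\chi_w)=w^k f_k(\chi)$. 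This is the mechanism that will confine $F_\chi$ to nonnegative powers of $w$.

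The main obstacle is that $f$ is only defined $m_\infty$-almost everywhere, so I must first make sense of $w\mapsto f(\chi_w)$ for almost every $\chi$. To this end I would view the map $\Psi\colon\mathbb{T}^\infty\times\mathbb{T}\to\mathbb{T}^\infty$, $\Psi(\chi,w)=\chi_w$, as the translation of $\chi$ by the diagonal element $\iota(w)=(w,w,w,\ldots)$. For each fixed $w$ this is a translation of $\mathbb{T}^\infty$, hence preserves the Haar measure $m_\infty$; integrating in $w$ shows that $\Psi$ pushes $m_\infty\times m$ forward to $m_\infty$. Composition with $\Psi$ is therefore an isometry of $L^2(\mathbb{T}^\infty)$ into $L^2(\mathbb{T}^\infty\times\mathbb{T})$, so $f\circ\Psi$ lies in $L^2(\mathbb{T}^\infty\times\mathbb{T})$ with the $L^2$-convergent expansion $f(\chi_w)=\sum_{k\geq0} f_k(\chi)\,w^k$. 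By Fubini's theorem this forces $\sum_{k\geq0}|f_k(\chi)|^2<\infty$ and $F_\chi(w)=f(\chi_w)\in L^2(\mathbb{T})$ for almost every $\chi$.

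It then remains to upgrade $L^2(\mathbb{T})$ to $H^2(\mathbb{T})$ and to identify $F_\chi(0)$, which is routine given the expansion. Since only the powers $w^k$ with $k=\Omega(n)\geq0$ occur, the Fourier coefficients of $F_\chi$ of negative index vanish, so $F_\chi\in H^2(\mathbb{T})$ for almost every $\chi$; more precisely the coefficient of index $m$ is $f_m(\chi)$. In particular the constant term is $f_0(\chi)=a_1$, because $n=1$ is the only integer with $\Omega(n)=0$. Finally, orthogonality of the characters gives $\int_{\mathbb{T}^\infty} f\,dm_\infty=a_1$, so $F_\chi(0)=\int_{\mathbb{T}^\infty} f(\chi')\,dm_\infty(\chi')$, as claimed. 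As a consistency check, Tonelli's theorem applied to the homogeneous pieces gives $\int_{\mathbb{T}^\infty}\|F_\chi\|_{H^2(\mathbb{T})}^2\,dm_\infty(\chi)=\sum_{k\geq0}\|f_k\|_{L^2(\mathbb{T}^\infty)}^2=\|f\|_{H^2(\mathbb{T}^\infty)}^2$, recovering the almost-everywhere finiteness independently.
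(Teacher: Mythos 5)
Your proof is correct, and it rests on the same two pillars as the paper's: the rotation-invariance/Fubini step showing $F_\chi\in L^2(\mathbb{T})$ for almost every $\chi$, and the identity $\chi_w(n)=w^{\Omega(n)}\chi(n)$ with $\Omega(n)\geq 0$ (the paper writes $\kappa(n)$). Where you genuinely differ is in how the vanishing of the negative Fourier modes is organized. The paper fixes $k<0$ and shows that the $L^2(\mathbb{T}^\infty)$-function $\chi\mapsto\widehat{F_\chi}(k)$ is zero almost everywhere by checking that all of its Fourier coefficients over $\mathbb{Q}_+$ vanish; you instead introduce the homogeneous decomposition $f=\sum_{k\geq 0}f_k$ graded by $\Omega(n)$, expand $f\circ\Psi=\sum_{k\geq0}f_k(\chi)w^k$ in $L^2(\mathbb{T}^\infty\times\mathbb{T})$, and read off the slices. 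Your route is a little more structural and buys the extra identities $\widehat{F_\chi}(k)=f_k(\chi)$ and $\int_{\mathbb{T}^\infty}\|F_\chi\|_{H^2(\mathbb{T})}^2\,dm_\infty(\chi)=\|f\|_{H^2(\mathbb{T}^\infty)}^2$, whereas the paper's computation is more self-contained at the level of integrals. The one point where you are a touch breezy is the passage from the $L^2$-identity on the product space to statements about individual slices $F_\chi$: to conclude $\widehat{F_\chi}(k)=f_k(\chi)$ (and in particular $=0$ for $k<0$) for almost every $\chi$, one should, for each fixed $k$, test $\chi\mapsto\widehat{F_\chi}(k)$ against arbitrary elements of $L^2(\mathbb{T}^\infty)$ and apply Fubini, then take a countable union over $k$; this is precisely the computation the paper carries out, so the gap is one of exposition rather than substance.
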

\begin{proof}
	The fact that $m_\infty$ is invariant under rotations and Fubini's theorem yields that
	\[\|f\|_{H^2(\mathbb{T}^\infty)}^2 = \int_{\mathbb{T}} \int_{\mathbb{T}^\infty} |f(\chi_w)|^2\,dm_\infty(\chi) \, dm(w) = \int_{\mathbb{T}^\infty}\int_{\mathbb{T}} |F_\chi(w)|^2\,dm(w) \, dm_\infty(\chi). \]
	We conclude that $F_\chi \in L^2(\mathbb{T})$ for almost every $\chi$. To additionally see that $F_\chi$ is in $H^2(\mathbb{T})$, we need to verify that $\widehat{F_\chi}(k)=0$ for almost every $\chi$ and every $k < 0 $, where
	\[\widehat{F_\chi}(k) = \int_{\mathbb{T}} F_\chi(w) \overline{w^{k}}\,dm(w).\]
	To show this, note that for every $k<0$ and $n \in \mathbb{N}$ we have that 
	\begin{align*}
		\int_{\mathbb{T}^\infty} \widehat{F_\chi}(k) \overline{\chi(n)}\,dm_\infty(\chi) &= \int_{\mathbb{T}} \left( \int_{\mathbb{T}^\infty} f(\chi_w) \overline{\chi(n)} \, dm_\infty(\chi)\right) \overline{w^k} \, dm(w) \\
		&= \int_{\mathbb{T}} \left( \int_{\mathbb{T}^\infty} f(\chi) \overline{\chi(n)} \, dm_\infty(\chi)\right) \overline{w^{k-\kappa(n)}} \, dm(w) = 0, 
	\end{align*}
	where $\kappa(n) \geq 0$ denotes the number of prime factors of $n$, counting multiplicities. Similarly,
	\[\int_{\mathbb{T}^\infty} \widehat{F_\chi}(k) \overline{\chi(q)}\,dm_\infty(\chi) = 0, \qquad q \in \mathbb{Q}_+ \backslash \mathbb{N},\]
	since the corresponding Fourier coefficient of $f$ is zero. Hence, the function $\chi \mapsto \widehat{F_\chi}(k)$ is zero for almost every $\chi \in \mathbb{T}^\infty$, since all of its Fourier coefficients vanish.
	
	With $k = 0$, the same calculation also shows that
	\[F_\chi(0) = \widehat{F_\chi}(0) = \int_{\mathbb{T}} F_\chi(w)\,dm(w) = \int_{\mathbb{T}^\infty} f(\chi')\,dm_\infty(\chi'),\]
	for almost every $\chi \in \mathbb{T}^\infty$. 
\end{proof}

Our next goal is to extend Lemma~\ref{lem:sharpiro} to composition operators on $\mathscr{H}^2$, with a statement adapted to a domain $\Omega \supseteq \varphi(\mathbb{C}_0^\ast)$. For simplicity, considering $\Omega\subseteq\mathbb{C}_{1/2}$ as an open set on the Riemann sphere $\mathbb{C}^\ast$, we assume that $\partial \Omega$ is a Jordan curve on $\mathbb{C}^\ast$. Then any Riemann map $\Theta$ of $\mathbb{D}$ onto $\Omega$ extends to a homeomorphism of $\overline{\mathbb{D}}$ onto $\overline{\Omega}$. 
\begin{lemma}\label{lem:H2sharpiro} 
	Suppose that $\Omega \subseteq \mathbb{C}_{1/2}$ has Jordan curve boundary in $\mathbb{C}^\ast$ and that $\varphi\in\mathscr{G}$ maps $\mathbb{C}_0^\ast$ into $\Omega$ with $\varphi(+\infty)=\omega$. Let $\Theta$ be a Riemann map from $\mathbb{D}$ to $\Omega$ with $\Theta(0)=\omega$, and for $0\leq \delta \leq 1$, set
	\[\mathscr{E}_\delta := \left\{\chi \in \mathbb{T}^\infty\,:\, |\Theta^{-1}\circ \varphi^\ast(\chi)| < \delta\right\}.\]
	Then 
	\begin{equation}\label{eq:H2sharpiro} 
		\|\mathscr{C}_\varphi f\|_{\mathscr{H}^2}^2 \leq C_\delta |f(\omega)|^2 + (1-C_\delta)\|\mathscr{C}_\psi f\|_{\mathscr{H}^2}^2, \qquad f \in \mathscr{H}^2, 
	\end{equation}
	where $C_\delta = \frac{1}{2}\frac{1-\delta}{1+\delta}m_\infty(\mathscr{E}_\delta)$ and $\psi(s)=\Theta(2^{-s})$. 
\end{lemma}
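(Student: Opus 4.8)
The plan is to derive \eqref{eq:H2sharpiro} as an ``averaged'' form of the classical estimate in Lemma~\ref{lem:sharpiro}, applied to a family of analytic self-maps of $\mathbb{D}$ indexed by $\chi \in \mathbb{T}^\infty$. Throughout I would take $f$ to be a Dirichlet polynomial and extend to all of $\mathscr{H}^2$ at the very end, since with $C_\delta$ fixed each of the three quantities in \eqref{eq:H2sharpiro} depends continuously on $f$ in the $\mathscr{H}^2$-norm (both $\mathscr{C}_\varphi$ and $\mathscr{C}_\psi$ are bounded, and $f\mapsto f(\omega)$ is a bounded functional). Writing $g := f\circ\Theta$, the first trick from the start of Section~\ref{sec:inner} gives $g\in H^2(\mathbb{T})$ with $\|g\|_{H^2(\mathbb{T})}^2=\|\mathscr{C}_\psi f\|_{\mathscr{H}^2}^2$ and $g(0)=f(\Theta(0))=f(\omega)$.

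First I would set $B:=\Theta^{-1}\circ\varphi$ and record, exactly as for the Cayley transform in Section~\ref{sec:vertical}, that $B$ is a Dirichlet series in $\mathscr{H}^\infty$ with $\|B\|_{\mathscr{H}^\infty}\le 1$, constant coefficient $B(+\infty)=\Theta^{-1}(\omega)=0$, and boundary values $B^\ast=\Theta^{-1}\circ\varphi^\ast$ almost everywhere. Writing $B(s)=\sum_n \beta_n n^{-s}$ with $\beta_1=0$ and using $\chi_w(n)=w^{\kappa(n)}\chi(n)$, the Bohr lift of $B$ restricted to the ray $z_j=w\chi_j$ yields, for each fixed $\chi$, the power series $B_\chi(w):=B^\ast(\chi_w)=\sum_{k\ge 1}\big(\sum_{\kappa(n)=k}\beta_n\chi(n)\big)w^k$. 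Since $\|B\|_{\mathscr{H}^\infty}\le 1$ bounds the Bohr lift by $1$ on the polydisc, each $B_\chi$ is an analytic self-map of $\mathbb{D}$ fixing the origin, with boundary values $B_\chi(w)=\Theta^{-1}(\varphi^\ast(\chi_w))$ for almost every $w\in\mathbb{T}$.

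Next I would apply Lemma~\ref{lem:wtrick} to the boundary function $h^\ast=(f\circ\varphi)^\ast=f\circ\varphi^\ast\in H^2(\mathbb{T}^\infty)$, the identification of boundary values being the one established in the proof of Lemma~\ref{lem:carlesoncomp}. This produces, for almost every $\chi$, the function $H_\chi(w)=f(\varphi^\ast(\chi_w))\in H^2(\mathbb{T})$, and the computation in that lemma delivers both the decomposition $\|\mathscr{C}_\varphi f\|_{\mathscr{H}^2}^2=\int_{\mathbb{T}^\infty}\|H_\chi\|_{H^2(\mathbb{T})}^2\,dm_\infty(\chi)$ and the value $H_\chi(0)=\int_{\mathbb{T}^\infty}f\circ\varphi^\ast\,dm_\infty=f(\omega)$, the last equality because $f(\omega)$ is the constant coefficient of the Dirichlet series $f\circ\varphi$. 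Comparing boundary values shows $H_\chi=g\circ B_\chi=\mathscr{C}_{B_\chi}g$, since $\Theta(B_\chi(w))=\varphi^\ast(\chi_w)$. I then invoke Lemma~\ref{lem:sharpiro} pointwise in $\chi$, with symbol $B_\chi$ and test function $g$, to obtain
\[\|H_\chi\|_{H^2(\mathbb{T})}^2\le C_\delta(\chi)\,|f(\omega)|^2+(1-C_\delta(\chi))\,\|\mathscr{C}_\psi f\|_{\mathscr{H}^2}^2,\]
where $C_\delta(\chi)=\tfrac12\tfrac{1-\delta}{1+\delta}\,m(E_\delta(\chi))$ and $E_\delta(\chi)=\{w\in\mathbb{T}:|\Theta^{-1}(\varphi^\ast(\chi_w))|<\delta\}=\{w\in\mathbb{T}:|B_\chi(w)|<\delta\}$. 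Integrating in $\chi$ and using $|f(\omega)|^2\le\|\mathscr{C}_\psi f\|_{\mathscr{H}^2}^2$, the claim follows once I check that $\int_{\mathbb{T}^\infty}C_\delta(\chi)\,dm_\infty(\chi)=C_\delta$; by Fubini this reduces to $\int_{\mathbb{T}^\infty}m(E_\delta(\chi))\,dm_\infty(\chi)=m_\infty(\mathscr{E}_\delta)$, which is immediate from the rotation invariance of $m_\infty$ under $\chi\mapsto\chi_w$ applied to the indicator of $\{|\Theta^{-1}\circ\varphi^\ast|<\delta\}$.

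The main obstacle I anticipate is the structural step: verifying that $\Theta^{-1}\circ\varphi$ is a genuine $\mathscr{H}^\infty$ Dirichlet series (the natural extension of the Cayley computation in Section~\ref{sec:vertical}, using that $\Theta^{-1}$ extends to a homeomorphism of $\overline{\Omega}$ onto $\overline{\mathbb{D}}$ on the Riemann sphere), so that its Bohr lift furnishes, for every $\chi$, an honest analytic self-map $B_\chi$ of $\mathbb{D}$ fixing the origin, and then matching boundary values carefully enough to identify $H_\chi$ with $\mathscr{C}_{B_\chi}g$ and the slice $E_\delta(\chi)$ with a section of $\mathscr{E}_\delta$. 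Once this dictionary between $\varphi$ and the family $(B_\chi)$ is in place, the averaging via Lemma~\ref{lem:wtrick} and the pointwise application of Lemma~\ref{lem:sharpiro} are routine, as is the concluding measure identity.
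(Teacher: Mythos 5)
Your proposal is correct and follows essentially the same route as the paper: slice $f\circ\varphi^\ast$ along the diagonal rays $\chi_w$ via Lemma~\ref{lem:wtrick}, apply Lemma~\ref{lem:sharpiro} pointwise in $\chi$ to the self-maps $w\mapsto\Theta^{-1}\circ\varphi^\ast(\chi_w)$ fixing the origin, and integrate using Fubini and the rotation invariance of $m_\infty$ to identify $\int_{\mathbb{T}^\infty}m(E_\delta(\chi))\,dm_\infty(\chi)$ with $m_\infty(\mathscr{E}_\delta)$. The only cosmetic differences are that the paper justifies the self-map property directly from Lemma~\ref{lem:wtrick}, the maximum principle and the homeomorphic extension of $\Theta$ rather than through the Bohr lift of $\Theta^{-1}\circ\varphi\in\mathscr{H}^\infty$ (an equivalent route), and that your appeal to $|f(\omega)|^2\leq\|\mathscr{C}_\psi f\|_{\mathscr{H}^2}^2$ is unnecessary, since integrating the pointwise convex combination in $\chi$ already yields the stated bound exactly.
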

\begin{proof}
	Let $f$ be a Dirichlet polynomial. As in the proof Theorem~\ref{thm:newupper}, we will rely on the fact that $\|f\circ \psi\|_{\mathscr{H}^2} = \|f \circ \Theta\|_{H^2(\mathbb{T})}$. We use Lemma~\ref{lem:carlesoncomp}, and that $\Theta$ is a Riemann map, to see that 
	\begin{equation}\label{eq:Thetain} 
		\|\mathscr{C}_\varphi f\|_{\mathscr{H}^2}^2 = \int_{\mathbb{T}^\infty} |f\circ \varphi^\ast(\chi)|^2\,dm_\infty(\chi) = \int_{\mathbb{T}^\infty} |f \circ \Theta \circ \Theta^{-1} \circ \varphi^\ast(\chi)|^2\,dm_\infty(\chi). 
	\end{equation}
	For $w \in \mathbb{T}$, set $\chi_w=(w\chi_1,w\chi_2,\ldots)$. Since $\Theta$ extends to a homeomorphism of $\overline{\mathbb{D}}$ onto $\overline{\Omega}$, Lemma~\ref{lem:wtrick} and the maximum principle implies that the function
	\[\Phi_\chi(w) := \Theta^{-1} \circ \varphi^\ast(\chi_w)\]
	is an analytic self-map of $\mathbb{D}$ with $\Phi_\chi(0)=0$, for almost every $\chi \in \mathbb{T}^\infty$. The measure $m_\infty$ is rotationally invariant, so from \eqref{eq:Thetain} and Fubini's theorem we find that 
	\begin{equation}\label{eq:dolores} 
		\|\mathscr{C}_\varphi f\|_{\mathscr{H}^2} = \int_{\mathbb{T}^\infty}\int_{\mathbb{T}}|f\circ \Theta \circ \Phi_\chi(w)|^2\,dm(w) \, dm_\infty(\chi). 
	\end{equation}
	Let $\chi$ belong to the set of full measure such that $\Phi_\chi$ extends to an analytic self-map of $\mathbb{D}$. Lemma~\ref{lem:sharpiro} yields that 
	\begin{multline*}
		\int_{\mathbb{T}}|f\circ \Theta \circ \Phi_\chi(w)|^2\,dm(w) \leq C_\delta(\chi) |f\circ\Theta(0)|^2 + (1-C_\delta(\chi))\|f \circ \Theta\|_{H^2(\mathbb{T})}^2 \\
		= C_\delta(\chi) |f(\omega)|^2 + (1-C_\delta(\chi))\|\mathscr{C}_\psi f\|_{\mathscr{H}^2}^2, 
	\end{multline*}
	where $C_\delta(\chi) = \frac{1}{2}\frac{1-\delta}{1+\delta} m(E_\delta(\chi))$, and $E_\delta(\chi)=\{w \in \mathbb{T} \,:\, |\Phi_\chi(w)| < \delta\}$. Note that
	\[\int_{\mathbb{T}^\infty} m(E_\delta(\chi))\,dm_\infty(\chi) = m\times m_\infty \left(\left\{(w,\chi)\in\mathbb{T}\times\mathbb{T}^\infty\,:\, |\Phi_\chi(w)| < \delta \right\}\right)= m_\infty(\mathscr{E}_\delta),\]
	by Fubini's theorem and the rotational invariance of $m_\infty$. Inserting the last estimate into \eqref{eq:dolores} thus yields \eqref{eq:H2sharpiro}, at first for Dirichlet polynomials $f$, and by density for all $f \in \mathscr{H}^2$. 
\end{proof}
When $\Omega = \mathbb{D}(c,r)$ and $\omega =c$, the function $\psi$ in the statement is of course given by $\psi(s) = c + r2^{-s}$. Hence \eqref{eq:H2sharpiro} extends the estimate \eqref{eq:effectivesubord} of Theorem~\ref{thm:affinesubord} to non-affine maps, with some loss of precision.

We now present the main result of this section, which identifies the symbols with prescribed mapping properties that are maximal with respect to subordination of composition operators. We say that a Dirichlet series $f \in \mathscr{H}^2$ is inner if $|f^\ast(\chi)| = 1$ for almost every $\chi \in \mathbb{T}^\infty$. 
\begin{theorem}\label{thm:inner} 
	Suppose that $\Omega \subseteq \mathbb{C}_{1/2}$ has Jordan curve boundary in $\mathbb{C}^\ast$ and that $\varphi\in\mathscr{G}$ maps $\mathbb{C}_0^\ast$ into $\Omega$ with $\varphi(+\infty)=\omega$. Let $\Theta$ be a Riemann map from $\mathbb{D}$ to $\Omega$ with $\Theta(0)=\omega$ and set $\psi(s)=\Theta(2^{-s})$. Then
	\[\|\mathscr{C}_\varphi f\|\leq \|\mathscr{C}_\psi f\|, \qquad f \in \mathscr{H}^2.\]
	Furthermore, the following are equivalent. 
	\begin{enumerate}
		\item[(a)] $\Theta^{-1} \circ \varphi$ is inner. 
		\item[(b)] $\|\mathscr{C}_\varphi f\|_{\mathscr{H}^2} = \|\mathscr{C}_{\psi} f\|_{\mathscr{H}^2}$ for every $f \in \mathscr{H}^2$. 
		\item[(c)] $\|\mathscr{C}_\varphi\| = \|\mathscr{C}_{\psi}\|$. 
	\end{enumerate}
\end{theorem}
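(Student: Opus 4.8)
The plan is to read off all three parts from three earlier ingredients: the uniform Shapiro-type estimate of Lemma~\ref{lem:H2sharpiro}, the classical isometry criterion of Theorem~\ref{thm:shapiro1}, and the strict point-evaluation bound of Theorem~\ref{thm:pointeval}. First, the subordination inequality $\|\mathscr{C}_\varphi f\|\leq\|\mathscr{C}_\psi f\|$ is nothing but the case $\delta=0$ of \eqref{eq:H2sharpiro}: then $\mathscr{E}_0=\emptyset$, so $C_0=0$ and \eqref{eq:H2sharpiro} reduces to $\|\mathscr{C}_\varphi f\|_{\mathscr{H}^2}^2\leq\|\mathscr{C}_\psi f\|_{\mathscr{H}^2}^2$ for every $f\in\mathscr{H}^2$. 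The implications (a) $\implies$ (b) and (b) $\implies$ (c) then complete the cycle, the latter being immediate upon taking the supremum over the unit ball of $\mathscr{H}^2$.

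For (a) $\implies$ (b) I would return to the identity \eqref{eq:dolores} established in the proof of Lemma~\ref{lem:H2sharpiro}, namely $\|\mathscr{C}_\varphi f\|_{\mathscr{H}^2}^2=\int_{\mathbb{T}^\infty}\int_{\mathbb{T}}|f\circ\Theta\circ\Phi_\chi(w)|^2\,dm(w)\,dm_\infty(\chi)$, where $\Phi_\chi(w)=\Theta^{-1}\circ\varphi^\ast(\chi_w)$ is, for almost every $\chi$, an analytic self-map of $\mathbb{D}$ fixing the origin. If $\Theta^{-1}\circ\varphi$ is inner, then $|\Theta^{-1}\circ\varphi^\ast|=1$ almost everywhere on $\mathbb{T}^\infty$; by Fubini and the rotation-invariance of $m_\infty$, this forces $w\mapsto\Phi_\chi(w)$ to be unimodular on $\mathbb{T}$ for almost every $\chi$, so each such $\Phi_\chi$ is inner with $\Phi_\chi(0)=0$. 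Theorem~\ref{thm:shapiro1} ((a) $\implies$ (b)) then makes $\mathscr{C}_{\Phi_\chi}$ an isometry on $H^2(\mathbb{T})$, so the inner integral equals $\|f\circ\Theta\|_{H^2(\mathbb{T})}^2=\|\mathscr{C}_\psi f\|_{\mathscr{H}^2}^2$; integrating in $\chi$ and using density of Dirichlet polynomials yields (b).

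The crux is (c) $\implies$ (a), which I would prove by contraposition. If $\Theta^{-1}\circ\varphi$ is not inner, then $m_\infty(\{|\Theta^{-1}\circ\varphi^\ast|<1\})>0$, and since this set is the increasing union of the $\mathscr{E}_{1-1/n}$, there is $\delta\in(0,1)$ with $m_\infty(\mathscr{E}_\delta)>0$, hence $C_\delta>0$ in Lemma~\ref{lem:H2sharpiro}. Bounding the point evaluation by $|f(\omega)|^2=|\langle f,K_\omega\rangle|^2\leq\zeta(2\mre\omega)\|f\|_{\mathscr{H}^2}^2$, where $\|K_\omega\|_{\mathscr{H}^2}^2=\zeta(2\mre\omega)$, and taking the supremum over the unit ball in \eqref{eq:H2sharpiro} gives $\|\mathscr{C}_\varphi\|^2\leq C_\delta\,\zeta(2\mre\omega)+(1-C_\delta)\|\mathscr{C}_\psi\|^2$. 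Now $\psi(s)=\Theta(2^{-s})$ is a nonconstant Dirichlet series in $\mathscr{G}$ with $\psi(+\infty)=\Theta(0)=\omega$, so Theorem~\ref{thm:pointeval} yields the strict inequality $\|\mathscr{C}_\psi\|^2>\zeta(2\mre\omega)$; together with $C_\delta>0$ this forces $\|\mathscr{C}_\varphi\|^2<\|\mathscr{C}_\psi\|^2$, so (c) fails.

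I expect this last implication to be the main obstacle, and the whole argument hinges on the interplay of two features that are genuinely needed. The first is the uniformity of Lemma~\ref{lem:H2sharpiro} in the non-innerness parameter $\delta$: it upgrades a merely pointwise inequality into a quantitative convex-combination bound with a positive gap $C_\delta$. The second is the strictness in Theorem~\ref{thm:pointeval}, which guarantees that $\|\mathscr{C}_\psi\|^2$ exceeds the point-evaluation norm $\zeta(2\mre\omega)$ by a definite amount; without strictness the convex combination could only be bounded by $\|\mathscr{C}_\psi\|^2$ and no contradiction would arise. One routine point to verify carefully along the way is that $\Theta^{-1}\circ\varphi$ lies in $\mathscr{H}^\infty\subseteq\mathscr{H}^2$, so that its boundary function $\Theta^{-1}\circ\varphi^\ast$ exists almost everywhere and the notion of ``inner'' in (a) is meaningful; this is implicit already in Lemma~\ref{lem:H2sharpiro} and can be taken for granted here.
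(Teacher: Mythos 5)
Your proposal is correct and follows essentially the same route as the paper: the subordination inequality and (a) $\implies$ (b) both rest on the identity \eqref{eq:dolores} combined with Littlewood's principle and Nordgren's part of Theorem~\ref{thm:shapiro1}, while (c) $\implies$ (a) is exactly the paper's combination of Lemma~\ref{lem:H2sharpiro} (with $C_\delta>0$ for some $\delta<1$ by non-innerness), the reproducing-kernel bound $|f(\omega)|^2\leq\zeta(2\mre{\omega})\|f\|_{\mathscr{H}^2}^2$, and the strict inequality of Theorem~\ref{thm:pointeval} applied to $\psi$. Obtaining the subordination inequality as the $\delta=0$ case of \eqref{eq:H2sharpiro}, rather than directly from \eqref{eq:dolores}, is a purely cosmetic difference.
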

\begin{proof}
	For $\chi \in \mathbb{T}^\infty$, define, as in the proof of Lemma~\ref{lem:H2sharpiro},
	\[\Phi_\chi(w) = \Theta^{-1} \circ \varphi^\ast(\chi_w), \qquad w \in \mathbb{T}.\]
	Since $\Phi_\chi \colon \mathbb{D} \to \mathbb{D}$ satisfies $\Phi_\chi(0) = 0$, it follows from \eqref{eq:dolores} and Littlewood's subordination principle \eqref{eq:littlewood} that
	\[\|\mathscr{C}_\varphi f\|_{\mathscr{H}^2} \leq \int_{\mathbb{T}^\infty}\int_{\mathbb{T}}|f\circ \Theta(w)|^2\,dm(w) \, dm_\infty(\chi) = \|f \circ \Theta\|^2_{H^2(\mathbb{T})} = \|\mathscr{C}_\psi f\|^2_{\mathscr{H}^2},\]
	for every Dirichlet polynomial $f$. If $\Theta^{-1} \circ \varphi$ is inner, then $\Phi_\chi \in H^2(\mathbb{T})$ is inner for almost every $\chi \in \mathbb{T}^\infty$, again with $\Phi_\chi(0)=0$. Hence by the part of Theorem~\ref{thm:shapiro1} due to Nordgren \cite{Nordgren68}, we know that $\mathscr{C}_{\Phi_\chi}$ is an isometry on $H^2(\mathbb{T})$. Inserting this into \eqref{eq:dolores} thus yields an equality in this case,
	\[\|\mathscr{C}_\varphi f\|_{\mathscr{H}^2}^2 = \|f\circ \Theta\|_{H^2(\mathbb{T})}^2 = \|\mathscr{C}_\psi f\|_{\mathscr{H}^2}^2.\]
	It remains to prove that (c) $\implies$ (a). Suppose that $\Theta^{-1} \circ \varphi$ is not inner, and let $f$ be a Dirichlet polynomial. Then it follows from Lemma~\ref{lem:H2sharpiro} and the Cauchy-Schwarz inequality that there is a constant $0<C<1$ such that
	\[\|\mathscr{C}_\varphi f\|_{\mathscr{H}^2}^2 \leq C|f(\omega)|^2 + (1-C) \|\mathscr{C}_\psi f\|_{\mathscr{H}^2}^2 \leq \|f\|_{\mathscr{H}^2}^2 \left(C\zeta(2\mre{\omega})+(1-C)\|\mathscr{C}_\psi\|^2\right).\]
	Combining this with Theorem~\ref{thm:pointeval} applied to $\mathscr{C}_\psi$, we obtain the strict inequality
	\[\|\mathscr{C}_\varphi\|^2 \leq C\zeta(2\mre{\omega})+(1-C)\|\mathscr{C}_\psi\|^2 < (1-C)\|\mathscr{C}_\psi\|^2+C\|\mathscr{C}_\psi\|^2 = \|\mathscr{C}_\psi\|^2. \qedhere\]
\end{proof}

In the case that $\Omega = \mathbb{C}_{1/2}$ and $\mre{\omega}=1/2+\alpha$, Theorem~\ref{thm:inner} sharpens \eqref{eq:alphasub}. The inequality \eqref{eq:alphasub} was originally used in \cite{Brevig17} to address Problem 3 in \cite{Hedenmalm04}. This problem asks about the maximal norm of a composition operator $\mathscr{C}_\varphi$, given that $\mre{\varphi(+\infty)} = 1/2 + \alpha$.

Theorem~\ref{thm:inner} is the complete analogue of Shapiro's Theorem~\ref{thm:shapiro2}. A corresponding analogue of Shapiro's Theorem~\ref{thm:shapiro1} would concern composition operators $\mathscr{C}_\varphi$ for symbols $\varphi \in \mathscr{G}$ with $c_0 \geq 1$, where
\[\varphi(s) = c_0 s + \sum_{n=1}^\infty c_n n^{-s} = c_0s + \varphi_0(s).\]
However, there is no simple way to transfer results between the cases $c_0 = 0$ and $c_0 \geq 1$. This is unlike the classical setting, where M\"obius transformations allow us to relate maps $\Phi \colon \mathbb{D} \to \mathbb{D}$ satisfying $\Phi(0) = w \neq 0$ with those satisfying $\Phi(0) = 0$. When $c_0 \geq 1$, Bayart \cite[Thm.~16]{Bayart02} has proven the analogue of Nordgren's theorem: $\mathscr{C}_\varphi \colon \mathscr{H}^2 \to \mathscr{H}^2$ is an isometry if and only if $\mathscr{T}^{-1} \circ \varphi_0$ is inner, where $\mathscr{T}$ denotes the Cayley transform \eqref{eq:cayley}.

\section{Examples} \label{sec:examples} 
In this final section we give three examples. Fix $\mre{c}-1/2\geq r > 0$ and set $\Omega = \mathbb{D}(c,r)$. The composition operators we consider will be generated by symbols $\varphi$ mapping $\mathbb{C}_0^\ast$ to $\Omega$ with $\varphi(+\infty)=c$. Accordingly, we let $\Theta(z)=c+rz$.

\subsection{} The first symbol we consider is
\[\varphi(s)=c+\frac{r}{2\sqrt{2}}\left(2^{-s}+2i\, 4^{-s}+8^{-s}\right),\]
which contains (powers of) only one prime number. Therefore the boundary value $\varphi^\ast$ is a function of only one variable $\chi_1 = e^{i\theta_1}$. On the basis of the calculation
\[\left|\Theta^{-1} \circ \varphi^\ast(\chi_1)\right|=\frac{1}{2\sqrt{2}}\left|e^{i\theta_1}+2i\, e^{2i\theta_1}+e^{3i\theta_1}\right|=\sqrt{\frac{\cos^2(\theta_1)+1}{2}}\]
we find that $\varphi$ maps $\mathbb{C}_0^\ast$ into $\mathbb{D}(c,r)$. Theorem~\ref{thm:inner} and Theorem~\ref{thm:mpq} thus yield that $\|\mathscr{C}_\varphi\|^2 < \zeta(1+\zeta)$, where, as always, $\xi = (\mre{c}-1/2)+ \sqrt{(\mre{c}-1/2)^2-r^2}$. 

We can use Lemma~\ref{lem:H2sharpiro} to give a better estimate. The calculation above also yields that if $1/\sqrt{2}\leq \delta \leq 1$, then
\[C_\delta = \frac{1}{2}\frac{1-\delta}{1+\delta} m_\infty(\mathscr{E}_\delta) = \frac{1}{2}\frac{1-\delta}{1+\delta}\left(1-\frac{2}{\pi}\arccos\left(\sqrt{2\delta^2-1}\,\right)\right).\]
A reasonable choice is $\delta=\sqrt{5/8}$. Hence we obtain that
\[\|\mathscr{C}_\varphi\|^2 \leq C \zeta(2\mre{c})+(1-C)\zeta(1+\xi), \qquad C = \frac{13-4\sqrt{10}}{18}=0.01949\ldots\]

\begin{figure}
	\begin{tikzpicture}
		[scale=0.725] 
		\begin{axis}
			[axis equal image, hide axis, ymin=-1.1, ymax=1.1, xmin=-1.1, xmax=1.1,]
			
			\addplot[domain=-200:200, samples = 4000, color=gray, thin] ({cos(((180)/pi)*ln(2)*x)*(3/4)+cos(((180)/pi)*ln(3)*x)*(1/4)},{sin(((180)/pi)*ln(2)*x)*(3/4)+sin(((180)/pi)*ln(3)*x)*(1/4)}); 
		\end{axis}
	\end{tikzpicture}
	\hspace{0.4cm} 
	\begin{tikzpicture}
		[scale=0.725] 
		\begin{axis}
			[axis equal image, hide axis, ymin=-1.1, ymax=1.1, xmin=-1.1, xmax=1.1,]
			
			\addplot[domain=-200:200, samples = 4000, color=gray, thin] ({cos(((180)/pi)*ln(2)*x)*(1/2)+cos(((180)/pi)*ln(3)*x)*(1/2)},{sin(((180)/pi)*ln(2)*x)*(1/2)+sin(((180)/pi)*ln(3)*x)*(1/2)}); 
		\end{axis}
	\end{tikzpicture}
	\hspace{0.4cm} 
	\begin{tikzpicture}
		[scale=0.725] 
		\begin{axis}
			[axis equal image, hide axis, ymin=-1.1, ymax=1.1, xmin=-1.1, xmax=1.1,]
			
			\addplot[domain=-200:200, samples = 4000, color=gray, thin] ({cos(((180)/pi)*ln(2)*x)*(2/3)+cos(((180)/pi)*ln(3)*x)*(1/6)+cos(((180)/pi)*ln(5)*x)*(1/6)},{sin(((180)/pi)*ln(2)*x)*(2/3)+sin(((180)/pi)*ln(3)*x)*(1/6)+sin(((180)/pi)*ln(5)*x)*(1/6)});
			
			\addplot[domain=0:360, samples = 100, color=gray, thin, densely dotted] ({cos(x)*(1/3)},{sin(x)*(1/3)}); 
		\end{axis}
	\end{tikzpicture}
	\caption{Plots of $\varphi_{\mathbf{c}}(it)$ for $-200 \leq t \leq 200$ with $\mathbf{c}$ equal to $\frac{r}{4}(3,1,0)$, $\frac{r}{2}(1,1,0)$ and $\frac{r}{6}(4,1,1)$. The inner radius is $\frac{r}{2}$, $0$ and $\frac{r}{3}$.} 
	\label{fig:annulus} 
\end{figure}
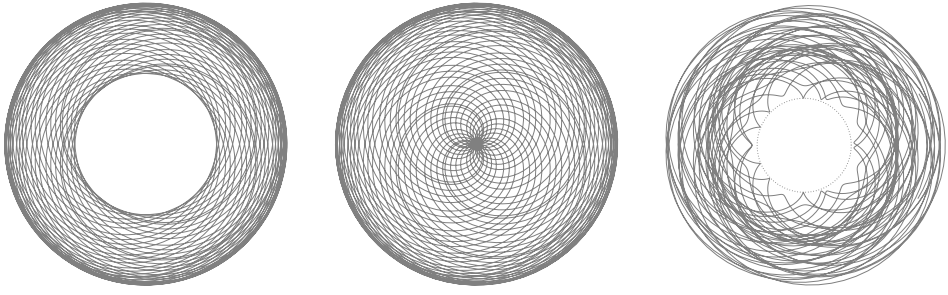

\subsection{} If there are $d\geq2$ primes in the symbol $\varphi$, it is generally much more difficult to estimate $m_\infty(\mathscr{E}_\delta)$. However, using the ergodic theorem, it is possible to express $m_\infty(\mathscr{E}_\delta)$ by looking only at the imaginary axis. By for example \cite[Thm.~2.1.12]{QQ13} we find that the equality
\[m_\infty(\mathscr{E}_\delta) = \lim_{T\to\infty} \frac{1}{2T}\operatorname{meas}\left(\left\{t \in [-T,T]\,:\, \left|\Theta^{-1}\circ \varphi_\chi(it)\right| < \delta \right\}\right)\]
is valid for almost every $\chi \in \mathbb{T}^\infty$. If $\Theta^{-1}\circ \varphi$ is a Dirichlet polynomial, then the equality actually holds for every $\chi \in \mathbb{T}^\infty$, in particular for $\chi\equiv1$. 

As an example, we will consider the following affine symbols: 
\begin{align*}
	\varphi_1(s) &= c + \tfrac{r}{4}(3\cdot 2^{-s}+3^{-s}), \\
	\varphi_2(s) &= c + \tfrac{r}{2}(2^{-s}+3^{-s}), \\
	\varphi_3(s) &= c + \tfrac{r}{6}(4\cdot 2^{-s}+3^{-s}+5^{-s}). 
\end{align*}
Recall from \cite[Sec.~XI.5]{Titchmarsh86} that if $\varphi(s)=c + \sum_{j\geq1} c_j p_j^{-s}$ with $c_1 \geq c_2 \geq \cdots \geq 0$, then the closure of the image of the imaginary axis is an annulus,
\[\overline{\varphi(i\mathbb{R})} = \left\{w \in \mathbb{C}\,:\, r_0 \leq |w-c| \leq r\right\},\qquad r = \sum_{j=1}^\infty c_j, \qquad r_0=\max(0,2c_1-r).\]

In Figure~\ref{fig:annulus} we plot $\varphi_j(it)$ for $j=1,2,3$ and $-200\leq t \leq 200$. Theorem~5 reveals that $\|\mathscr{C}_{\varphi_1} f \|_{\mathscr{H}^2} \geq \|\mathscr{C}_{\varphi_j} f\|_{\mathscr{H}^2}$ for $j=2,3$ and every $f \in \mathscr{H}^2$, but it does not yield any conclusion regarding the relationship between the composition operators generated by $\varphi_2$ and $\varphi_3$. Inspection of their plots near the outer radius might lead to the conjecture that $\mathscr{C}_{\varphi_3}$ is subordinate to $\mathscr{C}_{\varphi_2}$.

To verify this conjecture, we recall from the proof of Theorem~\ref{thm:affinesubord} that it is sufficient to prove that $\|\varphi_3 -c\|_{\mathscr{H}^{2k}}^{2k} \leq \|\varphi_2 -c \|_{\mathscr{H}^{2k}}^{2k}$ holds for $k=1,2,\ldots$. By the multinomial theorem and a computation, this set of inequalities is equivalent to the statement that
\[\sum_{j_1+j_2+j_3=k} \binom{k}{j_1,j_2,j_3}^2 16^{j_1} \leq 9^{k} \binom{2k}{k}, \qquad k = 1,2 \ldots.\]
This inequality can be checked by hand for small $k$. For large $k$, it can be proven by using Stirling's formula and the Laplace method, see for example \cite{RS09} for a calculation of the asymptotic behavior of a very similar sum. Note that the largest term of the sum occurs when $j_1 \approx 2k/3$ and $j_2 \approx j_3 \approx k/6$. We omit the details.

Hence we find that $\mathscr{C}_{\varphi_3}$ is indeed subordinate to $\mathscr{C}_{\varphi_2}$. It would be interesting to know if a more general subordination principle than Theorem~\ref{thm:affinesubord} holds, still operating in the family of affine composition operators with the same mapping properties. In particular, we pose the following question.

\begin{question}
	If $\varphi_{\mathbf{b}}$ and $\varphi_{\mathbf{c}}$ are two affine symbols with the same mapping properties, is it true that either $\mathscr{C}_{\varphi_{\mathbf{b}}}$ is subordinate to $\mathscr{C}_{\varphi_{\mathbf{c}}}$, or $\mathscr{C}_{\varphi_{\mathbf{c}}}$ is subordinate to $\mathscr{C}_{\varphi_{\mathbf{b}}}$?
\end{question}
By Theorem~\ref{thm:affinesubord} a counter-example would have to contain $d\geq3$ prime numbers.

\subsection{} The inner functions on $\mathbb{T}^2$ are already difficult to describe or classify, see for example \cite{BK13}. To give an example of a non-trivial inner function $g \in \mathscr{H}^2$, consider
\[g(s) = \exp \left( -\sum_{j=1}^\infty \lambda_j \frac{e^{i\theta_j} + p_j^{-s}}{e^{i\theta_j} - p_j^{-s}}\right),\]
where $(e^{i\theta_1}, e^{i\theta_2}, \ldots) \in \mathbb{T}^\infty$ and the sequence $(\lambda_j)_{j\geq1}$ is non-negative and summable. This Dirichlet series $g$ is inner, does not have any zeroes in $\mathbb{C}_0$, and fails to converge on the imaginary axis. The symbol
\[\varphi(s)= c + r\frac{g(s)-g(+\infty)}{1-\overline{g(+\infty)}g(s)},\]
is in $\mathscr{G}$ and maps $\mathbb{C}_0^\ast$ to $\mathbb{D}(c,r)$ with $\varphi(+\infty)=c$. If $\psi(s)=\Theta(2^{-s})=c+r2^{-s}$, then Theorem~\ref{thm:inner} yields that $\|\mathscr{C}_\varphi f\|_{\mathscr{H}^2}=\|\mathscr{C}_\psi f\|_{\mathscr{H}^2}$ for every $f \in \mathscr{H}^2$. 

\bibliographystyle{amsplain} 
\bibliography{circles} 

\providecommand{\bysame}{\leavevmode\hbox to3em{\hrulefill}\thinspace}
\providecommand{\MR}{\relax\ifhmode\unskip\space\fi MR }
% \MRhref is called by the amsart/book/proc definition of \MR.
\providecommand{\MRhref}[2]{%
  \href{http://www.ams.org/mathscinet-getitem?mr=#1}{#2}
}
\providecommand{\href}[2]{#2}
\begin{thebibliography}{10}

\bibitem{Bayart02}
Fr\'{e}d\'{e}ric Bayart, \emph{Hardy spaces of {D}irichlet series and their
  composition operators}, Monatsh. Math. \textbf{136} (2002), no.~3, 203--236.

\bibitem{Bayart03}
Fr\'ed\'eric Bayart, \emph{Compact composition operators on a {H}ilbert space
  of {D}irichlet series}, Illinois J. Math. \textbf{47} (2003), no.~3,
  725--743.

\bibitem{BBHOP19}
Fr\'{e}d\'{e}ric Bayart, Ole~Fredrik Brevig, Antti Haimi, Joaquim
  Ortega-Cerd\`a, and Karl-Mikael Perfekt, \emph{Contractive inequalities for
  {B}ergman spaces and multiplicative {H}ankel forms}, Trans. Amer. Math. Soc.
  \textbf{371} (2019), no.~1, 681--707.

\bibitem{BJ00}
Grahame Bennett and Graham Jameson, \emph{Monotonic averages of convex
  functions}, J. Math. Anal. Appl. \textbf{252} (2000), no.~1, 410--430.

\bibitem{BK13}
Kelly Bickel and Greg Knese, \emph{Inner functions on the bidisk and associated
  {H}ilbert spaces}, J. Funct. Anal. \textbf{265} (2013), no.~11, 2753--2790.

\bibitem{BR01}
Paul~S. Bourdon and Dylan~Q. Retsek, \emph{Reproducing kernels and norms of
  composition operators}, Acta Sci. Math. (Szeged) \textbf{67} (2001), no.~1-2,
  387--394.

\bibitem{Brevig17}
Ole~Fredrik Brevig, \emph{Sharp norm estimates for composition operators and
  {H}ilbert-type inequalities}, Bull. Lond. Math. Soc. \textbf{49} (2017),
  no.~6, 965--978.

\bibitem{FQV04}
Catherine Finet, Herv\'{e} Queff\'{e}lec, and Alexander Volberg,
  \emph{Compactness of composition operators on a {H}ilbert space of
  {D}irichlet series}, J. Funct. Anal. \textbf{211} (2004), no.~2, 271--287.

\bibitem{GH99}
Julia Gordon and H{\aa}kan Hedenmalm, \emph{The composition operators on the
  space of {D}irichlet series with square summable coefficients}, Michigan
  Math. J. \textbf{46} (1999), no.~2, 313--329.

\bibitem{HammondPhD}
Christopher Nathan~Brodsky Hammond, \emph{On the norm of a composition
  operator}, ProQuest LLC, Ann Arbor, MI, 2003, Thesis (Ph.D.)--University of
  Virginia.

\bibitem{Harper17}
Adam Harper, \emph{Moments of random multiplicative functions, {I}: {L}ow
  moments, better than squareroot cancellation, and critical multiplicative
  chaos}, arXiv:1703.06654.

\bibitem{Hedenmalm04}
H{\aa}kan Hedenmalm, \emph{Dirichlet series and functional analysis}, The
  legacy of {N}iels {H}enrik {A}bel, Springer, Berlin, 2004, pp.~673--684.

\bibitem{HLS97}
H{\aa}kan Hedenmalm, Peter Lindqvist, and Kristian Seip, \emph{A {H}ilbert
  space of {D}irichlet series and systems of dilated functions in
  {$L^2(0,1)$}}, Duke Math. J. \textbf{86} (1997), no.~1, 1--37.

\bibitem{MPQ18}
Perumal Muthukumar, Saminathan Ponnusamy, and Herv\'e Queff\'elec,
  \emph{Estimate for norm of a composition operator on the {H}ardy-{D}irichlet
  space}, Integral Equations Operator Theory \textbf{90} (2018), no.~1, Art.
  11, 12.

\bibitem{Nordgren68}
Eric~A. Nordgren, \emph{Composition operators}, Canad. J. Math. \textbf{20}
  (1968), 442--449.

\bibitem{OS12}
Jan-Fredrik Olsen and Eero Saksman, \emph{On the boundary behaviour of the
  {H}ardy spaces of {D}irichlet series and a frame bound estimate}, J. Reine
  Angew. Math. \textbf{663} (2012), 33--66.

\bibitem{PP18}
Karl-Mikael Perfekt and Alexander Pushnitski, \emph{On {H}elson matrices:
  moment problems, non-negativity, boundedness, and finite rank}, Proc. Lond.
  Math. Soc. (3) \textbf{116} (2018), no.~1, 101--134.

\bibitem{QQ13}
Herv\'{e} Queff\'{e}lec and Martine Queff\'{e}lec, \emph{Diophantine
  approximation and {D}irichlet series}, Harish-Chandra Research Institute
  Lecture Notes, vol.~2, Hindustan Book Agency, New Delhi, 2013.

\bibitem{QS15}
Herv\'{e} Queff\'{e}lec and Kristian Seip, \emph{Approximation numbers of
  composition operators on the {$H^2$} space of {D}irichlet series}, J. Funct.
  Anal. \textbf{268} (2015), no.~6, 1612--1648.

\bibitem{RS09}
L.~B. Richmond and Jeffrey Shallit, \emph{Counting abelian squares}, Electron.
  J. Combin. \textbf{16} (2009), no.~1, Research Paper 72, 9.

\bibitem{SS09}
Eero Saksman and Kristian Seip, \emph{Integral means and boundary limits of
  {D}irichlet series}, Bull. Lond. Math. Soc. \textbf{41} (2009), no.~3,
  411--422.

\bibitem{Shapiro00}
Joel~H. Shapiro, \emph{What do composition operators know about inner
  functions?}, Monatsh. Math. \textbf{130} (2000), no.~1, 57--70.

\bibitem{Titchmarsh86}
E.~C. Titchmarsh, \emph{The theory of the {R}iemann zeta-function}, second ed.,
  The Clarendon Press, Oxford University Press, New York, 1986, Edited and with
  a preface by D. R. Heath-Brown.

\end{thebibliography}
\end{document}